\DeclareMathAlphabet{\mathpzc}{OT1}{pzc}{m}{it}
\newcounter{todocounter}
\providecommand\@dotsep{5}
\renewcommand{\listoftodos}[1][\@todonotes@todolistname]{%
	\@starttoc{tdo}{#1}}
\crefname{table}{table}{tables}
\crefname{listing}{Program-code}{Program-codes}  
\Crefname{listing}{Program-code}{Program-codes}
\crefname{subsection}{subsection}{subsections}
\theoremstyle{plain}
\newtheorem{Thm}{Theorem}[section]
\newtheorem{Cor}[Thm]{Corollary}
\newtheorem{Prop}[Thm]{Proposition}
\newtheorem{Lem}[Thm]{Lemma}
\newtheorem{Que}{Question}
\theoremstyle{definition}
\newtheorem{Remark}[Thm]{Remark}
\numberwithin{equation}{section}
\newcommand{\Card}[1]{\left\vert #1\right\vert} 
\newcommand{\Hecke}{\mathcal{H}} 
\newcommand{\coset}[1]{\left[ #1 \right]}  
\newcommand{\gen}[1]{\left\langle #1 \right\rangle}  
\newcommand{\Ker}{\operatorname{Ker}}
\newcommand{\Image}{\operatorname{Im}}
\newcommand{\Ind}{i}
\newcommand{\Hom}{\operatorname{Hom}}
\newcommand{\SL}{\operatorname{SL}}
\newcommand{\GL}{\operatorname{GL}}
\newcommand{\G}{\mathbb{G}}
\newcommand{\C}{\mathbb C}
\newcommand{\F}{\mathbb{F}}
\newcommand{\Z}{\mathbb{Z}}
\newcommand{\R}{\mathbb{R}}
\newcommand{\N}{\mathbb{N}}
\newcommand{\M}{\mathcal{M}}
\newcommand{\bk}[1]{\left(#1\right)} 
\newcommand{\bm}{\begin{multline*}}
\newcommand{\tu}{\end  {multline*}}
\DeclareMathOperator{\unif}{\varpi} 
\renewcommand{\check}[1]{#1 ^{\vee}} 
\DeclareMathOperator{\Real}{Re} 
\newcommand{\piece}[1]{\left\{\begin{matrix} #1 \end{matrix}\right.} 
\newcommand{\set}[1]{\left\{ #1 \right\}} 
\newcommand{\mvert}{\mathrel{}\middle\vert\mathrel{}} 
\newcommand{\res}[1]{\Big\vert_{#1}}
\newcommand{\suml}{\sum\limits}
\newcommand{\rmod}{/}
\newcommand{\lmod}{\backslash}
\newcommand{\Stab}{\operatorname{Stab}}
\newcommand{\esevencharp}[7]{\renewcommand*{\arraystretch}{1} \begin{pmatrix}&& #2 && \\ #1 & #3 & #4 & #5 & #6 &#7 \end{pmatrix} }
\newcommand{\esevenchar}[7]{\renewcommand*{\arraystretch}{1} \begin{bmatrix}&& #2 && \\ #1 & #3 & #4 & #5 & #6 &#7 \end{bmatrix} }
\newcommand{\esevencharnontriv}[7]{\renewcommand*{\arraystretch}{1} \begin{bmatrix}&& #2 && \\ #1, & #3, & #4, & #5, & #6, &#7 \end{bmatrix} }
\newcommand{\dsixcharchar}[6]{\renewcommand*{\arraystretch}{1} \begin{bmatrix} &&& #6 & \\ #1 & #2 & #3 & #4 & #5 \end{bmatrix} }
\newcommand{\bfX}{\mathbf{X}}
\newcommand*{\rom}[1]{\expandafter\@slowromancap\romannumeral #1@}
\renewcommand*{\arraystretch}{1.5}
\def\imod#1{\allowbreak\mkern10mu({\operator@font mod}\,\,#1)}
\renewcommand\section{\@startsection{section}{1}{\z@}%
	{-3.5ex \@plus -1ex \@minus-.2ex}%
	{2.3ex \@plus.2ex}%
	{\center\normalfont\large\bfseries}}
\renewcommand\subsection{\@startsection{subsection}{2}{\z@}%
	{-3.5ex \@plus -1ex \@minus-.2ex}%
	{2.3ex \@plus.2ex}%
	{\normalfont\large\bfseries}}
\renewcommand\subsubsection{\@startsection{subsubsection}{3}{\z@}%
	{-3.5ex \@plus -1ex \@minus-.2ex}%
	{2.3ex \@plus.2ex}%
	{\normalfont\large\bfseries}}
\newtheorem*{rep@theorem}{\rep@title} \newcommand{\newreptheorem}[2]{%
	\newenvironment{rep#1}[1]{%
		\def\rep@title{\bf #2 \ref{##1} }%
		\begin{rep@theorem} }%
		{\end{rep@theorem} } }
\protected\def\ignorethis#1\endignorethis{}
\let\endignorethis\relax
\newcounter{NoTableEntry}
\renewcommand*{\theNoTableEntry}{NTE-\the\value{NoTableEntry}}
\newcommand*{\notableentry}{%
	\kern-\tabcolsep
	\stepcounter{NoTableEntry}%
	\vadjust pre{\zsavepos{\theNoTableEntry t}}
	\vadjust{\zsavepos{\theNoTableEntry b}}
	\zsavepos{\theNoTableEntry l}
	\raisebox{%
		\dimexpr\zposy{\theNoTableEntry b}sp
		-\zposy{\theNoTableEntry l}sp\relax
	}[0pt][0pt]{%
		\setlength{\unitlength}{1pt}%
		\edef\w{%
			\strip@pt\dimexpr\zposx{\theNoTableEntry r}sp%
			-\zposx{\theNoTableEntry l}sp\relax
		}%
		\edef\h{%
			\strip@pt\dimexpr\zposy{\theNoTableEntry t}sp%
			-\zposy{\theNoTableEntry b}sp\relax
		}%
		\ifdim\w pt=0pt 
		\else
		\begin{picture}(0,0)%
		\edef\x{%
			\noexpand\put(0,0){\noexpand\line(\w,\h){\w}}%
			\noexpand\put(0,\h){\noexpand\line(\w,-\h){\w}}%
		}\x
		\end{picture}%
		\fi
	}%
	\hspace{0pt plus 1filll}%
	\zsavepos{\theNoTableEntry r}
	\kern-\tabcolsep
}
\providecommand*{\cupdot}{%
	\mathbin{%
		\mathpalette\@cupdot{}%
	}%
}
\newcommand*{\@cupdot}[2]{%
	\ooalign{%
		$\m@th#1\cup$\cr
		\sbox0{$#1\cup$}%
		\dimen@=\ht0 %
		\sbox0{$\m@th#1\cdot$}%
		\advance\dimen@ by -\ht0 %
		\dimen@=.5\dimen@
		\hidewidth\raise\dimen@\box0\hidewidth
	}%
}
\providecommand*{\bigcupdot}{%
	\mathop{%
		\vphantom{\bigcup}%
		\mathpalette\@bigcupdot{}%
	}%
}
\newcommand*{\@bigcupdot}[2]{%
	\ooalign{%
		$\m@th#1\bigcup$\cr
		\sbox0{$#1\bigcup$}%
		\dimen@=\ht0 %
		\advance\dimen@ by -\dp0 %
		\sbox0{\scalebox{2}{$\m@th#1\cdot$}}%
		\advance\dimen@ by -\ht0 %
		\dimen@=.5\dimen@
		\hidewidth\raise\dimen@\box0\hidewidth
	}%
}
\newcommand{\fun}[1]{\bar{\omega}_{#1}}
\newcommand{\jac}[3]{r^{#1}_{#2}\bk{#3}}
\newcommand{\weyl}[1]{\mathit{W}_{#1}}
\newcommand{\w}{w}
\newcommand{\mult}[2]{mult\bk{#1,#2}}
\newcommand{\s}[1]{s_{#1}}
\newcommand{\para}[1]{#1}
\newcommand{\inner}[1]{\langle #1 \rangle}
\newcommand{\ind}[3]{i^{#2}_{#1}\bk{#3}}
\newcommand{\Span}{\operatorname{Span}}
\newcolumntype{H}{>{\setbox0=\hbox\bgroup}c<{\egroup}@{}}
\title[Degenerate Principal Series of $E_7$]{The Degenerate Principal Series Representations of Exceptional Groups of Type $E_7$ over $p$-adic Fields - \today} 
\author[Hezi Halawi and Avner Segal]{Hezi Halawi${^{1}}$ and Avner Segal${^{2}}$}
\address{${^1}$ School of Mathematics, Ben Gurion University of the Negev, POB 653, Be'er Sheva 84105, Israel}
\address{${^2}$ Department of Mathematics,  Bar-Ilan University , Ramat-Gan, 5290002 Israel  }
\email{halawi@post.bgu.ac.il, segalavner@gmail.com}
\numberwithin{equation}{section}
\subjclass[2010]{22E50, 20G41, 20G05}
\NewDocumentCommand{\ceil}{s O{} m}{%
	\IfBooleanTF{#1} 
	{\left\lceil#3\right\rceil} 
	{#2\lceil#3#2\rceil} 
}
\newcommand{\divides}{\Big \vert}
\newcommand{\RR}{\makecell{$red.^*$}}
\newcommand{\RI}{\notableentry}
\newcommand{\NRR}{\makecell{$red.$}}
\newcommand{\NRI}{\makecell{$irr.$}}
\begin{document}
\maketitle
\begin{abstract}
In this paper, we study the degenerate principal series of a split, simply-connected, simple p-adic group of type $E_7$. We determine the points of reducibility and the maximal semi-simple subrepresentation at each point.
\end{abstract}
\tableofcontents

\section{Introduction}

This paper is a continuation of our ongoing project which studies the degenerate principal series of exceptional groups of type $E_n$.
 More precisely, let $F$ be a non-Archimedean local field and let  $G$ denote the group   of $F$-points of a  split, simple, simply-connected group of type $E_7$. We  answer  the following question:
\begin{Que}\label{main question}
Let $P$ be a standard proper maximal parabolic subgroup with a  Levi subgroup  $M$. Given a one dimensional representation   $\sigma$ of $M$, is the normalized parabolic induction $\pi =\Ind_{M}^{G}(\sigma)$ reducible? What is the length of its maximal semi-simple quotient and what is the length of its maximal semi-simple subrepresentation?
\end{Que}

We mention here that  \Cref{main question} was already studied for various groups such as:
\begin{itemize}
\item
For the  general linear group  it was answered in a wider generality  in
\cite{MR584084,MR0425030}, and the answer of \Cref{main question} for the special linear group follows from \cite{MR620252} \cite{MR1141803}.
\item Symplectic groups in \cite{MR1134591}.
\item Orthogonal groups  in \cite{MR2017065,MR1346929}.
	\item Exceptional group of type $G_2$ in \cite{MR1480543}.
	\item Exceptional group of type $F_4$ in \cite{MR2778237}.
	\item Exceptional group of type $E_6$ in \cite{E6}.
\end{itemize}  

The main reason that such a study has not been preformed for split exceptional groups of type $E_n$ before is the size and complex structure of their Weyl groups. The computations  required for such a study cannot be preformed manually in a  reasonable amount of time. To overcome this problem, we harnessed a computer for this task. As in our previous paper, the calculation is implemented  using \emph{Sagemath} \cite{sagemath}.

Understanding the structure of degenerate principal series is important for the studying automorphic representations.
For example, it is conjectured that in the right complex half-plane, a degenerate Eisenstein series would has a pole if and only if the local degenerate principal series is reducible for almost all primes. Moreover, the residual representation at such a point is a sum of restricted tensor products of quotients of the local degenerate principal series.

This paper is arranged as follows:
\begin{itemize}
\item
\Cref{section :: notations} introduces the notations used in this paper.  
\item
\Cref{section:: algo}  outlines our method. 
\item
\Cref{section::results} introduces the group G and its structure, states our main theorem, \Cref{theroem::main}, and its proof.
\item
\Cref{App:knowndata} contains the information on representations of Levi subgroups, required for the proof of \Cref{theroem::main}.
\item
In \Cref{section:: iwahori hecke algebra} we recall parts of the theory of Iwahori-Hecke algebras and use them to complete the proof of \Cref{theroem::main}.

\end{itemize}
\paragraph{\textbf{Acknowledgments}:}
This research was supported by the Israel Science Foundation, grant number 421/17 (Segal).

\section{Preliminaries and Notations}
\label{section :: notations}

\subsection{Group Structure}

Let $F$ be a non-Archimedean local field and  $G$ be the group of $F$-points of an arbitrary split reductive group defined over $F$. 

Fix a maximal split torus $T$ and assume that $rank(G)=n$. We denote by $\Phi_{G}$ the set of roots of $G$ with respect to $T$. Fixing a Borel subgroup $\para{B} \supset T$ determines a partition of $\Phi_{G}$ into positive roots, $\Phi^{+}_{G}$,  and negative roots, $\Phi^{-}_{G}$. Let $\Delta_{G}$ denote the set of simple roots of $G$ relative to $\para{B}$. 

For $\alpha \in \Phi_{G}$,  $\check{\alpha}$ stands for the associated co-root ,  $\fun{\alpha}$ stands for the associated fundamental weight, and $\check{\fun{\alpha}}$ stands for the associated co-fundamental weight, namely, 
\[
\inner{\beta,\check{\fun{\alpha}}}=
\inner{\fun{\beta},\check{\alpha}} = \delta_{\alpha,\beta}  \quad \forall \alpha,\beta \in \Delta_{G},
\]
where $\delta_{\alpha,\beta}$ is the Kronecker delta function.

Set $W=\weyl{G}$ to be the Weyl group of $G$ with respect to $T$. It is known that 
\[W= \inner{\s{\alpha} \: : \: \alpha \in \Delta_{G}},\]
where $\s{\alpha}$ is a simple reflection associated to $\alpha \in \Delta_{G}$.

For  $\Theta \subset \Delta_{G}$ we let $\para{P}_{\Theta}$
denote the parabolic subgroup of $G$ given by 
\[\para{P}_{\Theta} = \inner{\para{B} , \s{\alpha} \: : \: \alpha \in \Theta}.\]
Such subgroups are called  standard parabolic subgroups. Obviously, $\para{B} \subseteq \para{P}_{\Theta}$.
Each standard parabolic subgroup admits a Levi decomposition  $\para{P}_{\Theta}= M_{\Theta} N_{\Theta}$, where the factor $M_{\Theta}$ denotes the Levi subgroup of $\para{P}_{\Theta}$, and $N_{\Theta}$ denotes its unipotent radical.
In this case, $\Delta_{M_{\Theta}} =\Theta$ ,   $\Phi_{M_{\Theta}}  =  \bk{\Span_{\Z}{\Theta}} \cap \Phi_{G}$ and $\Phi_{M_{\Theta}}^{+}  =  \bk{\Span_{\Z}{\Theta}} \cap \Phi_{G}^{+}$. The Weyl group of $M_{\Theta}$ is given by  $\weyl{M_{\Theta}} =  \inner{s_{\alpha} \: : \:  \alpha \in \Theta}$. 
In particular, we let $U$ denote the unipotent radical $N_{\emptyset}$ of $\para{B}$.

Given an enumeration of $\Delta_{G}= \{\alpha_1,\dots, \alpha_n\}$ we fix a notation for (proper) maximal parabolic and Levi subgroups.
For $1 \leq i \leq n$ we let 
$\para{P}_{i}= \para{P}_{\Delta_{G} \setminus \set{\alpha_i}}$ and $M_{i} =  M_{\Delta_{G} \setminus \set{\alpha_i}}$.

\subsection{Characters, W-action and Stabilizers}

Let $M$ be a Levi subgroup of $G$. The complex manifold of (quasi) characters of $M$ is denoted by $\bfX(M)$ and its structure is described in \cite[Section 2]{E6}.     

We say that  $\chi \in \bfX(M)$ is of \textit{finite order} if there exists $k  \in \N$ such that  $\chi^{k} =1$;  the smallest such  $k$ is called the order of $\chi$ and is denoted by $ord(\chi)=k$.

By \cite[Section 2]{E6},
every $\Omega \in \bfX(M)$ has the form
\begin{equation}\label{formula :: char_levi}
\Omega =\Omega_{G}+ \sum_{\alpha  \in \Delta_{G} \setminus  \Delta_{M}} \Omega_{\alpha} \circ \fun{\alpha},
\end{equation}
where $\Omega_{\alpha} \in \bfX(F^\times)$ and 
$\Omega_G$ is the pull-back of a character in $\bfX(G\rmod \coset{G,G})$.

Any $\Omega\in \bfX(F^\times)$ can be written by
\begin{equation} \label{formula :: char_f_times}
\Omega = s+\chi,
\end{equation}
where $\chi\in\bfX(F^\times)$ is of finite order and $s\in\C$ should be interpreted as the character $s(x)=|x|^s$.
We write $\Real(\Omega),\Image(\Omega)\in\bfX(F^\times)$ for the elements given by
\[
\Real(\Omega)(x) = |x|^{\Real(s)},\quad 
\Image(\Omega)(x) = \chi(x) |x|^{\Image(s)} .
\]
Similarly, for $\Omega \in \bfX(M)$ given as in \Cref{formula :: char_levi}, we write
\[
\begin{array}{l}
\Real(\Omega) =\Real(\Omega_{G})+ \sum_{\alpha  \in \Delta_{G} \setminus  \Delta_{M}} \Real(\Omega_{\alpha}) \circ \fun{\alpha}, \\
\Image(\Omega) =\Image(\Omega_{G})+ \sum_{\alpha  \in \Delta_{G} \setminus  \Delta_{M}} \Image(\Omega_{\alpha}) \circ \fun{\alpha}.
\end{array}
\]

\vspace{0.2cm}

In particular, if $G$ is simple, then, by \Cref{formula :: char_levi} and \Cref{formula :: char_f_times}, any complex character of $M_{i}$ is of the form
\begin{equation}
\Omega_{\para{M}_i,s,\chi}^{G}  =(s+ \chi) \circ \fun{\alpha_i},
\label{formula:: character_form}
\end{equation} 
with $s\in \C$ and $\chi\in\bfX(F^\times)$ is of finite order.
We write  $\Omega_{M_i,s} = \Omega_{M_i,s, Triv}^{G}$, where $Triv$ stands for the trivial character of $M_i$.  
When there is no risk of confusion, we omit the subscript $M_i$ and the superscript $G$ and write $\Omega_{s,\chi}$ for simplicity.

We note that, since $\weyl{G}$ acts on $T$, it also acts on $\bfX(T)$.
The set
\[
\set{\lambda\in \bfX(T) \mvert \Real(\inner{\lambda,\check{\alpha}})\leq 0 \quad \forall \alpha  \in \Delta_{G}}
\]
is a fundamental domain in $\bfX(T)$ for the action of $W$.
An element $\lambda \in \bfX(T)$ is called \textbf{anti-dominant} if  $\Real \bk{\inner{\lambda,\check{\alpha}}} \leq 0 $ for every $\alpha  \in \Delta_{G}$.  
For every $\lambda \in \bfX(T)$, the Weyl-orbit of $\lambda$ contains an anti-dominant element (possibly more than one).
We also note that the orbit $\weyl{G}\cdot\lambda$ is finite.

By definition, for every $\lambda \in \bfX(T)$, one has
\[\Stab_{\weyl{G}}(\lambda) = \Stab_{\weyl{G}}\bk{\Real(\lambda)} \cap \Stab_{\weyl{G}}\bk{\Image(\lambda)}.\]

We note that the stabilizers of the elements in the orbit $\weyl{G} \cdot 
\lambda$ are conjugate, and hence it is enough to determine one of them in order to determine all of them.
For computational reasons, it is easier to calculate the stabilizer of an anti-dominant element $\lambda_{a.d} \in \weyl{G}\cdot\lambda$.

Choose an anti-dominant element $\lambda_{a.d} \in \bfX(T)$ in the $\weyl{G}$-orbit of $\lambda$.
In that case, one has 
\[\Stab_{\weyl{G}}(\Real(\lambda_{a.d})) = \inner{\s{\alpha} \; : \:  \inner{\Real(\lambda_{a.d}), \check {\alpha}}=0, \quad \alpha \in \Delta_{G}}\]
and
\begin{equation}
\label{formula :: stabilizer}
\Stab_{\weyl{G}}(\lambda_{a.d}) = \set{w\in \Stab_{\weyl{G}}(\Real(\lambda_{a.d}))  \mvert w\cdot \Image(\lambda_{a.d}) = \Image(\lambda_{a.d})} .
\end{equation}

\subsection{Representations}

Let  $\operatorname{Rep}(G)$ denote the category of admissible representations of $G$.
We denote by \linebreak$\Ind_{M}^{G} :  \operatorname{Rep}(M) \rightarrow \operatorname{Rep}(G)$  the functor of normalized induction from $M$ to $G$, and by $r^{G}_{M} : \operatorname{Rep}(G) \rightarrow \operatorname{Rep}(M)$ the functor of  normalized Jacquet functor.
The Jacquet functor $r_{M}^{G}$ is left-adjoint to $\Ind_{M}^{G}$,
that is, Frobenius reciprocity holds
\begin{equation} 
\Hom_G\bk{\pi,\ind{M}{G}{\tau}} \cong \Hom_M\bk{\jac{G}{M}{\pi},\tau} . \label{formula::Frobenius}
\end{equation}

In parts of this work, it is convenient to consider representations of finite length of a group $H$ as elements in the Grothendieck ring $\mathfrak{R}\bk{H}$ of $H$. Given a representation $\pi$ of $H$, we write $[\pi]$ for its class in $\mathfrak{R}\bk{H}$.
In particular, we write $[\pi]=[\pi_1]+[\pi_2]$ if for any irreducible representation $\sigma$ of $H$ one has
\[
mult\bk{\sigma,\pi} = mult\bk{\sigma,\pi_1} + mult\bk{\sigma,\pi_2}.
\]
Here, $mult\bk{\sigma,\pi}$ denotes the multiplicity of $\sigma$ in the Jordan-H\"older series of $\pi$.
Furthermore, we write $\pi\leq \pi'$ if, for any irreducible representation $\sigma$ of $H$, 
\[
\mult{\sigma}{\pi} \leq \mult{\sigma}{\pi'}.
\]

We quote  \cite[Lemma. 2.12]{MR0579172} , \cite[Theorem 6.3.6]{Casselman} which gives another property of $r^{G}_{M}$ and $\Ind^{G}_{M}$.
\begin{Lem}[Geometric Lemma]\label{Lemma::geomtric_lema} 
	For Levi subgroups $L$ and $M$ of $G$, let
	\[
	W^{M,L} = \set{w\in W\mvert w\bk{\Phi^+_M}\subset\Phi^+,\ w^{-1}\bk{\Phi^+_L}\subset\Phi^{+}}
	\]
	be the set of shortest representatives in $W$ of $W_L\lmod W\rmod W_M$.
	For a smooth representation $\Omega$ of $M$, the representation $r_L^G \Ind_M^G \Omega$, as an element of $\mathfrak{R}\bk{L}$, is given by:
	\begin{equation}
	\coset{r_L^G \Ind_M^G \Omega} = \sum_{w\in W^{M,L}} \coset{\Ind_ {L'}^{L}{ w \circ \jac{M}{M^{'}}{\Omega}}} ,
	\end{equation}
	where, for $w\in W^{M,L}$, 
	\[
	\begin{split}
	& M'=M\cap w^{-1}Lw  \\
	& L'=wM w^{-1}\cap L .
	\end{split}\]
\end{Lem}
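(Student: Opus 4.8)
The plan is to establish this the way it is done in \cite{MR0579172,Casselman}, via the Bruhat stratification of the generalized flag variety. Set $P=P_M=MN$ and $Q=P_L=LV$, so that $r_L^G\Ind_M^G$ is a functor $\operatorname{Rep}(M)\to\operatorname{Rep}(L)$. First I would realize $\Ind_M^G\Omega$ as the space of smooth sections of the $G$-equivariant sheaf $\mathcal F_\Omega$ on the compact $\ell$-space $X=P\backslash G$ attached to $\Omega$, and restrict the $G$-action to $Q$. The Bruhat decomposition gives a partition $X=\bigsqcup_{w\in W^{M,L}}\mathcal O_w$ into finitely many $Q$-orbits, where $\mathcal O_w=P\backslash PwQ\cong H_w\backslash Q$ with $H_w=Q\cap w^{-1}Pw$ (the stabiliser of $Pw$). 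Enumerating the $w$'s so that each $Z_j=\mathcal O_{w_1}\cup\cdots\cup\mathcal O_{w_j}$ is open in $X$, and using the exact sequence $0\to\mathcal S_c(U,\mathcal F_\Omega)\to\mathcal S(X,\mathcal F_\Omega)\to\mathcal S(X\setminus U,\mathcal F_\Omega)\to0$ for an open $U\subseteq X$ (valid for sheaves on $\ell$-spaces, $\mathcal S$ denoting smooth sections and $\mathcal S_c$ those of compact support), one obtains a finite $Q$-stable filtration of $\Res_Q\Ind_M^G\Omega$ whose $w$-th subquotient is $\mathcal S_c(\mathcal O_w,\mathcal F_\Omega|_{\mathcal O_w})$. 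Transporting structure along $w$ and unwinding the $\delta^{1/2}$-twist built into normalized induction, this subquotient is the compactly induced representation $\operatorname{ind}_{H_w}^{Q}(\sigma_w)$, where $\sigma_w$ is obtained from $\Omega$ by pullback along the homomorphism $H_w\to M$ sending $h$ to the $M$-component of $whw^{-1}$, up to a modulus twist.

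Next I would apply the normalized Jacquet functor $r_L^G=\delta_Q^{-1/2}\otimes(\,\cdot\,)_V$; it is exact, so it carries the filtration above to a filtration of $r_L^G\Ind_M^G\Omega$ by $L$-representations. The heart of the proof is the per-orbit computation: for each $w\in W^{M,L}$ one must identify the normalized $V$-coinvariants of $\operatorname{ind}_{H_w}^{Q}(\sigma_w)$. The relevant structural input is that, because $w$ is the shortest element of $W_L\,w\,W_M$, the subgroups $M'=M\cap w^{-1}Lw$ and $L'=wMw^{-1}\cap L=wM'w^{-1}$ are \emph{standard} Levi subgroups of $M$ and $L$; that the image of $H_w$ under $Q\twoheadrightarrow Q/V=L$ is exactly the standard parabolic $Q'$ of $L$ with Levi factor $L'$; and that, correspondingly, the reductive data of $H_w$ is carried through $M$ onto the standard parabolic $P'$ of $M$ with Levi $M'$. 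A Mackey/Fubini manipulation over $V$, keeping careful track of the four modulus characters $\delta_P,\delta_Q,\delta_{P'},\delta_{Q'}$, then shows that the $w$-th subquotient of $r_L^G\Ind_M^G\Omega$ is, after normalization, isomorphic to $\Ind_{Q'}^{L}\bigl(w\circ r_{M'}^{M}\Omega\bigr)$ — in particular a \emph{full} normalized induction, since $Q'$ is parabolic and hence $Q'\backslash L$ is compact.

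Finally, passing to the Grothendieck ring $\mathfrak R(L)$, the filtration collapses to the asserted identity $\coset{r_L^G\Ind_M^G\Omega}=\sum_{w\in W^{M,L}}\coset{\Ind_{L'}^{L}\bigl(w\circ r_{M'}^{M}\Omega\bigr)}$; working in $\mathfrak R(L)$ conveniently removes any need to control the extensions between the graded pieces. The step I expect to be the main obstacle is exactly the per-orbit computation — establishing the Levi decompositions above and verifying that all the normalization factors conspire correctly — since this is precisely where the defining inequalities of $W^{M,L}$ and the Bruhat/Iwasawa combinatorics of $G$ genuinely enter; in the present paper this is of course subsumed in the cited references \cite{MR0579172,Casselman}, which we simply invoke.
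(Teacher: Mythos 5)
The paper gives no proof of this lemma; it is quoted directly from Bernstein--Zelevinsky and Casselman. Your sketch correctly reproduces the standard argument from those references (Bruhat stratification of $P\backslash G$ into $Q$-orbits, the resulting filtration of $\operatorname{Res}_Q\Ind_M^G\Omega$, exactness of the Jacquet functor, and the per-orbit Mackey computation), so it is essentially the same approach as the one the paper invokes by citation.
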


We note that, since the Jacquet functor takes finite length representations to finite length representations, then for any Levi subgroup $M$ of a maximal parabolic subgroup and $\Omega\in\bfX(M)$, the $T$-module $\jac{G}{T}{\Ind_{M}^{G}(\Omega)}$,  considered as an element of $\mathfrak{R}(T)$, is a finite sum of one-dimensional representations of $T$. Each such representation of $T$ is called an \textbf{exponent} of $\Ind_{M}^{G}(\Omega)$.
Moreover,  
\begin{eqnarray}
\jac{M}{T}{\Omega} = \Omega\res{T}  - \rho_{M} ,  \label{formula::leadingexp}
\end{eqnarray}
where $\rho_{M} = |\cdot | \circ \bk{\frac{1}{2}\sum_{\gamma \in \Phi^{+}_{M}} \gamma}$.
The exponent $\lambda_0 =\jac{M}{T}{\Omega}$ is called the \textbf{leading exponent} of
$\Ind_{M}^{G}(\Omega)$.
We note that all exponents of $\Ind_{M}^{G}(\Omega)$ lie in the $\weyl{G}$-orbit of $\lambda_0$.

\subsection{Intertwining Operators} \label{subsec :: intertwining_operators}

Let $\bfX^{un}(T)$ denote the group of characters of $T$ of the form
\[
\lambda = \sum_{i=1}^n s_i\circ\fun{\alpha_i}, \quad \bk{s_1,...,s_n}\in\C^n .
\]
Given  $w \in \weyl{G}$ we let 
\[R(w) = \set{\alpha > 0 \: : \: w\alpha <0}.\]

We fix $\w \in \weyl{G}$.
For $\lambda\in \bfX^{un}(T)$ such that $\inner{\lambda,\check{\alpha}}>0$ for every $\alpha \in R(w)$, the integral
\begin{equation}\label{eq::intertwing}
\M_{w}(\lambda)f (g) =  \int_{U \cap w U w^{-1} \backslash U } f(\w^{-1}u g) du
\end{equation} 
converges for every $f\in \Ind_{T}^{G}(\lambda)$
and satisfies the following properties:

\begin{enumerate}[ref= (P\arabic{*}),label= (P\arabic{*})]
	\item \label{properties::intertwining::1}
	$\M_{w}(\lambda)$ admits a meromorphic continuation to all $\bfX^{un}(T)$ and defines an intertwining operator $M_w(\lambda):\Ind_{T}^{G}(\lambda)\to \Ind_{T}^{G}(w\cdot\lambda)$.
	\item \label{properties::intertwining::2}
	If $w =w_1w_2$ such that $l(w)=l(w_1)+l(w_2)$, then 
	$\M_{w}(\lambda) =  \M_{w_1}(w_2 \cdot \lambda) \circ \M_{w_2}(\lambda)$.
	\item \label{properties::intertwining::3}
	Suppose that $\inner{\lambda,\check{\alpha}}>0$ for some $\alpha \in \Delta_{G}$. Then, $\ker \M_{\s{\alpha}}(\lambda) \neq 0$ if and only if $\inner{\lambda,\check{\alpha}}=1$. 
\end{enumerate}

It is customary to use the normalized intertwining operator 
\[N_{w}(\lambda)=\prod_{\gamma \in R(w)} \frac{\zeta(\inner{\lambda,\check{\gamma}}+1)}{\zeta(\inner{\lambda,\check{\gamma}})}\M_{\w}(\chi),\]
where $\zeta(z) =  \frac{1}{1-q^{-z}}$ for $z \in \C \setminus\{0\}$.
The normalized intertwining operator $N_{w}(\lambda)$ satisfies the same properties as $\M_{w}(\lambda)$, while \ref{properties::intertwining::2} holds in an even wider generality, namely, 
\[N_{w_1w_2}(\lambda)= N_{w_1}(w_2 \cdot \lambda) \circ N_{w_2}(\lambda) \quad \forall \quad  w_1,w_2 \in \weyl{G}.\]

Set $z=\inner{\chi,\check{\alpha}}$ and assume that $\Real(z)>0$.
Then, by \cite[Section 6]{MR517138}, the operator $N_{\s{\alpha}}(\lambda)$ is holomorphic at $\lambda$.

\section{The Algorithm}\label{section:: algo}

In this section, we survey  the method used in this paper to determine the reducibility of degenerate principal series and their maximal semi-simple subrepresentation and quotient. These ideas go back to  works of Bernstein, Zelevenisky, Sally, Tadi\'c, Mu\'ic, Jantzen, Casselman, Iwahori and  others. For more information one should consult \cite[Section 3]{E6}. We fix a maximal parabolic subgroup $\para{P}= MN$ of a simple group $G$.
Let $\Omega = \Omega_{M,s,\chi}$ be as in \eqref{formula:: character_form} and $\pi = \Ind_{M}^{G}(\Omega)$.

We recall, from \cite[Subsection 3.1]{E6}, that if $\pi$ is reducible, then $|x|^{\Image(s)}$ is of finite order.
Hence, we assume that $s\in\R$.

We start by addressing the reducibility of $\pi$.
For this purpose we make a distinction between regular and non-regular cases.

The representation $\pi$ is called \textbf{regular}  if  $\Stab_{\weyl{G}}(\lambda) = \set{e}$ for any (and hence all) $\lambda \leq \jac{G}{T}{\pi}$.

We point out that the structure of $\pi$ in the regular case is completely determined by \cite[Theorem. 3.1.2]{MR1134591}, while in the non-regular case there currently is no such general result regarding its reducibility and structure.
In \Cref{subsection :: nonreg_red}, \Cref{subsection :: nonreg} and \Cref{subsection::length}, we outline tools which will allow us to solve \Cref{main question} in most cases.
The remaining cases are dealt with in \Cref{section :: result E7 }.

\subsection{Regular Cases}
We recall \cite[Theorem. 3.1.2]{MR1134591}.
The setting of this theorem is as follows.
Let $\pi=\Ind_{M}^{G}(\Omega)$ be a regular representation.
For any $\alpha_i \in \Delta_{G}$, let $L_{\alpha_i}$ be the Levi subgroup of the standard parabolic subgroup 
$Q_{\alpha_i} = \inner{\para{B},\s{\alpha_i}}$
and let
 $BZ_{i}(\pi)$ stand for the set of  representations 
$\Ind_ {L^{'}}^{L_{\alpha_i}}(w \circ \tau)$, where
$\w$ runs over all $\w \in W^{M,L_{\alpha_i}}$ and for each $\w \in W^{M,L_{\alpha_i}}$: 
\begin{enumerate}
\item
$L^{'}= wM w^{-1} \cap  L_{\alpha_i}$.
\item
$\tau$ is a component of $\jac{M}{M^{'}}{\Omega}$,
where $M^{'}= M \cap w^{-1}L_{\alpha_i}w$.
\end{enumerate}
 Under these assumptions one has:
\begin{Thm} \label{Thm::regularcase red}
The following are equivalent:
\begin{enumerate}[label=(REG\arabic*), ref = (REG\arabic*)]
\item\label{Thm::regularcase red::item1}
$\pi$ is irreducible.
\item\label{Thm::regularcase red::item2}
For every $i$ and for every  $ \sigma  \in BZ_{i}(\pi),$ $\sigma$ is irreducible.
\item\label{Thm::regularcase red::item3}
$\inner{\jac{M}{T}{\Omega},\check{\alpha}} \not \in \{\pm1\}$ for every $\alpha \in \Phi_{G}^{+} \setminus \Phi_{M}$.
\end{enumerate}
\end{Thm}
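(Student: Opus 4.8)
The plan is to prove the cycle of implications $\ref{Thm::regularcase red::item1}\Rightarrow\ref{Thm::regularcase red::item2}\Rightarrow\ref{Thm::regularcase red::item3}\Rightarrow\ref{Thm::regularcase red::item1}$, with the bulk of the work being a direct appeal to the cited result \cite[Theorem 3.1.2]{MR1134591}. First I would set up notation: since $\pi=\Ind_M^G(\Omega)$ is assumed regular, the $T$-exponents of $\pi$ form a single free $W$-orbit, so each exponent occurs with multiplicity one in $\jac{G}{T}{\pi}$; fix the leading exponent $\lambda_0=\jac{M}{T}{\Omega}$ as in \eqref{formula::leadingexp}. The key structural input is that in the regular case the lattice of subrepresentations of $\pi$ is governed entirely by the rank-one data: for each simple $\alpha_i$, the Jacquet module $\jac{L_{\alpha_i}}{T}{\cdot}$ applied to $\pi$ decomposes via the Geometric Lemma (\Cref{Lemma::geomtric_lema}) into the pieces $\Ind_{L'}^{L_{\alpha_i}}(w\circ\jac{M}{M'}{\Omega})$ over $w\in W^{M,L_{\alpha_i}}$, and these are exactly the members of $BZ_i(\pi)$.

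For $\ref{Thm::regularcase red::item1}\Rightarrow\ref{Thm::regularcase red::item2}$: if some $\sigma\in BZ_i(\pi)$ were reducible, then since $\sigma$ is (a summand of) $\jac{L_{\alpha_i}}{T}{\pi}$ pushed up to $L_{\alpha_i}$, one produces a proper nonzero subrepresentation of $\pi$ by transitivity of induction $\Ind_{L_{\alpha_i}}^G\circ\Ind_{L'}^{L_{\alpha_i}}=\Ind_{L'}^G$ together with the fact that the corresponding exponents appear with multiplicity one, so a proper submodule on the rank-one level cannot be "absorbed" — this contradicts irreducibility of $\pi$. For $\ref{Thm::regularcase red::item2}\Leftrightarrow\ref{Thm::regularcase red::item3}$: a rank-one induced representation $\Ind_{L'}^{L_{\alpha_i}}(w\circ\tau)$ of the $SL_2$- or $GL_2$-type Levi $L_{\alpha_i}$ is reducible precisely when the relevant character pairs to $\pm1$ against $\check{\alpha_i}$; translating through the $W$-action, $\inner{w\circ\jac{M}{M'}{\Omega},\check{\alpha_i}}=\inner{\jac{M}{T}{\Omega},w^{-1}\check{\alpha_i}}$, and as $i$ and $w\in W^{M,L_{\alpha_i}}$ range over all possibilities the elements $w^{-1}\alpha_i$ range exactly over $\Phi_G^+\setminus\Phi_M$ (each positive root not in $M$ is $w^{-1}\alpha_i$ for a unique such pair), giving the stated condition. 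Finally $\ref{Thm::regularcase red::item3}\Rightarrow\ref{Thm::regularcase red::item1}$ is the substance of \cite[Theorem 3.1.2]{MR1134591}: when no coroot pairing hits $\pm1$, every rank-one intertwining operator $N_{s_{\alpha_i}}$ is an isomorphism on the relevant pieces (using property \ref{properties::intertwining::3} and the holomorphy statement quoted from \cite[Section 6]{MR517138}), hence the long intertwining operator $N_{w_0^M}(\lambda_0)$ realizing $\pi\cong\Ind_M^G(\text{opposite})$ is an isomorphism, forcing irreducibility by the regularity (multiplicity-one) of the exponents.

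The main obstacle I anticipate is the bookkeeping in $\ref{Thm::regularcase red::item2}\Leftrightarrow\ref{Thm::regularcase red::item3}$: one must verify carefully that the double-coset representatives $W^{M,L_{\alpha_i}}$ and the components $\tau$ of $\jac{M}{M'}{\Omega}$ really do produce, via $w^{-1}\alpha_i$, each root of $\Phi_G^+\setminus\Phi_M$ exactly once, and that the reducibility of each rank-one piece is controlled by a single coroot pairing rather than some interaction between pieces — this is where regularity (freeness of the $W$-orbit of exponents) is essential, since it prevents cancellation or "accidental" irreducibility coming from coincidences among exponents. Since the statement is quoted verbatim from \cite{MR1134591}, in the write-up I would keep this short, indicate the translation between their hypotheses and ours (our $\Omega$ is a one-dimensional character, so $\jac{M}{M'}{\Omega}$ is again one-dimensional and its unique component is itself), and refer to loc. cit. for the detailed rank-one analysis.
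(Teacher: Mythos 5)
Your proposal matches the paper's treatment: the paper gives no independent proof but cites \cite[Theorem 3.1.2]{MR1134591} for the equivalence of \ref{Thm::regularcase red::item1} and \ref{Thm::regularcase red::item2} and \cite[Subsection 3.1]{E6} for the equivalence of \ref{Thm::regularcase red::item2} and \ref{Thm::regularcase red::item3}, which is exactly the reduction you carry out (deferring the rank-one analysis to Jantzen and doing the $w^{-1}\alpha_i \leftrightarrow \Phi_G^+\setminus\Phi_M$ bookkeeping yourself). The only cosmetic difference is the division of labor: the paper credits the whole equivalence \ref{Thm::regularcase red::item1}$\Leftrightarrow$\ref{Thm::regularcase red::item2} to Jantzen, whereas you sketch \ref{Thm::regularcase red::item1}$\Rightarrow$\ref{Thm::regularcase red::item2} yourself and invoke Jantzen for \ref{Thm::regularcase red::item3}$\Rightarrow$\ref{Thm::regularcase red::item1}; the substance is the same.
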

The equivalence of \ref{Thm::regularcase red::item1} and \ref{Thm::regularcase red::item2} is the content of \cite[Theorem. 3.1.2]{MR1134591}, while the equivalence of \ref{Thm::regularcase red::item2} and \ref{Thm::regularcase red::item3} is explained in \cite[Subsection 3.1]{E6}.

Thus, the regular reducible cases are determined by two linear conditions, \Cref{formula :: stabilizer} and \ref{Thm::regularcase red::item3}, and hence is simple to implement.
We conclude the discussion on the regular case by recalling from \cite[Subsection 3.1]{E6} that there are only finitely many non-regular $\Omega\in\bfX(M)$ and only finitely many regular $\Omega\in\bfX(M)$ such that $\Ind_M^G\Omega$ is reducible.

\vspace{0.4cm} 
In the remainder of this section, we assume that $\pi$ is non-regular.

\subsection{Non Regular Cases - Reducibility Test} \label{subsection :: nonreg_red}

To deal with the reducibility of non-regular representations, we quote the following reducibility criterion of Tadi\'c \cite[Lemma 3.1]{MR1658535},
\begin{Lem}[\textbf{RC}]\label{Tadic::irr} Let $\pi$ be a smooth representation of $G$ of finite-length.
Suppose that there is a Levi subgroup $L$ of $G$ and 
 there are smooth representations $\sigma$ and $\Pi$ of $G$ of finite-length such that 
\begin{enumerate}
\item
$\sigma\leq \Pi, \pi \leq \Pi$.
\item
$\jac{G}{L}{\pi} + \jac{G}{L}{\sigma} \not \leq \jac{G}{L}{\Pi}$. 
\item
$\jac{G}{L}{\pi} \not \leq \jac{G}{L}{\sigma}$.
\end{enumerate}
Then $\pi$ is reducible. Moreover, $\pi$ and $\sigma$ share a common irreducible subquotient.
\end{Lem}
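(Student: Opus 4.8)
The statement is Tadi\'c's reducibility criterion (\textbf{RC}), so the natural strategy is to argue by contradiction using the exactness and additivity of the Jacquet functor $\jac{G}{L}{\cdot}$ on the Grothendieck ring $\mathfrak{R}(L)$. Suppose, for contradiction, that $\pi$ is irreducible. Since $\pi\leq\Pi$ by hypothesis (1), $\pi$ is then an irreducible subquotient of $\Pi$; write $[\Pi]=[\pi]+[\Pi']$ in $\mathfrak{R}(G)$ for some finite-length $\Pi'$ with $\Pi'\geq 0$. Likewise $[\sigma]\leq[\Pi]$. The plan is to show that $\sigma$ cannot share an irreducible subquotient with $\pi$, and then derive a contradiction with hypothesis (2) by a counting argument in $\mathfrak{R}(L)$.

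First I would establish the ``moreover'' clause as a lemma, since the reducibility of $\pi$ will follow from it: if $\pi$ is irreducible and $\pi,\sigma$ share no common irreducible subquotient, then $[\pi]+[\sigma]\leq[\Pi]$ in $\mathfrak{R}(G)$ (because $\pi$ occurs in $\Pi$ with multiplicity at least one, $\sigma$'s constituents are disjoint from $\pi$ and all occur in $\Pi$, so their multiplicities add without exceeding those in $\Pi$). Applying the additive, order-preserving map $\jac{G}{L}{\cdot}$ — which is exact, hence passes to $\mathfrak{R}$ and respects $\leq$ — yields $\jac{G}{L}{\pi}+\jac{G}{L}{\sigma}\leq\jac{G}{L}{\Pi}$, directly contradicting hypothesis (2). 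Therefore $\pi$ is reducible, \emph{or} $\pi$ and $\sigma$ share a common irreducible subquotient; but if $\pi$ is reducible we still must produce the common subquotient, so the cleaner route is: if $\pi$ is irreducible, the above forces a shared subquotient with $\sigma$ (else contradiction), and if $\pi$ is reducible we argue separately.

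To handle the reducible case and to nail down the common subquotient uniformly, I would instead organize the proof as follows. Assume $\pi$ and $\sigma$ have \emph{no} common irreducible subquotient, and derive that $\pi$ is irreducible (this is the contrapositive-friendly form). Under the no-common-subquotient assumption, for any irreducible subrepresentation $\pi_0\leq\pi$, $\pi_0$ is not a subquotient of $\sigma$; using $\pi_0\leq\pi\leq\Pi$ and $\sigma\leq\Pi$ one gets $[\pi_0]+[\sigma]\leq[\Pi]$, hence $\jac{G}{L}{\pi_0}+\jac{G}{L}{\sigma}\leq\jac{G}{L}{\Pi}$. Combined with hypothesis (2), namely $\jac{G}{L}{\pi}+\jac{G}{L}{\sigma}\not\leq\jac{G}{L}{\Pi}$, this gives $\jac{G}{L}{\pi_0}\lneq\jac{G}{L}{\pi}$ strictly, i.e. $\pi_0\neq\pi$ is impossible to rule out yet — so I would pick $\pi_0$ a full irreducible sub and then also invoke hypothesis (3): since $\jac{G}{L}{\pi}\not\leq\jac{G}{L}{\sigma}$, there is an exponent (a constituent in $\mathfrak{R}(L)$) of $\jac{G}{L}{\pi}$ not dominated by $\jac{G}{L}{\sigma}$; the point is to show this forces $\pi$ to have a \emph{single} irreducible sub, contradicting reducibility unless things collapse. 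Concretely: if $\pi=\pi_0$ were not all of $\pi$, then $[\pi]-[\pi_0]\geq 0$ has a constituent disjoint from $\sigma$ as well, iterate to write $[\pi]\leq[\Pi]-[\sigma]$, contradicting (2). Hence $\pi$ is irreducible.

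\textbf{Main obstacle.} The delicate point is the bookkeeping in $\mathfrak{R}(G)$: from $\pi\leq\Pi$ and $\sigma\leq\Pi$ together with ``$\pi,\sigma$ have no common constituent'' one wants $[\pi]+[\sigma]\leq[\Pi]$, and this is exactly where the disjointness of the supports of $[\pi]$ and $[\sigma]$ as effective elements of the free abelian group $\mathfrak{R}(G)$ is used — multiplicities of a given irreducible $\rho$ satisfy $\operatorname{mult}(\rho,\pi)+\operatorname{mult}(\rho,\sigma)\leq\max$ only if one of them is $0$, which is precisely the no-common-subquotient hypothesis. Getting the logical structure right so that hypotheses (2) and (3) are each used exactly where needed — (2) to block $[\pi]+[\sigma]\leq[\Pi]$, (3) to block the possibility that all of $\pi$'s $L$-Jacquet module is already absorbed into $\sigma$'s — is the crux; the rest is formal manipulation with the exact functor $\jac{G}{L}{\cdot}$ and the partial order $\leq$ on $\mathfrak{R}(L)$.
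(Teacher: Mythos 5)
First, a point of comparison: the paper does not prove this lemma at all --- it is quoted from Tadi\'c \cite{MR1658535} --- so there is no in-paper argument to match; your attempt has to stand on its own. Its core idea for the ``moreover'' clause is correct and is the standard one: if $\pi$ and $\sigma$ had no common irreducible subquotient, then for every irreducible $\rho$ at least one of $\mult{\rho}{\pi}$, $\mult{\rho}{\sigma}$ vanishes, so hypothesis (1) forces $\coset{\pi}+\coset{\sigma}\leq\coset{\Pi}$ in $\mathfrak{R}(G)$; applying the exact, hence additive and order-preserving, functor $\jac{G}{L}{\cdot}$ then contradicts hypothesis (2). Note this step uses only (1) and (2) and is completely independent of whether $\pi$ is irreducible, so the case split you propose around it is a red herring.

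The genuine gap is in your derivation of reducibility. Your third paragraph sets out to prove ``no common subquotient $\Rightarrow$ $\pi$ irreducible'' and literally concludes ``Hence $\pi$ is irreducible,'' which is the negation of what the lemma asserts; the detour through irreducible subrepresentations $\pi_0$ and the proposed iteration do not converge to the conclusion. Hypothesis (3) enters in one line and you never actually write it: suppose $\pi$ were irreducible. By the first step $\pi$ and $\sigma$ share an irreducible constituent, and the only irreducible constituent of an irreducible $\pi$ is $\pi$ itself, so $\coset{\pi}\leq\coset{\sigma}$ in $\mathfrak{R}(G)$, whence $\jac{G}{L}{\pi}\leq\jac{G}{L}{\sigma}$, contradicting (3). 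Therefore $\pi$ is reducible. With that replacement your argument is complete; as written, the reducibility claim is not established.
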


In many cases we are able to prove the reducibility of $\pi = \Ind_{M}^{G}(\Omega)$
by applying \Cref{Tadic::irr} to the following data:
\begin{itemize}
	\item $\pi = \Ind_{M}^{G}(\Omega)$.
	\item $\Pi=\Ind_T^G(\lambda_{a.d.})$, where $\lambda_{a.d.}$ is an anti-dominant exponent in the $\weyl{G}$-orbit of $\jac{T}{M}{\Omega}$.
	\item $\sigma = \Ind_L^G\tau$, where $L$ is a standard Levi of $G$ and $\tau$ is a one-dimensional representation of $L$ such that $\jac{L}{T}{\tau}\in\weyl{G}\cdot\lambda_{a.d.}$.
\end{itemize}
Here condition \textit{(1)} is automatically satisfied.
By \cite[Lemma 3.4]{E6}, it holds that
\begin{equation} \label{formula:: full multiplicty}
\mult{\lambda_{a.d}}{\jac{G}{T}{\pi}} = \mult{\lambda_{a.d}}{\jac{G}{T}{\Ind_L^G\tau}} = |\Stab_{\weyl{G}}(\lambda_{a.d})|.
\end{equation}
Hence, condition \textit{(2)} of \Cref{Tadic::irr} is also satisfied.
Condition \textit{(3)}, on the other hand, needs to be checked for any candidate $\sigma = \Ind_L^G\tau$ by comparing $\mult{\lambda}{\jac{G}{T}{\pi}}$ and $\mult{\lambda}{\jac{G}{T}{\sigma}}$ for any $\lambda\in \jac{G}{T}{\pi}$.

We point out that, in most cases where $\pi$ is reducible, one can use a comparison with another degenerate principal series, that is, by taking $\sigma = \Ind_L^G\tau$ with $L$ being a maximal Levi subgroup too.

Note that $\tau$ is determined by $\jac{L}{T}{\tau}$.
It follows that, since the number of standard Levi subgroups is finite and so is $\weyl{G}\cdot\lambda_{a.d.}$, the number of possible candidates $\Ind_L^G\tau$ is finite.

%
%
%
%
%

\subsection{Non-Regular Case - Irreducibility Test}\label{subsection :: nonreg}

We now describe the main method of proving the irreducibility of $\pi =\ind{M}{G}{\Omega}$ implemented in this paper.

We consider the set of functions 
\[\mathcal{S} = \set{  f \: : \:  \bfX(T) \: \rightarrow \N \: : \: f \text{ has a finite  support } }.\]
Note that $\mathcal{S}$ has a natural partial order.
To any admissible representation $\sigma$ of $G$ we associate a function $f_{\sigma} \in \mathcal{S}$ by the following recipe
\[f_{\sigma}(\lambda) =  \mult{\lambda}{\jac{G}{T}{\sigma}}.\]

We say that a finite sequence $\set{f_{k}}_{k=0}^{n}$ in $\mathcal{S}$ is  \textbf{$\sigma$-dominated} if it satisfies:
\begin{enumerate}[label=$(F\arabic{*})$,]
\item\label{sequnce::1}
$f_{k} \leq f_{\sigma}$ for every $k\leq n$.
\item\label{sequnce::2}
$f_{k} \leq f_{k+1}$ for every $k\leq n$.
\setcounter{enumi}{2}
\item\label{sequnce::3}
There exists $\lambda^{'} \in \bfX(T)$ such that
$f_{0}(\lambda)=\begin{cases}
1 & \lambda=\lambda^{'} \\
0 & \lambda\neq\lambda^{'}.
\end{cases}
$
\end{enumerate} 
If $\set{f_{k}}_{k=0}^{n}$ satisfies only \ref{sequnce::1} and \ref{sequnce::2}, we say that it is \textbf{non-unital}.

\begin{Prop}
	\label{proposition :: branching_rules}
	If there exists an irreducible subquotient $\sigma$ of $\pi$ and a $\sigma$-dominated sequence $\set{f_{k}}_{k=0}^{n}$ in $\mathcal{S}$ such that $f_{n} = f_{\pi}$, then $\pi$ is irreducible and $\pi=\sigma$.
\end{Prop}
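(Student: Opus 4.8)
The plan is to track multiplicities of $T$-exponents through the Jacquet functor and exploit that the leading exponent $\lambda_0 = r^G_T(\Omega)\res{T}$ of $\pi$ occurs with multiplicity one in $r^G_T(\pi)$. First I would observe that $f_\pi\in\mathcal{S}$ is well-defined since $r^G_T(\pi)$ has finite length (as noted after the Geometric Lemma, it is a finite sum of one-dimensional $T$-modules), and that for any subquotient $\sigma$ of $\pi$ one has $f_\sigma \leq f_\pi$ pointwise, with $\sum_{\lambda} f_\sigma(\lambda) = \sum_\lambda f_{\sigma'}(\lambda)\cdot(\text{something})$ — more precisely, exactness of the Jacquet functor gives $f_\pi = \sum_i f_{\sigma_i}$ where $\sigma_i$ runs over the Jordan--H\"older constituents of $\pi$ counted with multiplicity. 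Hence $\pi$ is irreducible if and only if $f_\pi = f_\sigma$ for one (equivalently, the unique) irreducible constituent $\sigma$, because $f_\sigma$ is a nonzero element of $\mathcal{S}$ and the $f_{\sigma_i}$ are all nonzero (each irreducible representation has nontrivial Jacquet module to $T$ — this uses that $\pi$, being a subquotient of a principal series, has all constituents with nonzero $T$-Jacquet module).

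Next I would use the $\sigma$-dominated sequence to pin down which constituent. By \ref{sequnce::3}, $f_0$ is the indicator of a single exponent $\lambda'$, and by \ref{sequnce::1} we have $f_0 \leq f_\sigma$, so $\lambda'$ is an exponent of $\sigma$ with multiplicity at least one; since $f_0(\lambda')=1 \leq f_\sigma(\lambda')$ this just says $\lambda' \in r^G_T(\sigma)$. The chain $f_0 \leq f_1 \leq \dots \leq f_n = f_\pi$ all bounded above by $f_\sigma$ (by \ref{sequnce::1}) forces $f_\pi \leq f_\sigma$. Combined with $f_\sigma \leq f_\pi$ (as $\sigma$ is a subquotient of $\pi$), we get $f_\sigma = f_\pi$. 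Then, since $f_\pi = \sum_i f_{\sigma_i}$ with each $f_{\sigma_i}$ a nonzero function in $\mathcal{S}$ (nonnegative integer values, not identically zero) and one of the summands already equals the total $f_\pi$, there can be only one summand; that is, $\pi$ has a single Jordan--H\"older constituent, namely $\sigma$, and moreover $\pi = \sigma$ (the multiplicity of $\sigma$ in $\pi$ is one, again because that multiplicity equals $f_\pi(\lambda')/f_\sigma(\lambda') = 1$, or simply because $f_\pi = f_\sigma$ and a multiplicity-two occurrence would double the exponent count). Since $\pi$ is admissible of finite length and has a unique constituent with multiplicity one, $\pi$ is irreducible and isomorphic to $\sigma$.

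The main obstacle I anticipate is the bookkeeping claim that $f_\pi = \sum_i f_{\sigma_i}$ over constituents with multiplicity and that each $f_{\sigma_i}\neq 0$; the first is just exactness of $r^G_T$ on short exact sequences (dévissage along a composition series), and the second needs the standard fact that every irreducible subquotient of a principal series representation has nonzero Jacquet module with respect to $T$ — equivalently, lies in the block of an unramified-type principal series, which is immediate here since $\sigma \hookrightarrow$ or $\twoheadleftarrow$ something induced from $T$ up to composition factors, so $r^G_T(\sigma) \neq 0$ by Frobenius reciprocity \eqref{formula::Frobenius} and the fact that $\sigma$ embeds in some $\Ind_T^G(\mu)$. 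Once these two facts are in hand, the argument is a short formal manipulation of the partial order on $\mathcal{S}$; no case analysis or computation is required, which is precisely why this proposition is the workhorse that reduces irreducibility questions to the combinatorial search for a dominated sequence.
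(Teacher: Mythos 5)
Your argument is correct and is essentially the standard one: from \ref{sequnce::1} and $f_n=f_\pi$ you get $f_\pi\leq f_\sigma$, the reverse inequality holds by exactness of the Jacquet functor, and then $f_\pi=\sum_i f_{\sigma_i}$ together with the fact that every irreducible subquotient of a principal series has nonzero Jacquet module along $T$ forces $\pi$ to have a single Jordan--H\"older constituent. The paper itself only cites \cite[Subsection 3.3]{E6} for this, and the argument there is the same multiplicity bookkeeping you carry out.
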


The proof for this can be found in \cite[Subsection 3.3]{E6}.
We use \Cref{proposition :: branching_rules} to show the irreducibility of $\pi$ in many cases by the following construction.

Fix an anti-dominant exponent $\lambda_{a.d}$ of $\pi$.
Since \[\mult{\lambda_{a.d}}{\jac{G}{T}{\pi}}=\mult{\lambda_{a.d}}{\jac{G}{T}{\Ind_{T}^{G}(\lambda_{a.d})}},\]
any irreducible representation $\sigma$ of $G$ such that $\lambda_{a.d}\leq \jac{G}{T}{\sigma}$ is a subquotient of $\pi$.
We fix such a $\sigma$ and construct a $\sigma$-dominated sequence $\set{f_{k}}_{k=0}^{n}$.
Note that $f_\sigma$ has finite support.

Let
\[
\Gamma = \set{(\lambda,L,\tau) \mvert \begin{matrix}
	\lambda\in r_T^G\pi \\
	\text{$L$ is a standard Levi subgroup of $G$} \\
	\text{$\tau$ is the unique irreducible representaion of $L$ such that $\lambda\leq\jac{L}{T}{\tau}$}
\end{matrix}} .
\]
The set $\Gamma$ is finite since $\weyl{G}\cdot\lambda_{a.d.}$ is finite, the number of standard Levi subgroups is finite and that for each $\lambda$ and $L$, $\Ind_T^L\lambda$ is of finite length.

We construct the sequence $\set{f_{k}}_{k=0}^{n}$ as follows:
\begin{enumerate}
	\item Let $f_0\in \mathcal{S}$ be defined by 
	$f_{0}(\lambda)= \delta_{\lambda,\lambda_{a.d}}$, where $\delta_{\lambda,\lambda_{a.d}}$ is the Kronecker delta function.
	
	\item Given the element $f_k$ and a triple $(\lambda,L ,\tau)\in\Gamma$, let $g\in\mathcal{S}$ be defined by
	\begin{equation}\label{formula::branchingrule}
	g(\mu) = 
	\max \set{f_{k}(\mu) , \ceil*[\big]{\frac{f_{k}(\lambda)}{\mult{\lambda}{\jac{L}{T}{\tau}}}} \cdot \mult{\mu}{\jac{L}{T}{\tau}}}. 
	\end{equation}
	
	\item If $g>f_k$, take $f_{k+1}=g$ and go back to step (2).
	\item If $g=f_k$ for all triples $(\lambda,L ,\tau)\in\Gamma$, take $n=k$.
	If $f_n=f_\pi$, then $\pi$ is irreducible.
\end{enumerate}

By \cite[Subsection 3.3]{E6}, the sequence $\set{f_{k}}_{k=0}^{n}$ a $\sigma$-dominated sequence.
Also, since $\Gamma$ is finite and there are only finitely many $g\in\mathcal{S}$ such that $g<f_\sigma$, this process terminates after a finite number of steps.
We also note, that $f_n$ is independent of the string of triples $(\lambda,L ,\tau)$ chosen in step (2).

Note that, if $f_{n} \neq f_{\pi}$, it does not imply that $\pi$ is reducible, as one can see from  \Cref{Prop::E7p50O1}.
Also, in this case, $f_n$ is $\sigma$-dominated for any subquotient $\sigma$ of $\pi$ which satisfy $\lambda_{a.d.}\leq \jac{G}{T}{\pi}$.

Throughout this paper, when we say a \textbf{branching rule calculation} we refer to a $\sigma$-dominated sequence $\set{f_{k}}_{k=0}^{n}$, not necessarily unital, constructed using steps (2) and (3) for a given $f_0$.
We then interpret the sequence as the following inference rule
\[
\suml_{\lambda\in \operatorname{Supp}{f_{0}}} f_0(\lambda)\times \coset{\lambda} \leq \jac{G}{T}{\sigma}
\quad \Rightarrow
\suml_{\lambda\in \operatorname{Supp}{f_{n}}} f_n(\lambda)\times \coset{\lambda} \leq \jac{G}{T}{\sigma}.
\]

The list of triples $(\lambda,L,\sigma)\in\Gamma$ which were used in this paper can be found in \Cref{App:knowndata}.
An explicit example for this process can be found in \cite[Appendix A]{E6} and the proof of \Cref{Prop::E7p50O1}.

\subsection{Length of Maximal Semi-Simple Subrepresentation}\label{subsection::length}

We now turn to the  calculation of the length of the maximal semi-simple subrepresentation and quotient of $\pi=\ind{M}{G}{\Omega}$ when $\pi$ is reducible. First we note that, by contragredience, it suffices to answer the former.
Recall that, if 
$\tau$ is a subrepresentation of $\pi= \Ind_{M}^{G}(\Omega)$, then, by induction in stages, one has
$\tau \hookrightarrow \Ind_{T}^{G}(\lambda_{0})$, where $\lambda_0 =\jac{M}{T}{\Omega}$. 
Thus, by \eqref{formula::Frobenius},
\begin{equation} \label{eq:: num_irr}
1\leq 
\dim_{\C} 
\Hom_T\bk{\jac{G}{T}{\tau},\lambda_0}.
\end{equation}
In particular, the length of the maximal semi-simple subrepresentation of $\pi$ is at most $\mult{\lambda_{0}}{\jac{G}{T}{\pi}}$.

Using \Cref{eq:: num_irr}, we phrase a criterion for $\pi$ to have a unique irreducible subrepresentation.  
\begin{Prop}\label{Prop::unique irr}
Let $\lambda_0 = \jac{M}{T}{\Omega}$ be the leading exponent of $\pi = \Ind_{M}^{G}(\Omega)$. 
\begin{enumerate}[ref =\Cref{Prop::unique irr}(\arabic*)] 
\item \label{Prop::unique::1}
For any irreducible subrepresentation $\tau$ of $\pi = \Ind_{M}^{G}(\Omega)$, it holds that $\lambda_0 \leq \jac{G}{T}{\tau}$.
\item \label{Prop::unique::2}
If $\tau$ is an irreducible constituent of $\pi$ which satisfy $\mult{\lambda_0}{\jac{G}{T}{\tau}} = \mult{\lambda_0}{\jac{G}{T}{\pi}}$, then $\pi$ admits a unique irreducible subrepresentation and this subrepresentation is $\tau$.
\end{enumerate}
\end{Prop}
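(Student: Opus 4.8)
The plan is to prove both parts by exploiting Frobenius reciprocity (\Cref{formula::Frobenius}) together with the elementary fact, recalled just before the statement, that any irreducible subrepresentation $\tau$ of $\pi = \Ind_M^G(\Omega)$ embeds into $\Ind_T^G(\lambda_0)$ via induction in stages, so that $\Hom_T\bk{\jac{G}{T}{\tau},\lambda_0}\neq 0$. For part (1), I would simply observe that $\Hom_T\bk{\jac{G}{T}{\tau},\lambda_0}\neq 0$ means $\lambda_0$ occurs in $\jac{G}{T}{\tau}$ as an element of $\mathfrak{R}(T)$, i.e. $\lambda_0\leq\jac{G}{T}{\tau}$; since $\jac{G}{T}{}$ is exact this is literally the assertion $\mult{\lambda_0}{\jac{G}{T}{\tau}}\geq 1$. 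This is immediate and requires no real work.

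For part (2), suppose $\tau$ is an irreducible constituent of $\pi$ with $\mult{\lambda_0}{\jac{G}{T}{\tau}} = \mult{\lambda_0}{\jac{G}{T}{\pi}}$. First I would argue that $\tau$ is actually a subrepresentation of $\pi$: by \Cref{eq:: num_irr} and the multiplicity hypothesis together with exactness of $\jac{G}{T}{}$, any other constituent of $\pi$ contributes $0$ to $\mult{\lambda_0}{\jac{G}{T}{\pi}}$, so $\lambda_0$ appears in the Jacquet module of no constituent of $\pi$ other than (copies of) $\tau$; but $\pi\hookrightarrow$ has some irreducible subrepresentation $\tau'$, and by part (1) $\lambda_0\leq\jac{G}{T}{\tau'}$, forcing $\tau'\cong\tau$. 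This shows $\tau$ is a subrepresentation. Now to get uniqueness: if $\tau_1,\tau_2$ were two distinct irreducible subrepresentations, then $\tau_1\oplus\tau_2\hookrightarrow\pi$ (distinct irreducibles have zero-intersection sum inside $\pi$, or more carefully: a semisimple subrepresentation of length $2$), so $\mult{\lambda_0}{\jac{G}{T}{\tau_1}} + \mult{\lambda_0}{\jac{G}{T}{\tau_2}} \leq \mult{\lambda_0}{\jac{G}{T}{\pi}} = \mult{\lambda_0}{\jac{G}{T}{\tau}}$, and since each summand on the left is $\geq 1$ by part (1) while $\tau_1\cong\tau$ gives $\mult{\lambda_0}{\jac{G}{T}{\tau_1}}=\mult{\lambda_0}{\jac{G}{T}{\tau}}$, we get $\mult{\lambda_0}{\jac{G}{T}{\tau_2}}\leq 0$, a contradiction. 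Hence the irreducible subrepresentation is unique, and it is $\tau$.

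The main subtlety — not really an obstacle but the point requiring care — is the bookkeeping in $\mathfrak{R}(T)$: I must use that $\jac{G}{T}{}$ is exact so that multiplicities in Jacquet modules add over short exact sequences, and that $\mult{\lambda_0}{\jac{G}{T}{\pi}}$ is finite (which holds since $\jac{G}{T}{\pi}$ has finite length, as noted in the excerpt). One should also be slightly careful that "$\tau$ is an irreducible constituent with maximal $\lambda_0$-multiplicity" does not a priori say $\tau$ embeds in $\pi$; that is exactly what the argument in the previous paragraph establishes before concluding uniqueness. Everything else is a direct application of Frobenius reciprocity and the leading-exponent formula \eqref{formula::leadingexp}.
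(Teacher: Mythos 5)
Your proof is correct and follows essentially the same route as the paper's: part (1) is Frobenius reciprocity applied to the embedding $\tau\hookrightarrow\Ind_{T}^{G}(\lambda_0)$ obtained by induction in stages, and part (2) rests on the observation that the multiplicity hypothesis forces $\tau$ to be the \emph{only} irreducible constituent of $\pi$ whose Jacquet module contains $\lambda_0$ (and to occur with multiplicity one), so every irreducible subrepresentation must be a single copy of $\tau$. The paper's proof is simply terser, leaving implicit the direct-sum and multiplicity bookkeeping that you spell out.
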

\begin{proof}
The first item is a direct consequence of \Cref{formula::Frobenius}. For the second item, we argue as follows. 
By \ref{Prop::unique::1}, each irreducible subrepresentation $\tau$ of $\pi$ has $\mult{\lambda_0}{\jac{G}{T}{\tau}}\neq0$. However, there is exactly one irreducible constituent with that property. Hence, $\pi$ admits a unique irreducible subrepresentation.
\end{proof} 

As a result, in order to show that $\pi$ admits a unique irreducible subrepresentation it would be enough to show one of the following:
\begin{itemize}
\item
$\mult{\lambda_0}{\jac{G}{T}{\pi}} =1$.
\item
There exists a subquotient $\sigma$ of $\pi$ such that $\lambda_{a.d}\leq \jac{G}{T}{\sigma}$, and a $\sigma$-dominated sequence of functions $\set{f_i}_{i=0}^n$ such that $f_n(\lambda_0)=f_\pi(\lambda_0)$.
\end{itemize}

In some cases, where this approach does not suffice, we will use the following Corollary of \Cref{Prop::unique irr} to prove that $\pi$ admits a unique irreducible subrepresentation.
\begin{Cor} \label{cor::unique_irr}
Suppose $\pi= \Ind_{M}^{G}(\Omega)$ admits an embedding into $\Ind_{T}^{G}(\lambda)$ for some $\lambda\in \weyl{G}\cdot\lambda_0$, where $\lambda_0=\jac{M}{T}{\Omega}$, and there is a unique irreducible constituent $\tau$  of $\pi$ such that 
$\mult{\lambda}{\jac{G}{T}{\tau}} = \mult{\lambda}{\jac{G}{T}{\pi}}$. Then, $\pi$ has a unique irreducible subrepresentation.   
\end{Cor}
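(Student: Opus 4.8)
The plan is to reduce the statement to \ref{Prop::unique::2} by transporting all multiplicity information from the leading exponent $\lambda_0$ to the exponent $\lambda \in \weyl{G}\cdot\lambda_0$ that appears in the hypothesis. First I would observe that, since all exponents of $\pi$ lie in a single $\weyl{G}$-orbit and the Jacquet functor is exact, the function $\mu \mapsto \mult{\mu}{\jac{G}{T}{\rho}}$ for any finite-length subquotient $\rho$ of $\Ind_T^G(\lambda_0)$ is $\weyl{G}$-invariant — this is the standard fact that the $T$-exponents of an induced-from-Borel representation, counted with multiplicity, form a $\weyl{G}$-stable multiset (a consequence of \Cref{Lemma::geomtric_lema} applied with $L = T$, or of the behaviour of intertwining operators). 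In particular $\mult{\lambda}{\jac{G}{T}{\pi}} = \mult{\lambda_0}{\jac{G}{T}{\pi}}$ and, for the distinguished constituent $\tau$ in the hypothesis, $\mult{\lambda}{\jac{G}{T}{\tau}} = \mult{\lambda_0}{\jac{G}{T}{\tau}}$, while for every other irreducible constituent $\tau'$ one has $\mult{\lambda}{\jac{G}{T}{\tau'}} = \mult{\lambda_0}{\jac{G}{T}{\tau'}}$ as well.

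Next I would exploit the uniqueness hypothesis: $\tau$ is the only irreducible constituent of $\pi$ with $\mult{\lambda}{\jac{G}{T}{\tau}} = \mult{\lambda}{\jac{G}{T}{\pi}}$. Combined with the $\weyl{G}$-invariance of multiplicities from the previous step, this says $\tau$ is the only constituent with $\mult{\lambda_0}{\jac{G}{T}{\tau}} = \mult{\lambda_0}{\jac{G}{T}{\pi}}$. The subtlety to resolve here is that a priori several constituents could share the maximal value at $\lambda_0$ only after reshuffling; but since multiplicities at $\lambda$ and at $\lambda_0$ agree constituent-by-constituent, the constituent achieving the maximum is literally the same. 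So $\tau$ satisfies the hypothesis of \ref{Prop::unique::2} with respect to the leading exponent $\lambda_0$, and that proposition gives that $\pi$ has a unique irreducible subrepresentation.

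Alternatively — and this is the route I would actually write up, since it avoids invoking \ref{Prop::unique::2} through a $\lambda$-to-$\lambda_0$ translation and instead works directly with the embedding into $\Ind_T^G(\lambda)$ — I would argue as follows. By \Cref{formula::Frobenius} applied to $\pi \hookrightarrow \Ind_T^G(\lambda)$, any irreducible subrepresentation $\tau'$ of $\pi$ has $\Hom_T(\jac{G}{T}{\tau'},\lambda) \ne 0$, hence $\mult{\lambda}{\jac{G}{T}{\tau'}} \ge 1$. Summing over a decomposition of the socle and using that $\jac{G}{T}{-}$ is exact, the total contribution of the socle of $\pi$ to $\mult{\lambda}{\jac{G}{T}{\pi}}$ is at least the length of the socle; but the hypothesis forces the single constituent $\tau$ to already account for the full value $\mult{\lambda}{\jac{G}{T}{\pi}}$, leaving no room in $\jac{G}{T}{\pi}$ for a copy of $\lambda$ coming from any constituent other than $\tau$. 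Therefore $\tau$ is the only irreducible constituent that can embed into $\pi$, so the socle is irreducible and equals $\tau$.

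I expect the main obstacle to be the bookkeeping in the alternative argument: one must be careful that "the single constituent $\tau$ accounts for the full value" genuinely precludes \emph{any} other constituent from contributing to $\mult{\lambda}{\jac{G}{T}{\pi}}$, which uses $\mult{\lambda}{\jac{G}{T}{\pi}} = \sum_{\tau'} \mult{\tau'}{\pi}\cdot\mult{\lambda}{\jac{G}{T}{\tau'}}$ together with $\mult{\lambda}{\jac{G}{T}{\tau}} = \mult{\lambda}{\jac{G}{T}{\pi}}$ and $\mult{\tau}{\pi}\ge 1$ — so every summand with $\tau'\neq\tau$ must vanish. Once that is pinned down, the conclusion that the socle has length one is immediate from \Cref{eq:: num_irr}.
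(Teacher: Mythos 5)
Your second argument --- the one you say you would actually write up --- is correct and is essentially the paper's own: the corollary is proved exactly as \ref{Prop::unique::2} is, with the hypothesized embedding $\pi\hookrightarrow\Ind_{T}^{G}(\lambda)$ playing the role of the canonical embedding $\pi\hookrightarrow\Ind_{T}^{G}(\lambda_0)$ obtained by induction in stages. Frobenius reciprocity \eqref{formula::Frobenius} forces every irreducible subrepresentation $\tau'$ of $\pi$ to satisfy $\mult{\lambda}{\jac{G}{T}{\tau'}}\geq 1$; the identity $\mult{\lambda}{\jac{G}{T}{\pi}}=\sum_{\tau'}\mult{\tau'}{\pi}\cdot\mult{\lambda}{\jac{G}{T}{\tau'}}$, together with $\mult{\tau}{\pi}\geq 1$ and the hypothesis $\mult{\lambda}{\jac{G}{T}{\tau}}=\mult{\lambda}{\jac{G}{T}{\pi}}\geq 1$, shows that $\mult{\tau}{\pi}=1$ and that every other constituent has $\mult{\lambda}{\jac{G}{T}{\tau'}}=0$, so the socle is a single copy of $\tau$. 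The bookkeeping you flag in your last paragraph is exactly the right point to pin down.

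Your first route, however, rests on a false claim: the multiset of exponents of a \emph{subquotient} of $\Ind_{T}^{G}(\lambda_0)$ is not $\weyl{G}$-stable. \Cref{Lemma::geomtric_lema} with $L=T$ gives $\coset{\jac{G}{T}{\Ind_{T}^{G}(\lambda_0)}}=\sum_{w\in\weyl{G}}\coset{w\cdot\lambda_0}$, which is indeed $\weyl{G}$-stable, but this says nothing constituent-by-constituent. Already for $\SL_2$ and $\lambda_0=|\cdot|$, the principal series $\Ind_{T}^{G}(|\cdot|)$ has the trivial and Steinberg representations as its constituents, with $\jac{G}{T}{\Id}=\coset{|\cdot|^{-1}}$ and $\jac{G}{T}{\operatorname{St}}=\coset{|\cdot|}$, neither of which is $\s{\alpha}$-invariant. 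Indeed, the entire branching-rule apparatus of this paper exists precisely because irreducible constituents are distinguished by their (non-$\weyl{G}$-invariant) exponent multiplicities; were your invariance claim true, \ref{Prop::unique::2} and this corollary would be contentless. Since you only offer that route as an alternative and commit to the second one, the proof you submit stands, but the invariance claim should be excised.
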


A direct computation shows that, if $\Real(s_0)>0$, then $\mult{\lambda_{0}}{\jac{G}{T}{\pi}}=1$. In particular, $\pi$ admits a unique irreducible subrepresentation. As a result we will consider only the cases where $\Real(s_0)\leq 0$.


\section{The Degenerate Principal Series of $E_7$}\label{section::results}

In this section we state and prove our main theorem using the algorithm and tools presented in \Cref{section:: algo}. The outline of this section is as follows:
\begin{itemize}
\item
In \Cref{section::structure}, we recall the structure of the split, simple, simply-connected exceptional group of type $E_7$.
\item
In \Cref{subsection:: main}, we state our main theorem about the points of reducibility of the degenerate principal series of $G$ and the length of the maximal semi-simple subrepresentation.
\item
In \Cref{subsection::proof} and \Cref{section :: result E7 } we prove the theorem. \Cref{subsection::proof} is dedicated to the cases where the algorithm in \Cref{section:: algo} yielded a complete answer, 
the remaining cases  are dealt with in \Cref{section :: result E7 }.
\end{itemize}

\subsection{The Structure of the Exceptional Group of Type $E_7$}\label{section::structure}

Let $G$ be the split, semi-simple, simply-connected group of type $E_7$. In this section we describe the structure of $G$ and set notations for the rest of the section. We fix a Borel subgroup $\para{B}$ and a maximal split torus $T\subset \para{B}$. 
The set of roots, $\Phi_{G}$, contains $126$ roots. The group $G$ is generated by symbols \[\set{x_{\alpha}(r) \: : \: \alpha \in \Phi_{G} ,r \in F}\]
subject to the Chevalley relations as in \cite[Section 6]{MR0466335}.

We label the simple roots $\Delta_{G}$ and the Dynkin diagram of $G$ as follows:
\[\begin{tikzpicture}[scale=0.5]
\draw (-1,0) node[anchor=east]{};
\draw (0 cm,0) -- (10 cm,0);
\draw (4 cm, 0 cm) -- +(0,2 cm);
\draw[fill=black] (0 cm, 0 cm) circle (.25cm) node[below=4pt]{$\alpha_1$};
\draw[fill=black] (2 cm, 0 cm) circle (.25cm) node[below=4pt]{$\alpha_3$};
\draw[fill=black] (4 cm, 0 cm) circle (.25cm) node[below=4pt]{$\alpha_4$};
\draw[fill=black] (6 cm, 0 cm) circle (.25cm) node[below=4pt]{$\alpha_5$};
\draw[fill=black] (8 cm, 0 cm) circle (.25cm) node[below=4pt]{$\alpha_6$};
\draw[fill=black] (10 cm, 0 cm) circle (.25cm) node[below=4pt]{$\alpha_7$};
\draw[fill=black] (4 cm, 2 cm) circle (.25cm) node[right=3pt]{$\alpha_2$};
\end{tikzpicture}\]

Recall that for  $\Theta \subset \Delta_{G}$ we denote by $M_{\Theta}$ the standard Levi subgroup  of $G$ such that $\Delta_{M} = \Theta$. If $\para{P}$ is a maximal parabolic subgroup, we let $M_{i}$ denote the Levi subgroup of $\para{P}_{i}= \para{P}_{\Delta_{G} \setminus \set{\alpha_i}}$. 

\begin{Lem}\label{Lemma::structrue::E7}
Under these notations, it holds that:
\begin{enumerate}[ref =\Cref{Lemma::structrue::E7}.(\arabic*)] 
\item \label{Lemma::structrue::E7 ::P1}
$M_{1} \simeq GSpin_{12}$.
\item \label{Lemma::structrue::E7 ::P2}
$M_{2} \simeq GL_{7}$.
\item \label{Lemma::structrue::E7 ::P3}
$M_{3}  =  \set{(g_1,g_2)\in GL_2 \times GL_6 \: : \:  \det g_1 = \det g_2}$.
\item \label{Lemma::structrue::E7 ::P4}
$M_{4}  =  \set{(g_1,g_2,g_3)\in GL_2 \times GL_3 \times GL_4 \: : \:  \det g_1 = \det g_2 =\det g_3}$.
\item \label{Lemma::structrue::E7 ::P5}
$M_{5}  =  \set{(g_1,g_2)\in GL_5 \times GL_3 \: : \:  \det g_1 = \det g_2}$.
\item \label{Lemma::structrue::E7 ::P6}
$M_{6}  =  \set{(g_1,g_2)\in GL_2 \times GSpin_{10} \: : \:  \det g_1 = \det g_2}$.
\item \label{Lemma::structrue::E7 ::P7}
$M_{7}\simeq GE_{6}$
\end{enumerate} 
\end{Lem}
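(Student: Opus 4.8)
The plan is to identify each Levi subgroup $M_i$ by reading off its root datum from the $E_7$ Dynkin diagram and then matching that datum to a known reductive group. For the derived-group level the combinatorics are classical: deleting the node $\alpha_i$ from the $E_7$ diagram leaves a sub-diagram whose connected components determine the semisimple type of $M_i$. Deleting $\alpha_1$ leaves the $D_6$ diagram on $\{\alpha_3,\alpha_4,\alpha_5,\alpha_6,\alpha_7,\alpha_2\}$ (with $\alpha_2$ the extra node on $\alpha_4$); deleting $\alpha_2$ leaves the $A_6$ chain $\alpha_1$–$\alpha_3$–$\alpha_4$–$\alpha_5$–$\alpha_6$–$\alpha_7$; deleting $\alpha_3$ gives $A_1\times A_5$ (the node $\alpha_1$ together with $\alpha_4,\alpha_2,\alpha_5,\alpha_6,\alpha_7$); deleting $\alpha_4$ gives $A_1\times A_2\times A_3$ ($\alpha_2$; $\alpha_1,\alpha_3$; $\alpha_5,\alpha_6,\alpha_7$); deleting $\alpha_5$ gives $A_4\times A_2$ ($\alpha_1,\alpha_3,\alpha_4,\alpha_2$ forming a $D_4$—wait, this needs care, see below); deleting $\alpha_6$ gives $D_5\times A_1$; deleting $\alpha_7$ gives $E_6$. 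I would present these as a short case analysis, drawing on the diagram already displayed in \Cref{section::structure}.

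The main subtlety — and the step I expect to be the real obstacle — is upgrading each ``derived type'' statement to the precise isogeny/similitude-group statement in the lemma, i.e.\ keeping track of the central torus and the exact fundamental-group identification. Since $G$ is simply connected of type $E_7$, the Levi $M_i$ has a one-dimensional central torus (as $P_i$ is maximal), and the cocharacter lattice of $M_i$ is the full coweight lattice of $E_7$ restricted appropriately; one must check that the resulting group is the \emph{similitude} group (e.g.\ $GL_7$ rather than $GL_1\times SL_7$, or $GSpin_{12}$ rather than $GL_1\times Spin_{12}$) and, in the cases $M_3,M_4,M_5,M_6$, that the determinant/similitude characters are tied together by a single equation. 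Concretely, for $\alpha_2$ the deleted-node procedure gives derived group $SL_7$; the claim $M_2\simeq GL_7$ then follows because the character lattice $X^*(M_2)$ is generated by $X^*(SL_7)$ together with $\fun{\alpha_2}$, and $\inner{\fun{\alpha_2},\check\alpha_i}$ for $i\neq 2$ realizes exactly the cocharacter lattice of $GL_7$. The analogous computation in the $A_1\times A_5$, $A_1\times A_2\times A_3$, $A_4\times A_2$, $A_1\times D_5$ cases produces the fibre-product description: the center of the simply-connected cover of each factor is cyclic, and the single fundamental coweight $\check{\fun{\alpha_i}}$ glues the factors along the common determinant, giving the $\det g_1=\det g_2=\cdots$ condition.

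I would organize the proof as follows. First, fix once and for all the identification $X^*(T)\cong$ the $E_7$ weight lattice (which equals the root lattice, since $E_7$ simply connected means the fundamental group on the dual side is $\Z/2$, but on the character side the lattice is the root lattice) together with the pairing conventions of \Cref{section :: notations}. Second, for each $i$ run the deleted-node computation to get the semisimple type of the derived group $[M_i,M_i]$, citing the standard classification of Levi subgroups (e.g.\ Bourbaki, or the tables in \cite{MR0466335}). Third, compute $X^*(M_i)=\Z\Theta\oplus\Z\fun{\alpha_i}$ where $\Theta=\Delta_G\setminus\{\alpha_i\}$, and compare with the character lattice of the candidate similitude group; the equality of lattices plus equality of root data forces an isomorphism of the split reductive groups over $F$. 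Fourth, in the four ``fibre product'' cases, exhibit the isomorphism explicitly by sending $(g_1,\dots)$ to the appropriate product of $\GL$-blocks inside $M_i$ and verifying it is bijective on $F$-points using the $\det$-compatibility. The only genuinely delicate point is making sure the similitude factor is $GSpin$ and not some other form isogenous to $\Gm\times Spin$; here I would invoke that a maximal Levi in a simply connected group has connected center which is a rank-one split torus and that the resulting extension of $Spin_{2n}$ by $\Gm$ is the standard $GSpin_{2n}$ — this is where I would spend the most words, possibly by pinning down $GSpin_{12}$, $GSpin_{10}$ via their (co)character lattices explicitly.
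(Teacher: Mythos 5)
Your plan is correct and is essentially the argument the authors intend: the paper states \Cref{Lemma::structrue::E7} without proof, but the underlying computation is exactly the one you describe --- read off the semisimple type of $M_i^{der}$ by deleting the node $\alpha_i$, and then recover $M_i$ as the quotient $\bk{Z(M_i)^{0}\cap M_i^{der}}\backslash\bk{Z(M_i)^{0}\times M_i^{der}}$ by computing the finite intersection of the one-dimensional central torus with the (simply connected) derived group, which produces precisely the $\det g_1=\det g_2=\cdots$ gluing and the similitude groups $GSpin_{12}$, $GSpin_{10}$, $GE_6$, $GL_7$. Two small corrections to your write-up: the component $\alpha_1$--$\alpha_3$--$\alpha_4$--$\alpha_2$ obtained by deleting $\alpha_5$ is of type $A_4$, not $D_4$ (after removing $\alpha_5$ the node $\alpha_4$ has only two neighbours), so your first instinct was right and the ``needs care'' aside can be dropped; and for simply connected $E_7$ one has $X^*(T)=P$ (the weight lattice) and $X_*(T)=Q^\vee$ (the coroot lattice), with $P/Q\cong\Z/2$ nontrivial --- your parenthetical identifying the weight lattice with the root lattice is false, and the containment $P\subseteq X^*(T)$ is exactly what makes the fundamental weights $\fun{\alpha_i}$ honest characters of $T$, as the paper uses throughout.
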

We record here, for $1 \leq i \leq 7$, the cardinality of $W^{M_i,T}$, the set of shortest representatives of $\weyl{G} \slash \weyl{M_i}$
\begin{center}
\begin{tabular}{|c|c|c|c|c|c|c|c|}
\hline
 $i$ & 1 & 2 & 3 & 4 & 5 & 6 & 7 \\
 \hline 
$|W^{M_i,T}|$ & 126 & 576 & 2,016 & 10,080 & 4,032 & 756 & 56\\
\hline
\end{tabular} 
\end{center}
\vspace{0.5cm}
We also mention that $|\weyl{G}| =  2,903,040$.
By \Cref{formula :: char_levi},
every $\lambda \in \bfX(T)$ is of the form 
\[\lambda = \sum_{i=1}^{7} \Omega_{\alpha_{i}} \circ \fun{\alpha_i}.\]
As a shorthand, we will write
\[\esevencharp{\Omega_1}{\Omega_2}{\Omega_3}{\Omega_4}{\Omega_5}{\Omega_6}{\Omega_7}=\sum_{i=1}^{7} \Omega_{\alpha_{i}} \circ \fun{\alpha_i}.\]

Also, let 
$\esevenchar{\Omega_1}{\Omega_2}{\Omega_3}{\Omega_4}{\Omega_5}{\Omega_6}{\Omega_7}$ denote its class in $\mathfrak{R}(T)$.

\subsection{Main Theorem} \label{subsection:: main}
\begin{Thm}\label{theroem::main}
\begin{enumerate}
\item
For any $1 \leq  i \leq 7 $, all reducible regular $\Ind_{\para{M}_i}^{G}(\Omega_{\para{M}_i,s,\chi})$ and all non-regular $\Ind_{\para{M}_i}^{G}(\Omega_{\para{M}_i,s,\chi})$
are given in the following tables. For every maximal parabolic subgroup $\para{P}_i$, the table lists the values of $s\leq 0$  and $k=ord(\chi)$ such that $\pi$ is regular and reducible or non-regular. In particular, irr. stands for non-regular and reducible, red. stands for non-regular and reducible, while $red.^*$  stands for regular and reducible. For any triple $[i,s,k]$, not appearing in the table, the degenerate principal series $\Ind_{M_i}^{G}(\Omega_{M_i,s,\chi})$, with $ord(\chi)=k$, is regular and irreducible.
\begin{itemize} 
\item 
  For $\para{P}=\para{P}_{1}$ 
\begin{table}[H] 
 \caption{$\para{P}_{1}$-     Reducibility Points} 
 \begin{tabular}{|c|c|c|c|c|c|c|c|c|c|c|c|c|c|c|c|c|c|c|c} 
  \hline 
\diagbox{$ord\bk{\chi}$}{$s$}  & $-\frac{17}{2}$  & $-\frac{15}{2}$  & $-\frac{13}{2}$  & $-\frac{11}{2}$  & $-\frac{9}{2}$  & $-\frac{7}{2}$  & $-\frac{5}{2}$  & $-\frac{3}{2}$  & $-\frac{1}{2}$  & $0$  
 \\ \hline 
 $ 1 $ & \RR & \NRI & \NRI & \NRR & \NRI & \NRR & \NRI & \NRI & \NRR & \NRI  
 \\ \hline 
 $ 2 $ & \RI & \RI & \RI & \RI & \RI & \RI & \RI & \RI & \RR & \NRI  
 \\ \hline 
\end{tabular} 
  \label{Tab::E7::REG::P_1} 
 \end{table} 
\item 
  For $\para{P}=\para{P}_{2}$ 
\begin{table}[H] 
 \caption{$\para{P}_{2}$-     Reducibility Points} 
 \begin{tabular}{|c|c|c|c|c|c|c|c|c|c|c|c|c|c|c|c|c|c|c|c} 
  \hline 
\diagbox{$ord\bk{\chi}$}{$s$}  & $-7$  & $-6$  & $-5$  & $-4$  & $-3$  & $-2$  & $-\frac{3}{2}$  & $-1$  & $-\frac{1}{2}$  & $0$  
 \\ \hline 
 $ 1 $ & \RR & \NRI & \NRR & \NRR & \NRR & \NRR & \NRI & \NRR & \NRI & \NRI  
 \\ \hline 
 $ 2 $ & \RI & \RI & \RI & \RI & \RI & \NRR & \NRI & \NRI & \NRI & \NRR  
 \\ \hline 
\end{tabular} 
  \label{Tab::E7::REG::P_2} 
 \end{table} 
\item 
  For $\para{P}=\para{P}_{3}$ 
\begin{table}[H] 
 \caption{$\para{P}_{3}$-     Reducibility Points} 
 \begin{tabular}{|c|c|c|c|c|c|c|c|c|c|c|c|c|c|c|c|c|c|c|c} 
  \hline 
\diagbox{$ord\bk{\chi}$}{$s$}  & $-\frac{11}{2}$  & $-\frac{9}{2}$  & $-\frac{7}{2}$  & $-\frac{5}{2}$  & $-2$  & $-\frac{3}{2}$  & $-1$  & $-\frac{1}{2}$  & $-\frac{1}{6}$  & $0$  
 \\ \hline 
 $ 1 $ & \RR & \NRR & \NRR & \NRR & \NRI & \NRR & \NRI & \NRR & \NRI & \NRI  
 \\ \hline 
 $ 2 $ & \RI & \RI & \RI & \RR & \NRI & \NRR & \NRI & \NRR & \RI & \NRI  
 \\ \hline 
 $ 3 $ & \RI & \RI & \RI & \RI & \RI & \RI & \RI & \RR & \NRI & \RI  
 \\ \hline 
\end{tabular} 
  \label{Tab::E7::REG::P_3} 
 \end{table} 
\item 
  For $\para{P}=\para{P}_{4}$ 
\begin{table}[H] 
 \caption{$\para{P}_{4}$-     Reducibility Points} 
 \begin{tabular}{|c|c|c|c|c|c|c|c|c|c|c|c|c|c|c|c|c|c|c|c} 
  \hline 
\diagbox{$ord\bk{\chi}$}{$s$}  & $-4$  & $-3$  & $-2$  & $-\frac{3}{2}$  & $-1$  & $-\frac{2}{3}$  & $-\frac{1}{2}$  & $-\frac{1}{3}$  & $-\frac{1}{4}$  & $0$  
 \\ \hline 
 $ 1 $ & \RR & \NRR & \NRR & \NRR & \NRR & \NRR & \NRR & \NRI & \NRI & \NRI  
 \\ \hline 
 $ 2 $ & \RI & \RI & \RR & \NRR & \NRR & \RI & \NRR & \RI & \NRI & \NRR  
 \\ \hline 
 $ 3 $ & \RI & \RI & \RI & \RI & \RR & \NRR & \RI & \NRI & \RI & \NRI  
 \\ \hline 
 $ 4 $ & \RI & \RI & \RI & \RI & \RI & \RI & \NRR & \RI & \NRI & \NRI  
 \\ \hline 
\end{tabular} 
  \label{Tab::E7::REG::P_4} 
 \end{table} 
\item 
  For $\para{P}=\para{P}_{5}$ 
\begin{table}[H] 
 \caption{$\para{P}_{5}$-     Reducibility Points} 
 \begin{tabular}{|c|c|c|c|c|c|c|c|c|c|c|c|c|c|c|c|c|c|c|c} 
  \hline 
\diagbox{$ord\bk{\chi}$}{$s$}  & $-5$  & $-4$  & $-3$  & $-2$  & $-\frac{3}{2}$  & $-1$  & $-\frac{2}{3}$  & $-\frac{1}{2}$  & $-\frac{1}{3}$  & $0$  
 \\ \hline 
 $ 1 $ & \RR & \NRR & \NRR & \NRR & \NRR & \NRR & \NRI & \NRI & \NRI & \NRI  
 \\ \hline 
 $ 2 $ & \RI & \RI & \RI & \NRR & \NRR & \NRR & \RI & \NRI & \RI & \NRI  
 \\ \hline 
 $ 3 $ & \RI & \RI & \RI & \RI & \RI & \RR & \NRI & \RI & \NRI & \NRI  
 \\ \hline 
\end{tabular} 
  \label{Tab::E7::REG::P_5} 
 \end{table} 
\item 
  For $\para{P}=\para{P}_{6}$ 
\begin{table}[H] 
 \caption{$\para{P}_{6}$-     Reducibility Points} 
 \begin{tabular}{|c|c|c|c|c|c|c|c|c|c|c|c|c|c|c|c|c|c|c|c} 
  \hline 
\diagbox{$ord\bk{\chi}$}{$s$}  & $-\frac{13}{2}$  & $-\frac{11}{2}$  & $-\frac{9}{2}$  & $-\frac{7}{2}$  & $-\frac{5}{2}$  & $-2$  & $-\frac{3}{2}$  & $-1$  & $-\frac{1}{2}$  & $0$  
 \\ \hline 
 $ 1 $ & \RR & \NRR & \NRI & \NRR & \NRR & \NRI & \NRI & \NRI & \NRR & \NRI  
 \\ \hline 
 $ 2 $ & \RI & \RI & \RI & \RI & \RR & \NRI & \NRI & \NRI & \NRR & \NRI  
 \\ \hline 
\end{tabular} 
  \label{Tab::E7::REG::P_6} 
 \end{table} 
\item 
  For $\para{P}=\para{P}_{7}$ 
\begin{table}[H] 
 \caption{$\para{P}_{7}$-     Reducibility Points} 
 \begin{tabular}{|c|c|c|c|c|c|c|c|c|c|c|c|c|c|c|c|c|c|c|c} 
  \hline 
\diagbox{$ord\bk{\chi}$}{$s$}  & $-9$  & $-8$  & $-7$  & $-6$  & $-5$  & $-4$  & $-3$  & $-2$  & $-1$  & $0$  
 \\ \hline 
 $ 1 $ & \RR & \NRI & \NRI & \NRI & \NRR & \NRI & \NRI & \NRI & \NRR & \NRI  
 \\ \hline 
 $ 2 $ & \RI & \RI & \RI & \RI & \RI & \RI & \RI & \RI & \RI & \NRR
 \\ \hline 
\end{tabular} 
  \label{Tab::E7::REG::P_7} 
 \end{table} 
\end{itemize}  
\item
With the exception of 
the  cases listed below,
$\pi = \Ind_{M_{i}}^{G}(\Omega_{M_i,s,\chi})$
admits a unique irreducible subrepresentation. In the remaining cases, listed below, the length of the maximal semi-simple subrepresentation is $2$: 
\begin{enumerate}
\item
$\Ind_{M_{2}}^{G}(\Omega_{M_{2},-1,triv})$.
\item
$\Ind_{M_{2}}^{G}(\Omega_{M_{2},0,\chi})$,  where $ord(\chi)=2$.
\item
$\Ind_{M_{2}}^{G}(\Omega_{M_{2},-2,\chi})$,  where $ord(\chi)=2$.
\item
$\Ind_{M_{4}}^{G}(\Omega_{M_{4},0,\chi})$,  where $ord(\chi)=2$.
\item
$\Ind_{M_{4}}^{G}(\Omega_{M_{4},-\frac{1}{2},\chi})$,  where $ord(\chi)=4$.
\item
$\Ind_{M_{5}}^{G}(\Omega_{M_{5},-2,\chi})$,  where $ord(\chi)=2$.
\item
$\Ind_{M_{7}}^{G}(\Omega_{M_{7},0,\chi})$,  where $ord(\chi)=2$.
\end{enumerate}

\end{enumerate}
\end{Thm}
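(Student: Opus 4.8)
The strategy is entirely computational, following the algorithm laid out in \Cref{section:: algo}. The proof divides naturally into the two parts of the statement, and within each part, into a case-by-case analysis over the seven maximal parabolic subgroups $\para{P}_1,\dots,\para{P}_7$ and the finitely many relevant pairs $(s,\chi)$ with $s\leq 0$ and $\chi$ of finite order. First I would dispose of the regular cases: for each $i$, a character $\Omega=\Omega_{M_i,s,\chi}$ gives a regular representation precisely when $\Stab_{\weyl{G}}(\lambda_{a.d.})=\{e\}$ for an antidominant exponent $\lambda_{a.d.}$ in the $\weyl{G}$-orbit of the leading exponent $\jac{M_i}{T}{\Omega}$, and this can be read off from \Cref{formula :: stabilizer}. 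By \Cref{Thm::regularcase red}, a regular $\pi$ is reducible if and only if $\inner{\jac{M_i}{T}{\Omega},\check{\alpha}}\in\{\pm1\}$ for some $\alpha\in\Phi_G^+\setminus\Phi_{M_i}$; enumerating the roots and solving these linear conditions in $(s,\chi)$ is a finite check, and since there are only finitely many non-regular $\Omega$ and finitely many regular reducible ones, the ``red.$^*$'' entries of the tables are completely pinned down. Throughout, the leading exponent is computed via \eqref{formula::leadingexp}, $\jac{M_i}{T}{\Omega}=\Omega\res{T}-\rho_{M_i}$, using the explicit structure of $M_i$ from \Cref{Lemma::structrue::E7}.

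\textbf{The non-regular cases.} For the remaining, non-regular triples $[i,s,k]$ I would run the two complementary tests of \Cref{subsection :: nonreg_red} and \Cref{subsection :: nonreg}. To prove irreducibility, I construct a $\sigma$-dominated sequence $\{f_k\}_{k=0}^n$ by the procedure in steps (1)--(4): start with $f_0=\delta_{\lambda_{a.d.}}$, repeatedly apply the branching rule \eqref{formula::branchingrule} over all triples $(\lambda,L,\tau)\in\Gamma$, and check whether the stable limit $f_n$ equals $f_\pi$; by \Cref{proposition :: branching_rules}, $f_n=f_\pi$ forces $\pi$ to be irreducible. This requires knowing $\jac{L}{T}{\tau}$ for all one-dimensional $\tau$ of all standard Levi subgroups $L$, which is exactly the data compiled in \Cref{App:knowndata}, and it relies on a computer implementation in \emph{Sagemath} to handle $|\weyl{G}|=2{,}903{,}040$. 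To prove reducibility, I apply Tadi\'c's criterion \Cref{Tadic::irr} with $\Pi=\Ind_T^G(\lambda_{a.d.})$ and $\sigma=\Ind_L^G\tau$ for a suitable standard Levi $L$ (often another maximal one): condition (1) is automatic, condition (2) follows from \eqref{formula:: full multiplicty} and \cite[Lemma 3.4]{E6}, and condition (3) is checked by comparing $\mult{\lambda}{\jac{G}{T}{\pi}}$ with $\mult{\lambda}{\jac{G}{T}{\sigma}}$ across the (finite) orbit. Because the set of candidate $\sigma$ is finite and the Geometric Lemma (\Cref{Lemma::geomtric_lema}) lets us compute all relevant Jacquet modules explicitly, this too reduces to a finite search.

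\textbf{Length of the maximal semisimple subrepresentation.} For the reducible $\pi=\Ind_{M_i}^G(\Omega)$, part (2) asserts that the maximal semisimple subrepresentation has length $1$ except in the seven listed exceptions, where it has length $2$. By contragredience it suffices to bound the subrepresentation side. Since $\Real(s_0)\leq0$ may be assumed (the case $\Real(s_0)>0$ gives $\mult{\lambda_0}{\jac{G}{T}{\pi}}=1$ and uniqueness), I would first try to show $\mult{\lambda_0}{\jac{G}{T}{\pi}}=1$ outright, or exhibit a $\sigma$-dominated sequence with $f_n(\lambda_0)=f_\pi(\lambda_0)$, invoking \Cref{Prop::unique irr}; where neither succeeds directly I fall back on \Cref{cor::unique_irr}, replacing $\lambda_0$ by another $\lambda\in\weyl{G}\cdot\lambda_0$ for which the multiplicity identity can be verified. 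For the seven exceptional cases, I would identify the two irreducible subrepresentations explicitly---typically by recognizing $\pi$ as sitting in a short chain of inductions from smaller groups (e.g. $GL_7$, $GE_6$, $GSpin$-type Levis) whose degenerate principal series are already understood from the references cited in the introduction---and show the maximal semisimple subrepresentation is exactly their sum by a multiplicity count at $\lambda_0$.

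\textbf{Main obstacle.} The genuinely hard part is the handful of non-regular cases where neither the branching-rule irreducibility test nor Tadi\'c's reducibility criterion terminates conclusively---the cases deferred to \Cref{section :: result E7 } (such as \Cref{Prop::E7p50O1}). There the $\sigma$-dominated sequence stabilizes at some $f_n$ strictly between $f_0$ and $f_\pi$, which is inconclusive, and one must bring in finer tools: the structure of the Iwahori--Hecke algebra (\Cref{section:: iwahori hecke algebra}) to analyze the constituents at Iwahori level, together with properties of the normalized intertwining operators $N_w(\lambda)$ from \Cref{subsec :: intertwining_operators}---in particular the precise location of their kernels via \ref{properties::intertwining::3} and their holomorphy. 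Pinning down these residual cases, and in particular verifying that exactly the seven listed triples (and no others) have a length-$2$ maximal semisimple subrepresentation, is where the real work lies; everything else is a large but mechanical computation.
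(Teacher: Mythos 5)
Your plan follows the paper's own proof essentially step for step: regular cases via the linear criterion of \Cref{Thm::regularcase red}, non-regular reducibility via \Cref{Tadic::irr} with the comparison data of Tables 8--15, irreducibility via $\sigma$-dominated sequences, uniqueness of the irreducible subrepresentation via \Cref{Prop::unique irr} and \Cref{cor::unique_irr}, and the residual cases via intertwining operators and the Iwahori--Hecke algebra, all implemented by computer. The only point where you are slightly off the paper's actual route is the length-$2$ exceptional cases: rather than comparing with degenerate principal series of smaller groups, the paper decomposes a tempered principal series $\Ind_T^L(\lambda_{a.d.})$ of a non-maximal Levi $L$ by restricting to $L^{der}$, obtains two standard modules, and invokes Langlands' subrepresentation theorem --- but this is a refinement of, not a departure from, the strategy you describe.
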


\begin{Remark}
	We point out that the results depend only on $ord(\chi)$ and not on the choice of $\chi$.
\end{Remark}

\begin{Remark}
	The reducibility of the degenerate principal series of $G$ can be partially studied using the results of \cite[Section 3]{MR1645956}, in the unramified case, and \cite{MR1993361}, in the case of $M=M_7$. The results of both agree with our calculations.
	
	Furthermore, by \cite{MR1993361}, the unique irreducible subrepresentation in the case $[7,-5,1]$ is the minimal representation of $G$. From the proof of Theorem 5.2, it follows that the minimal representation is also the unique irreducible subrepresentation of $[1,-11/2,1]$ and $[2,-5,1]$.
\end{Remark}

\subsection{Proof of 
\Cref{theroem::main} - Part I} \label{subsection::proof}

For most triples $(M_i,s,\chi)$, the proof of the reducibility or irreducibility of $\pi=\Ind_{M_i}^{G}(\Omega_{M_i,s,\chi})$ and the proof that it admits a unique irreducible subrepresentation can be performed using the algorithm outlined in \Cref{section:: algo}. More precisely:
\begin{itemize}
\item
For most proofs of reducibility, we use \Cref{Tadic::irr} with $\pi$ as above, $\Pi$ being $\Ind_{T}^{G}(\jac{M_i}{T}{\Omega_{M_1,s,\chi}})$ and $\sigma$ is given in Tables 8 through 15. A triple $[j,t,k]$ stands for a degenerate principal series $\sigma=\Ind_{M_j}^{G}(\Omega_{M_j,t,\chi})$, where $ord(\chi)=k$, while a triple $[(j_1,j_2),(s_1,s_2),(k_1,k_2)]$ stands for $\Ind_{M}^{G}(\Omega)$, where $M=M_{\Delta_G\setminus\{\alpha_{j_1},\alpha_{j_2}}\}$ and
\[ \Omega= (s_1 + \chi_1) \circ \fun{j_1} + (s_2 + \chi_2) \circ \fun{j_2}, \quad \text{where} \: \:  ord(\chi_i)=j_{i}.\]
\begin{itemize} 
\item 
  For $\para{P}=\para{P}_{1}$ 
\begin{table}[H] 
 \caption{Data for the proof of the reducibility of $\Ind_{M_  1}^{G}(\Omega_{M_{1,s,\chi}})$} 
 \begin{tabular}{|c|cH|} 
  \hline 
\diagbox{$s$}{$ord\bk{\chi}$}  & $1$  & $2$  
 \\ \hline 
 $ -\frac{11}{2} $ & $ \left[2, -5, 1\right] $&  \notableentry  
 \\ \hline 
 $ -\frac{7}{2} $ & $ \left[6, -\frac{7}{2}, 1\right] $&  \notableentry  
 \\ \hline 
 $ -\frac{1}{2} $ & $ \left[\left(2, 7\right), \left(1, 3\right), \left(0, 0\right)\right] $&  \notableentry  
 \\ \hline 
\end{tabular} 
  \label{Tab::E7::COMP::P_1} 
 \end{table} 
\item 
  For $\para{P}=\para{P}_{2}$ 
\begin{table}[H] 
 \caption{Data for the proof of the reducibility of $\Ind_{M_  2}^{G}(\Omega_{M_{2,s,\chi}})$} 
 \begin{tabular}{|c|c|c|} 
  \hline 
\diagbox{$s$}{$ord\bk{\chi}$}  & $1$  & $2$  
 \\ \hline 
 $ -5 $ & $ \left[1, -\frac{11}{2}, 1\right] $&  \notableentry  
 \\ \hline 
 $ -4 $ & $ \left[7, -2, 1\right] $&  \notableentry  
 \\ \hline 
 $ -3 $ & $ \left[1, -\frac{3}{2}, 1\right] $&  \notableentry  
 \\ \hline 
 $ -2 $ & $ \left[\left(6, 7\right), \left(2, 4\right), \left(0, 0\right)\right] $& $ \left[5, -2, 2\right] $ 
 \\ \hline 
 $ -1 $ & $ \left[3, -\frac{3}{2}, 1\right] $&  \notableentry  
 \\ \hline 
\end{tabular} 
  \label{Tab::E7::COMP::P_2} 
 \end{table} 
\item 
  For $\para{P}=\para{P}_{3}$ 
\begin{table}[H] 
 \caption{Data for the proof of the reducibility of $\Ind_{M_  3}^{G}(\Omega_{M_{3,s,\chi}})$} 
 \begin{tabular}{|c|c|cH|} 
  \hline 
\diagbox{$s$}{$ord\bk{\chi}$}  & $1$  & $2$  & $3$  
 \\ \hline 
 $ -\frac{9}{2} $ & $ \left[1, -\frac{13}{2}, 1\right] $&  \notableentry &  \notableentry  
 \\ \hline 
 $ -\frac{7}{2} $ & $ \left[7, -3, 1\right] $&  \notableentry &  \notableentry  
 \\ \hline 
 $ -\frac{5}{2} $ & $ \left[1, -\frac{1}{2}, 1\right] $&  \notableentry &  \notableentry  
 \\ \hline 
 $ -\frac{3}{2} $ & $ \left[2, -1, 1\right] $& $ \left[6, -\frac{1}{2}, 2\right] $&  \notableentry  
 \\ \hline 
 $ -\frac{1}{2} $ & $ \left[\left(2, 3\right), \left(1, 2\right), \left(0, 0\right)\right] $& $ \left[\left(2, 3\right), \left(1, 2\right), \left(0, 1\right)\right] $&  \notableentry  
 \\ \hline 
\end{tabular} 
  \label{Tab::E7::COMP::P_3} 
 \end{table} 
\item 
  For $\para{P}=\para{P}_{4}$ 
\begin{table}[H] 
 \caption{Data for the proof of the reducibility of $\Ind_{M_  4}^{G}(\Omega_{M_{4,s,\chi}})$ - part 1} 
 \begin{tabular}{|c|c|c|} 
  \hline 
\diagbox{$s$}{$ord\bk{\chi}$}  & $1$  & $2$   
 \\ \hline 
 $ -3 $ & $ \left[1, -\frac{11}{2}, 1\right] $&  \notableentry 
 \\ \hline 
 $ -2 $ & $ \left[1, -\frac{1}{2}, 1\right] $&  \notableentry 
 \\ \hline 
 $ -\frac{3}{2} $ & $ \left[6, -1, 1\right] $& $ \left[6, -1, 2\right] $  
 \\ \hline 
 $ -1 $ & $ \left[3, -\frac{1}{2}, 1\right] $& $ \left[3, -\frac{1}{2}, 2\right] $  
 \\ \hline 
 $ -\frac{2}{3} $ & $ \left[5, -\frac{1}{3}, 1\right] $&  \notableentry 
 \\ \hline 
 $ -\frac{1}{2} $ & $ \left[\left(2, 6\right), \left(3, \frac{5}{2}\right), \left(0, 0\right)\right] $& $ \left[\left(5, 7\right), \left(\frac{7}{2}, \frac{1}{2}\right), \left(1, 1\right)\right] $
 \\ \hline 
 $ 0 $ &  \notableentry & $ \left[\left(1, 4\right), \left(1, 2\right), \left(1, 0\right)\right] $ 
 \\ \hline 
\end{tabular} 
  \label{Tab::E7::COMP::P_4Part1} 
 \end{table}
 
 \begin{table}[H] 
  \caption{Data for the proof of the reducibility of $\Ind_{M_  4}^{G}(\Omega_{M_{4,s,\chi}})$ - part 2} 
  \begin{tabular}{|c|c|c|c|c|} 
   \hline 
 \diagbox{$s$}{$ord\bk{\chi}$}   & $3$  & $4$  
  \\ \hline 
  $ -\frac{2}{3} $  & $ \left[\left(4, 7\right), \left(\frac{10}{3}, -\frac{7}{3}\right), \left(1, 2\right)\right] $&  \notableentry  
  \\ \hline 
  $ -\frac{1}{2} $ &  \notableentry & $ \left[\left(2, 5\right), \left(\frac{1}{2}, \frac{5}{2}\right), \left(1, 1\right)\right] $ 
  \\ \hline 
 \end{tabular} 
   \label{Tab::E7::COMP::P_4Part2} 
  \end{table}
  
\item 
  For $\para{P}=\para{P}_{5}$ 
\begin{table}[H] 
 \caption{Data for the proof of the reducibility of $\Ind_{M_  5}^{G}(\Omega_{M_{5,s,\chi}})$} 
 \begin{tabular}{|c|c|cH|} 
  \hline 
\diagbox{$s$}{$ord\bk{\chi}$}  & $1$  & $2$  & $3$  
 \\ \hline 
 $ -4 $ & $ \left[7, -6, 1\right] $&  \notableentry &  \notableentry  
 \\ \hline 
 $ -3 $ & $ \left[1, -\frac{7}{2}, 1\right] $&  \notableentry &  \notableentry  
 \\ \hline 
 $ -2 $ & $ \left[2, -2, 1\right] $& $ \left[2, -2, 2\right] $&  \notableentry  
 \\ \hline 
 $ -\frac{3}{2} $ & $ \left[2, -\frac{1}{2}, 1\right] $& $ \left[2, -\frac{1}{2}, 2\right] $&  \notableentry  
 \\ \hline 
 $ -1 $ & $ \left[3, -\frac{1}{2}, 1\right] $& $ \left[\left(1, 2\right), \left(3, 3\right), \left(0, 1\right)\right] $&  \notableentry  
 \\ \hline 
\end{tabular} 
  \label{Tab::E7::COMP::P_5} 
 \end{table} 
\item 
  For $\para{P}=\para{P}_{6}$ 
\begin{table}[H] 
 \caption{Data for the proof of the reducibility of $\Ind_{M_  6}^{G}(\Omega_{M_{6,s,\chi}})$} 
 \begin{tabular}{|c|cH|} 
  \hline 
\diagbox{$s$}{$ord\bk{\chi}$}  & $1$  & $2$  
 \\ \hline 
 $ -\frac{11}{2} $ & $ \left[7, -7, 1\right] $&  \notableentry  
 \\ \hline 
 $ -\frac{7}{2} $ & $ \left[1, -\frac{7}{2}, 1\right] $&  \notableentry  
 \\ \hline 
 $ -\frac{5}{2} $ & $ \left[1, -\frac{1}{2}, 1\right] $&  \notableentry  
 \\ \hline 
 $ -\frac{1}{2} $ & $ \left[2, -1, 1\right] $& $ \left[3, -\frac{3}{2}, 2\right] $ 
 \\ \hline 
\end{tabular} 
  \label{Tab::E7::COMP::P_6} 
 \end{table} 
\item 
  For $\para{P}=\para{P}_{7}$ 
\begin{table}[H] 
 \caption{Data for the proof of the reducibility of $\Ind_{M_  7}^{G}(\Omega_{M_{7,s,\chi}})$} 
 \begin{tabular}{|c|cH|} 
  \hline 
\diagbox{$s$}{$ord\bk{\chi}$}  & $1$  & $2$  
 \\ \hline 
 $ -5 $ & $ \left[1, -\frac{11}{2}, 1\right] $&  \notableentry  
 \\ \hline 
 $ -1 $ & $ \left[1, -\frac{7}{2}, 1\right] $&  \notableentry  
 \\ \hline 
\end{tabular} 
  \label{Tab::E7::COMP::P_7} 
 \end{table} 
\end{itemize}

\item
The irreducibility of $\pi$ is proven, in most cases, using the algorithm in \Cref{subsection :: nonreg}. We start with an anti-dominant exponent $\lambda_{a.d}$ and a function $f_0\in \mathcal{S}$ given by $f_{0}= \delta_{\lambda_{a.d}}$ and an irreducible subquotient $\sigma$ of $\pi$ such that $\lambda_{a.d} \leq \jac{G}{T}{\sigma}$. Using the branching rules in \Cref{App:knowndata}, we  construct a $\sigma$-dominated sequence in $\mathcal{S}$ such that, at some point, $f_n=f_\pi$.
\item
For most reducible $\pi$ in the list which admits a unique irreducible subrepresentation, this can be proven using 
the algorithm described in \Cref{subsection::length}.

We point out that, in the case $[4, 0, 2]$, it follows from \Cref{Lemma::geomtric_lema} and \Cref{eq:: num_irr} that, since $\Ind_{M_  4}^{G}(\Omega_{M_{4,0,\chi}})$ is reducible and semi-simple such that $\jac{G}{T}{\Ind_{M_  4}^{G}(\Omega_{M_{4,0,\chi}})}$ contains the initial exponent with multiplicity $2$, then it is of length $2$.
\item
For a number of cases, listed below, the algorithm of \Cref{section:: algo}  was inconclusive. These cases are treated separately, with different methods, in \Cref{section :: result E7 }.
\begin{itemize}
\item
Irreducible points: $[4, 0, 1]$, $[5, 0, 1]$, $[5, 0, 2]$.
\item
Unique irreducible subrepresentation: $[4, -\frac{1}{2},4]$, $[4, -1, 1]$ and $[5, -1, 1]$.
\item
Maximal semi-simple subrepresentation of length $2$:
$[2, 0, 2]$, $[2, -1, 1]$, $[2, -2, 2]$ and $[5, -2, 2]$, $[7, 0, 2]$.
\end{itemize}
\end{itemize}

\subsection{Proof of \Cref{theroem::main} - Part II, the Remaining Cases} \label{section :: result E7 }
In this subsection we deal with the remaining cases in which the algorithm in \Cref{section:: algo} was inconclusive.
\begin{Prop}\label{Prop::E7p50O1}
The representation $\pi = \Ind_{M_5}^{G}(\Omega_{0})$ is irreducible.
\end{Prop}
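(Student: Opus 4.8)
The plan is to deduce the irreducibility of $\pi$ from two ingredients: the semisimplicity of $\pi$, which comes for free, and a branching rule calculation which, while \emph{not} strong enough to recover the whole module $\jac{G}{T}{\pi}$, does recover the multiplicity of the leading exponent.

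First I would record that $\pi=\Ind_{M_5}^{G}(\Omega_0)$ is \emph{unitary}: $\Omega_0$ is the trivial character of $M_5$, hence unitary, and normalized parabolic induction carries unitary representations to unitary representations. Being unitary and admissible (hence of finite length), $\pi$ is completely reducible. Consequently, in order to prove that $\pi$ is irreducible it is enough to prove that $\pi$ admits a unique irreducible subrepresentation, since a finite-length semisimple representation with a unique irreducible subrepresentation has exactly one constituent.

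Next I would compute $\jac{G}{T}{\pi}$ via the Geometric Lemma (\Cref{Lemma::geomtric_lema}), isolate the leading exponent $\lambda_0=\jac{M_5}{T}{\Omega_0}$ — which, since $\Omega_0$ is trivial, equals $-\rho_{M_5}$ and is already anti-dominant — and record that $\mult{\lambda_0}{\jac{G}{T}{\pi}}=|\Stab_{\weyl{G}}(\lambda_0)|$, a number exceeding $1$ because $\pi$ is non-regular. In particular the irreducibility algorithm of \Cref{subsection :: nonreg}, run from $f_0=\delta_{\lambda_0}$ together with an irreducible constituent $\sigma$ of $\pi$ satisfying $\lambda_0\leq\jac{G}{T}{\sigma}$ (such a $\sigma$ exists because $\mult{\lambda_0}{\jac{G}{T}{\pi}}=\mult{\lambda_0}{\jac{G}{T}{\Ind_T^G(\lambda_0)}}$), does \emph{not} terminate with $f_n=f_\pi$; this is precisely why the algorithm of \Cref{section:: algo} is inconclusive here. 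The key point, however, is that the same construction, fed the branching rules collected in \Cref{App:knowndata}, does yield a $\sigma$-dominated sequence $\set{f_k}_{k=0}^{n}$ with $f_n(\lambda_0)=f_\pi(\lambda_0)$. Since $f_n\leq f_\sigma\leq f_\pi$, this forces $\mult{\lambda_0}{\jac{G}{T}{\sigma}}=\mult{\lambda_0}{\jac{G}{T}{\pi}}$, so by \Cref{Prop::unique irr} the representation $\pi$ has a unique irreducible subrepresentation; combined with the semisimplicity established above, $\pi=\sigma$ is irreducible.

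The main obstacle is the final assertion of the previous step: one must exhibit an explicit chain of triples $(\lambda,L,\tau)\in\Gamma$ — in practice a comparison with degenerate principal series, or with representations induced from smaller, non-maximal Levi subgroups whose Jacquet modules are tabulated in \Cref{App:knowndata} — whose iterated application raises the value at $\lambda_0$ from $1$ up to the full multiplicity $|\Stab_{\weyl{G}}(\lambda_0)|$, even though no such chain can reconstruct all of $f_\pi$. This is a finite but delicate bookkeeping computation, and the explicit sequence realizing it is the real content of the proof.
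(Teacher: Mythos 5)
Your overall skeleton --- unitarity gives semisimplicity, so it suffices to produce a unique irreducible subrepresentation, which follows from \ref{Prop::unique::2} once some constituent $\sigma$ is shown to carry the full multiplicity of the leading exponent $\lambda_0$ --- is exactly the paper's strategy. However, two of your supporting claims are wrong, and the second is where the actual content of the proof lives.

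First, $\lambda_0=\jac{M_5}{T}{\Omega_0}=-\rho_{M_5}$ is \emph{not} anti-dominant: its pairing with $\check{\alpha_5}$ is $+4$ (the paper records $\lambda_0=\esevenchar{-1}{-1}{-1}{-1}{4}{-1}{-1}$), and the anti-dominant element of the orbit is the different exponent $\lambda_{a.d}=\esevenchar{0}{0}{0}{-1}{0}{0}{0}$. Correspondingly $\mult{\lambda_0}{\jac{G}{T}{\pi}}=2$, whereas $|\Stab_{\weyl{G}}(\lambda_0)|=|\Stab_{\weyl{G}}(\lambda_{a.d})|=288$; the identity $\mult{\lambda}{\jac{G}{T}{\pi}}=|\Stab_{\weyl{G}}(\lambda)|$ holds only at the anti-dominant exponent. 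Had $\lambda_0$ really been anti-dominant with full stabilizer multiplicity, uniqueness of the irreducible subrepresentation would be immediate from \Cref{Lemma::unique anti} and there would be nothing to prove; the entire difficulty of this case is that $\lambda_0\neq\lambda_{a.d}$ and one must transport multiplicity information from $\lambda_{a.d}$ to $\lambda_0$.

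Second, and more seriously, you assert that the construction of \Cref{subsection :: nonreg}, started from $f_0=\delta_{\lambda_{a.d}}$, yields a $\sigma$-dominated sequence with $f_n(\lambda_0)=f_\pi(\lambda_0)=2$. That is precisely what fails here --- it is why $[5,0,1]$ appears on the list of cases the algorithm could not settle. The paper's proof requires an identification step that is not a branching rule: starting from $288\times\coset{\lambda_{a.d}}\leq\jac{G}{T}{\sigma}$ one shows $\mult{\lambda_1}{\jac{G}{T}{\sigma}}=216=\mult{\lambda_1}{\jac{G}{T}{\pi}}$ for an intermediate exponent $\lambda_1$; independently, any constituent $\tau$ with $\lambda_2\leq\jac{G}{T}{\tau}$, for a second exponent $\lambda_2$, satisfies $\mult{\lambda_1}{\jac{G}{T}{\tau}}\geq 12$, which forces $\tau=\sigma$ and hence $\mult{\lambda_2}{\jac{G}{T}{\sigma}}=\mult{\lambda_2}{\jac{G}{T}{\pi}}=72$; only from $72\times\coset{\lambda_2}$ does a branching chain finally reach $2\times\coset{\lambda_0}$. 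Without this merging of two separately grown pieces of $f_\sigma$, the ``finite but delicate bookkeeping'' you defer to cannot be completed, so the proposal as written has a genuine gap.
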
 
\begin{proof}
We begin by noting that $\pi$ is unitary and hence semi-simple. Therefore, in order to show irreducibility, it is enough to show that  it admits a unique irreducible subrepresentation. For this purpose, we fix the following notations 
\[
\begin{matrix}
\lambda_{a.d}= \esevenchar{0}{0}{0}{-1}{0}{0}{0}
& \lambda_0= \esevenchar{-1}{-1}{-1}{-1}{4}{-1}{-1}
\\
\lambda_{1} =  \esevenchar{0}{-1}{-1}{1}{-1}{0}{0}
& \lambda_{2} =\esevenchar{1}{-1}{0}{0}{-1}{0}{0} .
\end{matrix}
\]

and proceed as  follows:

\begin{itemize}
\item
Let $\sigma$ be an irreducible constituent  of $\pi$ having $\mult{\lambda_{a.d}}{\jac{G}{T}{\sigma}} \neq 0$. Using a sequence of branching rules we show that 
\[\mult{\lambda_1}{\jac{G}{T}{\sigma}} =  \mult{\lambda_1}{\jac{G}{T}{\pi}}=216.\]
This calculation is preformed in \Cref{tab::Prop::E7P5m0O1::1} below.
\item
Let $\tau$ be an irreducible constituent  of $\pi$ having $\lambda_{2}\leq \jac{G}{T}{\tau}.$
Applying a sequence of branching rules, detailed in \Cref{tab::Prop::E7P5m0O1::2} below, yields that 
\[\mult{\lambda_1}{\jac{G}{T}{\tau}}\geq 12.\]
It follows that $\tau =\sigma$ is the unique irreducible subquotient of $\pi$ having \[\mult{\lambda_2}{\jac{G}{T}{\tau}} \neq 0.\]
In particular, 
\[\mult{\lambda_2}{\jac{G}{T}{\sigma}} = \mult{\lambda_2}{\jac{G}{T}{\pi}}=72.\]

\item
Applying a sequence of branching rules, summarized in \Cref{tab::Prop::E7P5m0O1::3} below, it follows that
\[\mult{\lambda_0}{\jac{G}{T}{\sigma}} = \mult{\lambda_0}{\jac{G}{T}{\pi}}=2.\]
\end{itemize}
Therefore, by \ref{Prop::unique::2}, $\pi$ admits a unique irreducible subrepresentation, and hence, $\pi$ is irreducible.

We explain how the following tables should be read. Each line represents a branching rule of the type 
\[k \times \coset{\lambda} \leq \jac{G}{T}{\tau}  \Rightarrow l \times \coset{\mu} \leq \jac{G}{T}{\tau}.\]

The first two columns are $\lambda$ and $k$, then the third lists the type of the rule, as it is labeled in  \Cref{App:knowndata}, the forth is the Levi subgroup with respect to which it is applied. For shorthand,  we write $\set{b_1,\dots b_d}$ for the Levi subgroup $M$ which has $\Delta_{M} = \set{\alpha_j \: : \:  j \in \set{b_1,\dots b_d} } $. The fifth column lists the  Weyl element which is  applied. The last two columns list $\mu$ and $l$.

\begin{longtable}[H]{|c|c|c|c|c|c|c|c|c|c|} 
  \hline
  \multicolumn{10}{|c|}{$ \sigma $} \\ 
  \hline 
  { $\lambda$}  &
  {$k$}  &
  { Rule}  &
  {Levi} &
  { Weyl word  }  &
  {$\mu$}  &
  { $l$ }  \\
   \hline
   $\lambda_{a.d}$ & $1$ & OR  &  & & $\lambda_{a.d} = \esevenchar{0}{0}{0}{-1}{0}{0}{0}$ & $288$  \\  
   \hline
   $\lambda_{a.d}$ & $288$ & $A_4$ 
   &
    $\set{4,5,6,7}$
     & $\s{\alpha_4}$  & $\lambda_{1} =  \esevenchar{0}{-1}{-1}{1}{-1}{0}{0}$ & $216$  \\  
 \hline
\caption{\Cref{Prop::E7p50O1}, part 1}
\label{tab::Prop::E7P5m0O1::1}
\end{longtable}

\begin{longtable}[H]{|c|c|c|c|c|c|c|c|c|c|} 
  \hline
  \multicolumn{10}{|c|}{$ \tau $} \\ 
  \hline 
  { $\lambda$}  &
  {$k$}  &
  { Rule}  &
  {Levi} &
  { Weyl word  }  &
  {$\mu$}  &
  { $l$ }  \\
   \hline
   $\lambda_{2}$ & $1$ & OR  &  & & $\lambda_{2} = \esevenchar{1}{-1}{0}{0}{-1}{0}{0}$ & $36$  \\  
   \hline
   $\lambda_{2}$ & $36$ & $A_3$ 
   &
    $\set{1,3,4}$
     & $\s{\alpha_3}\s{\alpha_1}$  & $\lambda_{1} =  \esevenchar{0}{-1}{-1}{1}{-1}{0}{0}$ & $12$  \\  
 \hline
\caption{\Cref{Prop::E7p50O1}, part 2}
\label{tab::Prop::E7P5m0O1::2}
\end{longtable}

\begin{longtable}{|c|c|c|c|c|c|c|c|c|c|} 
  \hline
  \multicolumn{10}{|c|}{$ \sigma $} \\ 
  \hline 
  { $\lambda$}  &
  {$k$}  &
  { Rule}  &
  {Levi} &
  { Weyl word  }  &
  {$\mu$}  &
  { $l$ }  \\
\hline
 $\lambda_{2}$ &  $72$ & $A_3$ & $\set{2,3,4}$& $\s{\alpha_4}\s{\alpha_2}$  & $\lambda_{3} =\esevenchar{1}{0}{-1}{1}{-2}{0}{0}$ & $24$ \\
\hline
 $\lambda_{3}$ & $24$ & $A_1$  & & $\s{\alpha_7}\s{\alpha_6}\s{\alpha_5}$   & $\lambda_{4} =\esevenchar{1}{0}{-1}{-1}{0}{0}{2}$ & $24$ \\
\hline
 $\lambda_{4}$ &  $24$ & $A_3$ &$\set{4,5,6}$   & $\s{\alpha_6}\s{\alpha_4}$  & $\lambda_{5} =\esevenchar{1}{1}{-2}{0}{-1}{0}{2}$ & $8$ \\
\hline

 $\lambda_{5}$ &  $8$ &  $A_1$  & & $\s{\alpha_4}\s{\alpha_3}\s{\alpha_6}\s{\alpha_7}$  & $\lambda_{6} =\esevenchar{-1}{-1}{0}{2}{-1}{-2}{0}$ & $8$ \\
\hline

 $\lambda_{6}$ &  $8$ & $A_2$ & $\set{1,3}$  &  $\s{\alpha_1}$  & $\lambda_{7} =\esevenchar{1}{-1}{-1}{2}{-1}{-2}{0}$ & $4$ \\
\hline
 $\lambda_{7}$ & $4$ & $A_1$   &  & $\s{\alpha_6}\s{\alpha_5}\s{\alpha_7}\s{\alpha_6} $   & $\lambda_{8} =\esevenchar{1}{-1}{-1}{-1}{0}{3}{-1}$ & $4$ \\
\hline

 $\lambda_{8}$ & $4$ &  $A_2$  & $\set{4,5}$  & $\s{\alpha_4}$   & $\lambda_{9} =\esevenchar{1}{-2}{-2}{1}{-1}{3}{-1}$ & $2$ \\
\hline
 $\lambda_{9}$ & $2$ & $A_1$   & & $\s{\alpha_5}\s{\alpha_4}\s{\alpha_2}\s{\alpha_3} $   & $\lambda_{0} =\esevenchar{-1}{-1}{-1}{-}{4}{-1}{-1}$ & $2$ \\
\hline
\caption{\Cref{Prop::E7p50O1}, part 3}
\label{tab::Prop::E7P5m0O1::3}
\end{longtable}

\end{proof}     

\begin{Prop}
The following representations admit a unique irreducible subrepresentation:
\begin{enumerate}
\item
$\Ind_{M_4}^{G}(\Omega_{M_{4},-1})$.
\item
$\Ind_{\para{M}_5}^{G}(\Omega_{M_{5},-1})$.
\end{enumerate} 
\end{Prop}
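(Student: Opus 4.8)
The plan is to follow the two‑stage strategy of the proof of \Cref{Prop::E7p50O1}, now for the triples $[4,-1,1]$ and $[5,-1,1]$. Write $\pi=\Ind_{M_i}^{G}(\Omega_{M_i,-1})$ with $i=4,5$ and trivial central character; the reducibility of $\pi$ is already settled in \Cref{subsection::proof}, so the only new assertion is that $\pi$ has a \emph{unique} irreducible subrepresentation. By \ref{Prop::unique::2} it suffices to exhibit an irreducible constituent $\sigma$ of $\pi$ with
\[
\mult{\lambda_0}{\jac{G}{T}{\sigma}}=\mult{\lambda_0}{\jac{G}{T}{\pi}},
\qquad \lambda_0=\jac{M_i}{T}{\Omega_{M_i,-1}}.
\]
The reason these cases fell outside \Cref{subsection::proof} is that the $\sigma$-dominated sequence of \Cref{subsection :: nonreg} started from $\delta_{\lambda_{a.d.}}$ stops short of $f_\pi$ on the coordinate $\lambda_0$; so the argument must be routed through auxiliary exponents.

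Concretely, for each $i\in\{4,5\}$ I would: \textbf{(i)} compute $\lambda_0$, an anti‑dominant representative $\lambda_{a.d.}\in\weyl{G}\cdot\lambda_0$, and, applying the geometric lemma (\Cref{Lemma::geomtric_lema}) to $\Ind_T^G(\lambda_{a.d.})$, the multiplicities $\mult{\lambda_{a.d.}}{\jac{G}{T}{\pi}}=|\Stab_{\weyl{G}}(\lambda_{a.d.})|$ and $\mult{\lambda_0}{\jac{G}{T}{\pi}}$; \textbf{(ii)} let $\sigma$ be any constituent of $\pi$ with $\lambda_{a.d.}\leq\jac{G}{T}{\sigma}$, and run a branching‑rule chain from $\delta_{\lambda_{a.d.}}$ (each step recorded by a Levi, a Weyl word, and a rule from \Cref{App:knowndata}, in the format of the tables attached to \Cref{Prop::E7p50O1}) to reach an intermediate exponent $\lambda_\ast$ with $\mult{\lambda_\ast}{\jac{G}{T}{\sigma}}=\mult{\lambda_\ast}{\jac{G}{T}{\pi}}$; by exactness of $\jac{G}{T}{\cdot}$ this forces $\sigma$ to occur in $\pi$ with multiplicity one and to be the \emph{only} constituent of $\pi$ whose Jacquet module contains $\lambda_\ast$; \textbf{(iii)} pick a second auxiliary exponent $\lambda_{\ast\ast}$ and show, by a branching chain from $\delta_{\lambda_{\ast\ast}}$, that every constituent $\tau$ of $\pi$ with $\lambda_{\ast\ast}\leq\jac{G}{T}{\tau}$ has $\mult{\lambda_\ast}{\jac{G}{T}{\tau}}>0$, hence $\tau=\sigma$; this yields $\mult{\lambda_{\ast\ast}}{\jac{G}{T}{\sigma}}=\mult{\lambda_{\ast\ast}}{\jac{G}{T}{\pi}}$; \textbf{(iv)} run one further branching chain from $\delta_{\lambda_{\ast\ast}}$ to conclude $\mult{\lambda_0}{\jac{G}{T}{\sigma}}=\mult{\lambda_0}{\jac{G}{T}{\pi}}$ and invoke \ref{Prop::unique::2}.

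If step (iv) resists this — i.e. if the chain from $\lambda_{\ast\ast}$ still fails to reach $\lambda_0$ at full multiplicity — the fallback is \Cref{cor::unique_irr}: using induction in stages together with the normalized intertwining operators and their kernels (properties \ref{properties::intertwining::1}--\ref{properties::intertwining::3}), embed $\pi\hookrightarrow\Ind_T^G(\lambda)$ for a suitable $\lambda\in\weyl{G}\cdot\lambda_0$ chosen so that the branching machinery does attain $\mult{\lambda}{\jac{G}{T}{\sigma}}=\mult{\lambda}{\jac{G}{T}{\pi}}$, and conclude via that corollary. I expect the real obstacle to be purely combinatorial: locating the auxiliary exponents $\lambda_\ast,\lambda_{\ast\ast}$ (or a target $\lambda$) and the explicit Weyl words for which the uniqueness step (iii) actually closes using the finitely many rules in \Cref{App:knowndata} — there is no conceptual shortcut, and this search is precisely what the \emph{Sagemath} implementation carries out. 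A preliminary sanity check worth making is that, unlike the $s=0$ cases, these $\pi$ need not be semisimple, so "unique irreducible subrepresentation" is here a substantive statement rather than a consequence of semisimplicity.
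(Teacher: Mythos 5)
Your primary plan --- running the branching-rule machinery of \Cref{subsection :: nonreg} through auxiliary exponents until $f_n(\lambda_0)=f_\pi(\lambda_0)$ and invoking \ref{Prop::unique::2} --- is precisely the algorithm that the paper reports as \emph{inconclusive} for $[4,-1,1]$ and $[5,-1,1]$; that is why these two cases were moved to \Cref{section :: result E7 } in the first place. So steps (i)--(iv) as you describe them will not close: no chain of rules from \Cref{App:knowndata} reaches $\lambda_0$ at full multiplicity here, and this is not merely a matter of finding cleverer intermediate exponents.

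Your fallback, however, is essentially the paper's actual proof, and the real content lies in making it precise. For $\Ind_{M_4}^{G}(\Omega_{M_4,-1})$ the paper uses induction in stages through the Levi $L=M_{\set{\alpha_2,\alpha_3,\alpha_4,\alpha_5,\alpha_6}}$ of type $D_5$, rewrites $\pi$ as a subrepresentation of $\Pi=\Ind_L^G\bk{\Ind_M^L(\Omega_{M,0}^L)\otimes\cdots}$, and uses the irreducibility of the inner $D_5$ degenerate principal series (\Cref{D5:reseults}) to show that the intertwining operator $\M_u(\lambda_0)$ restricted to $\Pi$ is injective, giving $\pi\hookrightarrow\Pi\hookrightarrow\Ind_T^G(\lambda_{a.d})$; Langlands' theorem then finishes without any branching computation at all. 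For $\Ind_{M_5}^{G}(\Omega_{M_5,-1})$ the paper does combine both ingredients, much as you sketch: a single branching step (rule \eqref{Eq:: genan} applied to $M_{\set{\alpha_5,\alpha_6,\alpha_7}}$) shows the constituent $\sigma$ containing $\lambda_{a.d}$ is the \emph{unique} one with $\mult{\lambda_1}{\jac{G}{T}{\sigma}}\neq 0$ where $\lambda_1=\s{\alpha_7}\lambda_{a.d}$; then induction in stages through an $E_6$ Levi together with \Cref{E6:reseults} gives $\pi\hookrightarrow\Ind_T^G(\lambda_1)$, and \Cref{cor::unique_irr} concludes. The piece your proposal leaves unspecified --- and which carries the actual mathematical weight --- is the mechanism for producing the embedding $\pi\hookrightarrow\Ind_T^G(\lambda)$: it rests on factoring through an intermediate Levi whose own degenerate principal series is known to be irreducible, so that the relevant intertwining operator restricted to the induced representation is injective by \Cref{eq ::levi_intertwining}. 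Your closing sanity check (these $\pi$ are not semisimple, so the statement is substantive) is correct.
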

\begin{proof}
We start by recalling more properties of the standard intertwining operators.
Let $M$ be a standard Levi subgroup of $G$. To
$w \in \weyl{M}$ we may associate an intertwining operator 
\[\M_{w}^{M} \: : \:  \Ind_{T}^{M}(\lambda) \rightarrow \Ind_{T}^{M}(w \cdot \lambda)\]
on the unramified principal series $\Ind_{T}^{M}(\lambda)$ in a similar fashion to  that of \Cref{subsec :: intertwining_operators}. By \Cref{eq::intertwing} and the induction in stages $\Ind_{T}^{G}(\lambda) =\Ind_{M}^{G}\bk{\Ind_{T}^{M}(\lambda)}$, it follows that:
\begin{equation}\label{eq ::levi_intertwining}
\M_{w}(f)(g) = \M_{w}^{M}(f(g)) \quad \forall f \in \Ind_{M}^{G}\bk{ \Ind_{T}^{M}(\lambda)}, \quad g \in G.
\end{equation}

Given an irreducible subrepresentation of $\sigma$ of $\Ind_{T}^{M}(\lambda)$ such that $\M_{w}^{M}\res{\sigma}$ is injective , it follows from \Cref{eq ::levi_intertwining} that $\M_{w}\res{\Ind_{M}^{G}(\sigma)}$ is also injective.    
\begin{enumerate}
\item
Denote $M = M_{\set{\alpha_2,\alpha_3,\alpha_5,\alpha_6}}$ and $L = M_{\set{\alpha_2,\alpha_3,\alpha_5,\alpha_4,\alpha_6}}$ and let $Triv_{M}$ be the trivial representation of $M$.
 By induction in stages, one has 
\[
\begin{split}
\Ind_{M_4}^{G}(\Omega_{M_{4},-1}) &\hookrightarrow \Ind_{M_4}^{G}  ( \Ind_{M}^{M_4}(Triv_{M})\otimes  \Omega_{M_{4},-1}) \\
&\hookrightarrow \Ind_{M}^{G}(-\fun{_{4}} \underbrace{-\frac{1}{2}( 3\fun{1}+ 6\fun{4}+4\fun{7})}_{\rho_M} +\frac{1}{2} \underbrace{(8\fun{4})}_{\rho_{M_4}}) \simeq \Ind_{M}^{G}(-\frac{3}{2}\fun{1}-2\fun{7})\\
&
\simeq 
\Ind_{L}^{G}( \Ind_{M}^{L}(\Omega_{M,0}^{L}) \otimes
-\frac{3}{2}\fun{1}-2\fun{7}). 
\end{split}
\]
Denote the right hand side by $\Pi$. 
Let $u= \s{\alpha_4 }\s{\alpha_2 }\s{\alpha_3 }\s{\alpha_5 }\s{\alpha_4 }$. By \Cref{D5:reseults} (after relabeling), the operator $\M_{u}(\lambda_{0})$ induces an injection,
\[
\Pi \hookrightarrow \Ind_{T}^{G}(\lambda_{a.d}),
\]
where $\lambda_{a.d}$ is the anti-dominant exponent of $\pi$ and $\lambda_{0}=\jac{M_4}{T}{\Omega_{M_4,-1}}$. By the Langlands' unique irreducible  subrepresentation theorem \cite[Section 1]{MR1665057}, $\Pi$ admits a unique irreducible subrepresentation. Since $\pi \hookrightarrow \Pi$, so does $\pi$.
\item
In order to show that $\pi =\Ind_{M_5}^{G}(\Omega_{M_{5},-1})$ admits a unique irreducible subrepresentation, we proceed as follows.
\begin{itemize}
\item
Let $\lambda_{a.d}$ be the anti-dominant exponent of $\pi$. Set $\sigma$ to be the unique irreducible subquotient of $\pi$ having $\mult{\lambda_{a.d}}{\jac{G}{T}{\sigma}} \neq 0$. In that case, it follows from the rule \eqref{Eq:OR}, see \Cref{App:knowndata}, that
\[\mult{\lambda_{a.d}}{\jac{G}{T}{\sigma}}= 72, \quad \text{ where } \quad \lambda_{a.d} =-\fun{4}-\fun{7}.\]
\item
On the other hand, by \Cref{Lemma::geomtric_lema}, it follows that $\mult{\lambda_{1}}{
\jac{G}{T}{\pi}}=48$, where $\lambda_1 =\s{\alpha_7}\lambda_{a.d}$.
\item
Applying the rule \eqref{Eq:: genan} on $\lambda_{a.d}$ with respect to $M_{\set{\alpha_5,\alpha_6,\alpha_7}}$ implies that
\[\mult{\lambda_1}{\jac{G}{T}{\sigma}} \geq 48=\mult{\lambda_1}{\jac{G}{T}{\pi}}.\]
\item
Hence, $\sigma$ is the unique irreducible constituent of $\pi$ with the property \[\mult{\lambda_1}{\jac{G}{T}{\sigma}} \neq 0.\]
\item
Let $M= M_{\set{\alpha_1,\alpha_2,\alpha_3,\alpha_4,\alpha_6}}$ and $ L = M_{\set{\alpha_1,\alpha_2,\alpha_3,\alpha_4, \alpha_5,\alpha_6}}$. Then, by induction in stages, one has
\[\begin{split}
\Ind_{M_5}^{G}(\Omega_{M_{5},-1}) &\hookrightarrow
\Ind_{M}^{G}\bk{-\fun{5} -\underbrace{\frac{1}{2}\bk{9\fun{5} +3\fun{7}}}_{\rho_M}
	+ \underbrace{5 \fun{5}  }_{\rho_{M_5}}
}
\\
\simeq & 
\Ind_{M}^{G} \bk{ -\frac{1}{2}\fun{5} -\frac{3}{2}\fun{7}} \simeq \Ind_{L}^{G}\bk{\Ind_{M}^{L}(\Omega_{M,-\frac{1}{2}}^{L} ) \otimes -\frac{7}{3}\fun{7}}.   \\
\end{split}\]

Denote the right hand side by 
$\Pi$.
\item
Let 
$u=\s{\alpha_4}
\s{\alpha_5}\s{\alpha_6}\s{\alpha_3}\s{\alpha_2}\s{\alpha_4}\s{\alpha_5}$. By \Cref{E6:reseults},  
$\M_{u}\res{\Pi}$ is injective. Hence, 
\[
\pi \hookrightarrow \Pi \hookrightarrow \Ind_{T}^{G}(\lambda_1).
\]
\item
Applying \Cref{cor::unique_irr}, the claim follows.
\end{itemize}
\end{enumerate}
\end{proof}

\begin{Prop}
\label{prop :: order_2_irr_Subs}
Let $\chi$ be a character of order 2. Then 
\begin{enumerate}
\item
$\Ind_{M_2}^{G}(\Omega_{M_2,0,\chi})$ and $\Ind_{M_{7}}^{G}(\Omega_{M_7,0,\chi})$ are semi-simple unitary representations of length $2$. 
\item $\Ind_{M_{2}}^{G}(\Omega_{M_2,-2,\chi})$ and $\Ind_{M_{5}}^{G}(\Omega_{M_5,-2,\chi})$ admit a maximal semi-simple representation of length $2$.
\item
$\Ind_{M_5}^{G}(\Omega_{M_5,0,\chi})$ is irreducible.
\end{enumerate}
\end{Prop}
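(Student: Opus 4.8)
The plan is to combine the geometric lemma (\Cref{Lemma::geomtric_lema}) and the bound following \Cref{eq:: num_irr} with the branching‑rule machinery of \Cref{subsection :: nonreg} and the intertwining‑operator arguments already used above. Throughout write $\lambda_0$ for the leading exponent of $\pi$ and $\lambda_{a.d}$ for an anti‑dominant exponent in its Weyl orbit. For item (1): since $s=0$ and $ord(\chi)=2$, the characters $\Omega_{M_2,0,\chi}$ and $\Omega_{M_7,0,\chi}$ are unitary, so $\Ind_{M_2}^{G}(\Omega_{M_2,0,\chi})$ and $\Ind_{M_7}^{G}(\Omega_{M_7,0,\chi})$ are unitary, hence semi‑simple. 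They are reducible by \Cref{Tab::E7::REG::P_2} and \Cref{Tab::E7::REG::P_7}, so their length is at least $2$. A direct application of \Cref{Lemma::geomtric_lema} gives $\mult{\lambda_0}{\jac{G}{T}{\pi}}=2$, and then \Cref{eq:: num_irr} bounds the length of a semi‑simple $\pi$ by $\mult{\lambda_0}{\jac{G}{T}{\pi}}=2$. Hence the length is exactly $2$ — this is the same argument used for $[4,0,2]$ in \Cref{subsection::proof}.

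Item (3) is handled exactly as \Cref{Prop::E7p50O1}. The character $\Omega_{M_5,0,\chi}$ is unitary, so $\pi=\Ind_{M_5}^{G}(\Omega_{M_5,0,\chi})$ is semi‑simple and it suffices to prove that $\pi$ admits a unique irreducible subrepresentation. I would fix an irreducible constituent $\sigma$ of $\pi$ with $\mult{\lambda_{a.d}}{\jac{G}{T}{\sigma}}\neq0$ — which exists, and by the orthogonality rule \eqref{Eq:OR} satisfies $\mult{\lambda_{a.d}}{\jac{G}{T}{\sigma}}=\mult{\lambda_{a.d}}{\jac{G}{T}{\pi}}$ — and, starting from $f_0=\delta_{\lambda_{a.d}}$, build a $\sigma$‑dominated sequence using the branching rules of \Cref{App:knowndata}, if necessary passing through an auxiliary degenerate principal series to fix the multiplicity at an intermediate exponent, exactly as in the three‑step computation of \Cref{tab::Prop::E7P5m0O1::1,tab::Prop::E7P5m0O1::2,tab::Prop::E7P5m0O1::3}, until $f_n(\lambda_0)=\mult{\lambda_0}{\jac{G}{T}{\pi}}$. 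This yields $\mult{\lambda_0}{\jac{G}{T}{\sigma}}=\mult{\lambda_0}{\jac{G}{T}{\pi}}$, so by \ref{Prop::unique::2} $\sigma$ is the unique irreducible subrepresentation, and semi‑simplicity forces $\pi=\sigma$.

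For item (2) the upper bound is as in item (1): \Cref{Lemma::geomtric_lema} gives $\mult{\lambda_0}{\jac{G}{T}{\pi}}=2$, so by \Cref{eq:: num_irr} the maximal semi‑simple subrepresentation has length at most $2$. Both representations are reducible (\Cref{Tab::E7::REG::P_2}, \Cref{Tab::E7::REG::P_5}); in \Cref{subsection::proof} this was obtained from \Cref{Tadic::irr} by comparing $\Ind_{M_2}^{G}(\Omega_{M_2,-2,\chi})$ and $\Ind_{M_5}^{G}(\Omega_{M_5,-2,\chi})$ with one another. For the lower bound I would exhibit two non‑isomorphic irreducible subrepresentations, following the pattern of the analogous case $[2,-1,1]$. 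By \ref{Prop::unique::1} every irreducible subrepresentation $\tau$ of $\pi$ has $\lambda_0\leq\jac{G}{T}{\tau}$, and $\mult{\lambda_0}{\jac{G}{T}{\pi}}=2$ bounds the number of irreducible constituents with this property by $2$. Taking $\tau_1$ to be the constituent with $\mult{\lambda_{a.d}}{\jac{G}{T}{\tau_1}}\neq0$, a branching‑rule chain gives $\mult{\lambda_0}{\jac{G}{T}{\tau_1}}\geq1$ and a comparison with a second degenerate principal series gives $\mult{\lambda_0}{\jac{G}{T}{\tau_1}}=1$; the remaining unit of multiplicity then forces a second constituent $\tau_2\neq\tau_1$ with $\mult{\lambda_0}{\jac{G}{T}{\tau_2}}=1$. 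Finally, from the embedding $\pi\hookrightarrow\Ind_T^G(\lambda_0)$ (induction in stages) and a multiplicity count at $\lambda_0$ inside $\Ind_T^G(\lambda_0)$, the copy of each $\tau_i$ inside $\Ind_T^G(\lambda_0)$ must already lie in $\pi$ — otherwise $\lambda_0$ would occur in $\jac{G}{T}{\Ind_T^G(\lambda_0)}$ with multiplicity at least $\mult{\lambda_0}{\jac{G}{T}{\tau_i}}+\mult{\lambda_0}{\jac{G}{T}{\pi}}$, exceeding the value computed by \Cref{Lemma::geomtric_lema}. Hence $\tau_1,\tau_2\hookrightarrow\pi$ and the maximal semi‑simple subrepresentation has length exactly $2$. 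An alternative, in the spirit of the proofs for $[4,-1,1]$ and $[5,-1,1]$, is to write $\pi=\Ind_L^G(\pi')$ for a suitable intermediate Levi $L$ with $\pi'$ a degenerate principal series of $L$ whose maximal semi‑simple subrepresentation $\rho_1\oplus\rho_2$ of length $2$ is recorded in \Cref{App:knowndata}, obtaining $\Ind_L^G(\rho_1)\oplus\Ind_L^G(\rho_2)\hookrightarrow\pi$, and then to separate the two summands by showing that each $\Ind_L^G(\rho_i)$ has simple socle via injectivity of an intertwining operator $\M_u$ into a principal series.

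The main obstacle is the lower bound in item (2): the branching‑rule calculus only controls Jacquet modules, so detecting two irreducible constituents reaching the leading exponent does not by itself produce two subrepresentations. Promoting constituents to subrepresentations requires the auxiliary embedding — into $\Ind_T^G(\lambda_0)$ for the multiplicity‑one argument, or into an induced‑from‑smaller‑Levi representation with known socle for the intertwining‑operator argument — together with the verification that the relevant intertwining operator is injective, which is precisely where the structural results of \Cref{App:knowndata} for groups of smaller rank enter.
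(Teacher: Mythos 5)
Your upper bounds (unitarity/semi-simplicity plus $\mult{\lambda_0}{\jac{G}{T}{\pi}}=2$ and \Cref{eq:: num_irr}) match the paper, but the core of each item is missing or broken. In item (1) you derive reducibility from \Cref{Tab::E7::REG::P_2} and \Cref{Tab::E7::REG::P_7}; those tables are the statement of \Cref{theroem::main}, and the entries $[2,0,2]$ and $[7,0,2]$ are precisely among the cases that this proposition must prove (they appear in the list of cases where the algorithm of \Cref{section:: algo} was inconclusive), so the argument is circular. In item (3) the appeal to \eqref{Eq:OR} fails: for the relevant $\lambda_{a.d}$ the pairings $\inner{\lambda_{a.d},\check{\alpha}}$ equal the ramified character $\chi\neq 0$ at every simple root except $\alpha_4$, so $\Theta_{\lambda_{a.d}}=\emptyset$ and the rule yields multiplicity $1$ rather than the full multiplicity $16$; moreover the paper explicitly lists $[5,0,2]$ among the cases where the $\sigma$-dominated-sequence algorithm did not reach $f_\pi$, so "run the branching rules until $f_n(\lambda_0)=2$" is exactly the step that cannot be carried out directly. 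In item (2) the step promoting the two constituents $\tau_1,\tau_2$ to subrepresentations does not work: having $\lambda_0\leq\jac{G}{T}{\tau_i}$ together with $\pi\hookrightarrow\Ind_T^G(\lambda_0)$ does not place $\tau_i$ in the socle of $\pi$, and the multiplicity inequality you invoke double-counts occurrences of $\lambda_0$ that already lie inside $\pi$.

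The missing ingredient, used uniformly by the paper for all three items, is a tempered/Langlands analysis at the anti-dominant exponent: one chooses a Levi $L$ (namely $L=M_{\set{\alpha_1,\alpha_2,\alpha_3,\alpha_5,\alpha_7}}$ for $[2,0,2]$, $L=M_{\set{\alpha_2,\alpha_5,\alpha_7}}$ for $[7,0,2]$, $L=M_{\set{\alpha_2,\alpha_3,\alpha_5,\alpha_7}}$ for $[2,-2,2]$ and $[5,-2,2]$, and $L=M_4$ for $[5,0,2]$) such that $\Ind_T^L(\lambda_{a.d})$ is tempered, decomposes it by restricting to $L^{der}$ (a product of $\SL_2$, $\SL_3$, $\SL_4$ factors) and tracking how conjugation by the cocharacters $\check{\alpha}(\varpi)$ glues the $L^{der}$-constituents into irreducible tempered representations $\tau_\epsilon$ of $L$. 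Langlands' subrepresentation theorem applied to the standard modules $\Ind_L^G(\tau_\epsilon)$ then produces exactly one or two irreducible subquotients of $\Ind_T^G(\lambda_{a.d})$ carrying the exponent $\lambda_{a.d}$: this settles reducibility in item (1), supplies the two socle members $\Pi_{\pm 1}$ in item (2) (combined with the injectivity of an explicit $\M_w(\lambda_0)$ to embed $\pi\hookrightarrow\Ind_T^G(\lambda_{a.d})$ — an embedding into the principal series at $\lambda_{a.d}$, not at $\lambda_0$), and identifies the unique constituent $\Pi$ with $\mult{\lambda_{a.d}}{\jac{G}{T}{\Pi}}=16$ that seeds the non-unital branching computation closing item (3). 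Without this mechanism none of the three items can be completed along the lines you propose.
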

\begin{proof}

In all cases, $\pi$ satisfies 
$\mult{\lambda_{0}}{\jac{G}{T}{\pi}}=2,$ 
where $\lambda_0$ is the leading exponent of $\pi$.

For each case, we will fix an anti-dominant exponent $\lambda_{a.d}$ and a Levi subgroup $L$ of $G$ such that 
$\Ind_{T}^{L}(\lambda_{a.d})$ is tempered and for each irreducible constituent $\sigma$ of $\Ind_{T}^{L}(\lambda_{a.d})$, $\Ind_{L}^{G}(\sigma)$ is a standard module, in the sense of \cite[Section 1]{MR2490651}.
We then use Langlands' unique irreducible subrepresentation theorem to determine the reducibility and number of irreducible subrepresentations of $\pi$.

In order to decompose $\Ind_{T}^{L}(\lambda_{a.d})$ into irreducible constituents, we first restrict it to the derived subgroup $L^{der}$ and decompose it as a representation of $L^{der}$.
We then study the way in which these irreducible representations of $L^{der}$ are glued into the irreducible constituents of $\Ind_{T}^{L}(\lambda_{a.d})$ as a representation of $L$.

In particular, if $\pi=\Ind_{M}^{G}(\Omega_{M,0,\chi})$, then $\pi$ is unitary and semi-simple of length at most $2$.
Hence, it is of length $2$ if and only if it is reducible.

\begin{enumerate}
\item
\underline{$\Ind_{M_2}^{G}(\Omega_{M_2,0,\chi})$}
Let $\pi=\Ind_{M_2}^{G}(\Omega_{M_2,0,\chi})$ and fix an anti-dominant exponent 
\[
\lambda_{a.d} = \esevencharnontriv{\chi}{\chi}{\chi}{-1}{\chi}{-1}{\chi}
\]
 of $\pi$.
We start by studying the principal series representation $i_T^L\lambda_{a.d.}$, where
$L= M_{\set{\alpha_1,\alpha_2,\alpha_3,\alpha_5,\alpha_7}}$.
We note that 
\[L^{der} = \SL_3^{\alpha_1,\alpha_3} \times
\SL_2^{\alpha_2} \times \SL_2^{\alpha_5} \times \SL_2^{\alpha_7},\]
where the superscripts indicate the simple roots in the copy of $SL_2$.
By \cite{MR620252}, it holds that
 \[\Ind_{T}^{L}(\lambda_{a.d})\res{L^{der}} =\bigoplus_{\epsilon_1,\epsilon_2,\epsilon_3 \in \{\pm 1\}}
 \underbrace{
  \sigma_{\chi}^{(3)} \boxtimes \sigma_{\epsilon_1,\chi}^{(2)}
 \boxtimes \sigma_{\epsilon_2,\chi}^{(2)}
 \boxtimes \sigma_{\epsilon_3,\chi}^{(2)}}_{\tau_{\epsilon_1,\epsilon_2,\epsilon_3}},\] where 
  $\sigma_{\chi}^{(3)} =\Ind_{T_{\SL_3}}^{\SL_3}(\chi \circ(\fun{1}+ \fun{3}))$ is irreducible, and $\Ind_{T_{\SL_2}}^{\SL_2}(\chi) = \sigma_{1,\chi}^{(2)} \oplus \sigma_{-1,\chi}^{(2)}$ is semi-simple of length $2$.
  
  Let $\varpi$ be the uniformizer. We recall that 
\[{\small \left(\begin{array}{rr}
  1 & 0 \\
  0 & \varpi
  \end{array}\right)} \cdot \sigma_{\epsilon,\chi}^{(2)} = \sigma_{-\epsilon,\chi}^{(2)}.\]

Hence, $\check{\alpha_4}(\varpi) \tau_{\epsilon_1,\epsilon_2,\epsilon_3} = \tau_{-\epsilon_1,-\epsilon_2,\epsilon_3}$ and 
$\check{\alpha_6}(\varpi) \tau_{\epsilon_1,\epsilon_2,\epsilon_3} = \tau_{\epsilon_1,-\epsilon_2,-\epsilon_3}.$

Since $L =\inner{L^{der},\check{\alpha_4}(x_1) , \check{\alpha_6}(x_2) \: : \:  x_1,x_2 \in F}$, it follows that 
\[\Ind_{T}^{L}(\lambda_{a.d}) = \tau_{-1} \oplus \tau_{1},\]
where
$\tau_{\epsilon} = \sum_{ \epsilon_1\epsilon_2\epsilon_3=\epsilon} \tau_{\epsilon_1,\epsilon_2, \epsilon_3}$ are irreducible tempered representations of $L$. Namely, $\Ind_{T}^{L}(\lambda_{a.d})$ is semi-simple of length $2$, and each irreducible constituent is glued out of $4$ irreducible representations of $L^{der}$.

By the Langlands' subrepresentation theorem, each $\Ind_{L}^{G}(\tau_{\epsilon})$ admits a unique irreducible subrepresentation, say $\Pi_{\epsilon}$. By \ref{Prop::unique::1},
\[\mult{\lambda_{a.d}}{\jac{G}{T}{\Pi_{\epsilon}}} \geq 1.\]

In particular, there are at least two irreducible subquotients of $\Ind_{T}^{G}(\lambda_{a.d})$ having $\lambda_{a.d}$ as an exponent. Since 
\[\mult{\lambda_{a.d}}{\jac{G}{T}{\pi}} = \mult{\lambda_{a.d}}{\jac{G}{T}{\Ind_{T}^{G}(\lambda_{a.d})}},\]
it follows that $\pi$ is reducible. 

\underline{$\Ind_{M_7}^{G}(\Omega_{M_7,0,\chi})$}
Let $\pi=\Ind_{M_7}^{G}(\Omega_{M_7,0,\chi})$ and fix an anti-dominant exponent
\[
\lambda_{a.d} = \esevencharnontriv{-1}{\chi}{-1}{-1}{\chi}{-1}{\chi}
\]
of $\pi$.
Similarly to the previous case, we study the principal series representation $i_T^L\lambda_{a.d.}$, where
$L= M_{\set{\alpha_2,\alpha_5,\alpha_7}}$.
We note that 
\[
L^{der} = \SL_2^{\alpha_2} \times
\SL_2^{\alpha_5} \times \SL_2^{\alpha_7}
\]
By \cite{MR620252}, it holds that
\[
\Ind_{T}^{L}(\lambda_{a.d})\res{L^{der}} =\bigoplus_{\epsilon_1,\epsilon_2,\epsilon_3 \in \{\pm 1\}}
\underbrace{
\sigma_{\epsilon_1,\chi}^{(2)}
\boxtimes \sigma_{\epsilon_2,\chi}^{(2)}
\boxtimes \sigma_{\epsilon_3,\chi}^{(2)}}_{\tau_{\epsilon_1,\epsilon_2,\epsilon_3}}.
\]
Since $L =\inner{L^{der},\check{\alpha_1}(x_1) , \check{\alpha_3}(x_2), \check{\alpha_4}(x_3) \check{\alpha_6}(x_4) \: : \:  x_1,x_2,x_3,x_4 \in F}$, it follows that 
\[\Ind_{T}^{L}(\lambda_{a.d}) = \tau_{-1} \oplus \tau_{1},\]
where
$\tau_{\epsilon} = \sum_{ \epsilon_1\epsilon_2\epsilon_3=\epsilon} \tau_{\epsilon_1,\epsilon_2, \epsilon_3}$ are irreducible tempered representations of $L$. Namely, $\Ind_{T}^{L}(\lambda_{a.d})$ is semi-simple of length $2$, and each irreducible constituent is glued out of $4$ irreducible representations of $L^{der}$.

The remainder of the argument is identical to the previous case.

\item
Let $\pi$ be $\Ind_{M_{2}}^{G}(\Omega_{M_2,-2,\chi})$ or $\Ind_{M_{5}}^{G}(\Omega_{M_5,-2,\chi})$.
Both representations admit the following anti-dominant exponent,
\[
\lambda_{a.d} = \esevencharnontriv{-1}{\chi}{\chi}{-1}{\chi}{-1}{\chi} .
\]
Let $L= M_{\set{\alpha_2,\alpha_3,\alpha_5,\alpha_7}}$ and note that 
\[
L^{der} = \SL_2^{\alpha_2} \times \SL_2^{\alpha_3} \times \SL_2^{\alpha_5} \times \SL_2^{\alpha_7} .
\]
It holds that
\[
\Ind_{T}^{L}(\lambda_{a.d})\res{L^{der}} =\bigoplus_{\epsilon_1,\epsilon_2,\epsilon_3,\epsilon_4 \in \{\pm 1\}}
\underbrace{
\sigma_{\epsilon_1,\chi}^{(2)}
\boxtimes \sigma_{\epsilon_2,\chi}^{(2)}
\boxtimes \sigma_{\epsilon_3,\chi}^{(2)}
\boxtimes \sigma_{\epsilon_4,\chi}^{(2)}}_{\tau_{\epsilon_1,\epsilon_2,\epsilon_3,\epsilon_4}}.
\]

Since $L =\inner{L^{der},\check{\alpha_1}(x_1) , \check{\alpha_4}(x_2) \check{\alpha_6}(x_3) \: : \:  x_1,x_2,x_3 \in F}$, it follows that 
\[\Ind_{T}^{L}(\lambda_{a.d}) = \tau_{-1} \oplus \tau_{1},\]
where
$\tau_{\epsilon} = \sum_{ \epsilon_2\epsilon_3\epsilon_4=\epsilon} \tau_{\epsilon_1,\epsilon_2, \epsilon_3,\epsilon_4}$ are irreducible tempered representations of $L$.
Namely, $\Ind_{T}^{L}(\lambda_{a.d})$ is semi-simple of length $2$, and each irreducible constituent is glued out of $8$ irreducible representations of $L^{der}$.

By the Langlands' subrepresentation theorem, each $\Ind_{L}^{G}(\tau_{\epsilon})$ admits a unique irreducible subrepresentation, say $\Pi_{\epsilon}$.
Furthermore, by \ref{Prop::unique::1}, it holds that
\[
\mult{\lambda_{a.d}}{\jac{G}{T}{\Pi_{\epsilon}}} \geq 1.
\]
Since 
\[
\mult{\lambda_{a.d}}{\jac{G}{T}{\pi}} = \mult{\lambda_{a.d}}{\jac{G}{T}{\Ind_{T}^{G}(\lambda_{a.d}}} = 2,
\]
it follows that $\Pi_1\oplus\Pi_{-1}$ is the maximal semi-simple subrepresentation of $\Ind_T^G\lambda_{a.d.}$.
On the other hand, 
\[
\mult{\lambda_{a.d}}{\jac{G}{T}{\pi}} = \mult{\lambda_{a.d}}{\jac{G}{T}{\Ind_{T}^{G}(\lambda_{a.d}}} = 2,
\]
and hence, both $\Pi_1$ and $\Pi_{-1}$ are subquotients of $\pi$.
On the other hand, a branching rule calculation shows that
\[
\mult{\lambda_{a.d}}{\jac{G}{T}{\Pi_\epsilon}} \geq 1 .
\]
Let $\lambda_0$ be the initial exponent of $\pi$ and
\[
w=\piece{
	\s{\alpha_2}\s{\alpha_3}\s{\alpha_5}\s{\alpha_4}\s{\alpha_6}\s{\alpha_5}\s{\alpha_1}\s{\alpha_3}\s{\alpha_4}\s{\alpha_2}, & \pi=\Ind_{M_{2}}^{G}(\Omega_{M_2,-2,\chi}) \\
	\s{\alpha_5}\s{\alpha_3}\s{\alpha_6}\s{\alpha_4}\s{\alpha_5}, & \pi=\Ind_{M_{5}}^{G}(\Omega_{M_5,-2,\chi}).
}
\]
Since $\M_{w}\bk{\lambda_0}$ is an isomorphism, it follows that $\pi\hookrightarrow \Ind_T^G\lambda_{a.d.}$.
It follows that $\Pi_1\oplus\Pi_{-1}$ is the unique irreducible subrepresentation of $\pi$.

\item
Let 
\[
\lambda_{a.d} = \esevencharnontriv{\chi}{\chi}{\chi}{-1}{\chi}{\chi}{\chi}
\]
and let $L =M_{4}$. We note that 
\[L^{der} =\SL_3^{\alpha_1,\alpha_3} \times
\SL_2^{\alpha_2} \times \SL_4^{\alpha_5,\alpha_6,\alpha_7}\]
and that 
\[\Ind_{T}^{L}(\lambda_{a.d})\res{L^{der}} = \bigoplus_{\epsilon \in \set{\pm1}} \underbrace{\sigma_{\chi}^{(3)} \boxtimes \sigma_{\epsilon,\chi}^{(2)} \boxtimes\sigma_{\chi}^{(4)}}_{\tau_{\epsilon}},\]
where $\sigma_{\epsilon,\chi}^{(2)},\sigma_{\chi}^{(3)}$ are as above and 
$\sigma_{\chi}^{(4)} = \Ind_{T_{\SL_4}}^{\SL_4}(\chi\circ(\fun{1} + \fun{2} + \fun{3}))$ is irreducible by \cite{MR620252}. 

Since $L = \inner{L^{der},\check{\alpha_4}(x) \: : \: x \in F}$
 and 
 $\check{\alpha_4}(\bar{\omega}) \cdot \tau_{\epsilon} = \tau_{-\epsilon}$, it follows that 
 \[
 \Ind_{T}^{L}(\lambda_{a.d}) = \tau,
 \]
 where $\tau =\tau_{1} \oplus \tau_{-1}$ is an irreducible tempered representation of $L$, glued from $2$ irreducible representations of $L^{der}$.
 Hence, $\Ind_{L}^{G}(\tau)$ is a standard module and, by the Langlands' subrepresentation theorem, it admits a unique irreducible subrepresentation $\Pi$. Furthermore, 
 $\mult{\Pi}{\Ind_{L}^{G}(\tau)} =1$.
 
 By \ref{Prop::unique::1}, $\Pi$ is the unique subquotient of $\Ind_{T}^{G}(\lambda_{a.d}) =\Ind_{M}^{G}(\tau)$ satisfying
 \[\mult{\lambda_{a.d}}{\jac{G}{T}{\Pi}}\neq 0.\]
 Otherwise, there would be a different subquotient $\Pi^{'}$of $\Ind_{T}^{G}(\lambda_{a.d})$ such that 
 \[\mult{\lambda_{a.d}}{\jac{G}{T}{\Pi^{'}}}\neq 0.\]
 This implies that either $\Pi \simeq \Pi^{'}$ or,  by a central character argument, see \cite[Lemma. 3.12]{E6}, that 
 \[\Pi^{'} \hookrightarrow \Ind_{T}^{G}(\lambda_{a.d}).\]
 Both would bring us to a contradiction.
 See \cite[\S 5]{MR2490651} for an alternative proof of this fact.
 
 We conclude  that 
 \[\mult{\lambda_{a.d}}{\jac{G}{T}{\Pi}} = \mult{\lambda_{a.d}}{\jac{G}{T}{\Ind_{T}^{G}(\lambda_{a.d}}} =16.\]
A branching rule calculation implies that
\[
16 \times \coset{\lambda_{a.d}} \leq \jac{G}{T}{\Pi} \Rightarrow 2 \times \coset{\lambda_{0}} \leq \jac{G}{T}{\Pi} .
\]
Namely, there exist a non-unital $\sigma$-dominated sequence $\set{f_{k}}_{k=0}^{n}$ in $\mathcal{S}$ such that $f_0(\lambda_{a.d.})=16$ and $f_n(\lambda_0)=2$.
Hence, $\Pi =\pi$ is irreducible.
\end{enumerate} 
\end{proof}

\begin{Remark}
	This method can also be used to prove that  $\Ind_{M_7}^{G}(\Omega_{M_7,0,\chi})$ is of length $2$.
\end{Remark}

Similarly, we prove the following.

\begin{Prop}\label{prop::lang_proof}
	Let $\chi$ be a finite character of order $4$.
	Then, the representation
	$\pi=\Ind_{M_4}^{G}(\Omega_{M_4,-\frac{1}{2},\chi})$ admits a unique irreducible subrepresentation.
\end{Prop}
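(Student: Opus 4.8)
The plan is to follow the Langlands standard-module strategy of \Cref{prop :: order_2_irr_Subs}, adapted to an order-$4$ character; recall that $\pi$ is (non-regular) reducible by \Cref{Tab::E7::REG::P_4}, so the claim concerns the socle of $\pi$. Write $\Omega=\Omega_{M_4,-\frac{1}{2},\chi}$ and let $\lambda_0=\jac{M_4}{T}{\Omega}$ be the leading exponent of $\pi$. First I would fix an anti-dominant exponent $\lambda_{a.d}\in\weyl{G}\cdot\lambda_0$ of $\pi$, and let $L$ be the standard Levi subgroup generated by $T$ together with the reflections $\s{\alpha}$ for which $\Real\inner{\lambda_{a.d},\check\alpha}=0$. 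Then $\Real(\lambda_{a.d})$ vanishes on $T\cap L^{der}$, so $\Ind_T^L(\lambda_{a.d})$ is tempered, and, $\lambda_{a.d}$ being anti-dominant, $\Ind_L^G(\sigma)$ is a standard module in the sense of \cite[Section 1]{MR2490651} for every irreducible constituent $\sigma$ of $\Ind_T^L(\lambda_{a.d})$.

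The next step is to decompose $\Ind_T^L(\lambda_{a.d})$. As in \Cref{prop :: order_2_irr_Subs}, one restricts to $L^{der}$, which is a product of groups of type $A$, that is, of copies $\SL_{n_i}$, and records from $\lambda_{a.d}$ which power of $\chi$ each factor carries. Here the essential difference from the order-$2$ analysis is that $\chi^2\neq\Id$: by \cite{MR620252}, every copy of $\SL_2$ carrying the character $\chi^{\pm1}$ gives an \emph{irreducible} principal series, and every copy of $\SL_n$ with $n\geq3$ is irreducible as well, so only a copy of $\SL_2$ carrying the quadratic character $\chi^2$ (should one occur) is reducible, of length $2$. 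One then computes the action on the resulting constituents of the coweights $\check{\alpha_i}(\varpi)$ generating $L$ over $L^{der}$ and its centre, and reads off the decomposition $\Ind_T^L(\lambda_{a.d})=\bigoplus_\epsilon\tau_\epsilon$ into irreducible tempered representations of $L$. Because far fewer $\SL_2$-factors are reducible than in \Cref{prop :: order_2_irr_Subs}, I expect $\Ind_T^L(\lambda_{a.d})$ to be irreducible, equal to a single tempered $\tau$; if instead it splits as $\tau_1\oplus\tau_{-1}$, one obtains from Langlands' subrepresentation theorem \cite[Section 1]{MR1665057} candidate subrepresentations $\Pi_\epsilon\hookrightarrow\Ind_L^G(\tau_\epsilon)$, each with $\mult{\lambda_{a.d}}{\jac{G}{T}{\Pi_\epsilon}}\geq1$.

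It then remains to embed $\pi$ into $\Ind_T^G(\lambda_{a.d})\cong\Ind_L^G\big(\Ind_T^L(\lambda_{a.d})\big)$ and to locate the socle of $\pi$ inside that of the right-hand side. As in the treatment of $\Ind_{M_4}^G(\Omega_{M_4,-1})$ above, I would first use induction in stages to embed $\pi$ into a principal series induced from a suitable intermediate Levi, and then show that the relevant normalized intertwining operator $\M_w(\lambda_0)$ — written, via \ref{properties::intertwining::2}, as a composition of rank-one operators — is injective on $\pi$ by reducing to the analogous operator inside a lower-rank Levi of type $E_6$, $D_5$ or $A$ and invoking the structure results recorded in \Cref{App:knowndata}. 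Granting this: if $\Ind_T^L(\lambda_{a.d})$ is irreducible then $\Ind_L^G(\tau)$ is a standard module, hence has a unique irreducible subrepresentation $\Pi$ by \cite[Section 1]{MR1665057}, so the socle of $\pi$, being contained in that of $\Ind_L^G(\tau)$, equals $\Pi$; if $\Ind_T^L(\lambda_{a.d})$ splits, a branching-rule calculation in the style of \Cref{App:knowndata} comparing $\mult{\lambda_0}{\jac{G}{T}{\Pi_\epsilon}}$ with $\mult{\lambda_0}{\jac{G}{T}{\pi}}=\mult{\lambda_{a.d}}{\jac{G}{T}{\pi}}=|\Stab_{\weyl{G}}(\lambda_{a.d})|$ (the latter via \Cref{Lemma::geomtric_lema} and \cite[Lemma 3.4]{E6}) shows that exactly one $\Pi_\epsilon$ is a constituent of $\pi$, and one concludes by \Cref{Prop::unique irr} or \Cref{cor::unique_irr}. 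The main obstacle I anticipate is this last part: verifying that the chain of rank-one intertwining operators does not annihilate $\pi$, which hinges on the fine reducibility structure of the principal series of the relevant $E_6$/$D_5$-type Levi subgroups, together with the bookkeeping of the $L^{der}$-decomposition and the gluing pattern for the order-$4$ character $\chi$.
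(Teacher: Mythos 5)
Your overall strategy --- realizing the socle of $\pi$ inside standard modules $\Ind_L^G(\tau_\epsilon)$ with $\tau_\epsilon$ tempered on a Levi $L$ whose derived group is a product of type-$A$ factors, and then transporting the conclusion back to $\pi$ via an injective intertwining operator $\pi\hookrightarrow\Ind_T^G(\lambda_{a.d})$ --- is exactly the paper's. The gap is in the step you single out as the ``essential difference from the order-$2$ analysis''. You predict that, because $\chi$ has order $4$, the $\SL_2$-factors of $L^{der}$ carry $\chi^{\pm1}$ and hence contribute irreducible principal series, so that $\Ind_T^L(\lambda_{a.d})$ is likely a single irreducible tempered representation. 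This is not what happens. For the anti-dominant exponent the paper chooses,
\[
\lambda_{a.d} = \esevencharnontriv{-\tfrac{1}{2}+3\chi}{2\chi}{2\chi}{-\tfrac{1}{2}+\chi}{\chi}{-\tfrac{1}{2}+3\chi}{2\chi},
\]
with $L= M_{\set{\alpha_2,\alpha_3,\alpha_5,\alpha_7}}$ and $L^{der}=\SL_2^{\alpha_2}\times\SL_2^{\alpha_3}\times\SL_2^{\alpha_5}\times\SL_2^{\alpha_7}$, the character seen by each $\SL_2$-factor is $2\chi=\chi^2$, the \emph{quadratic} character attached to $\chi$, not $\chi$ itself. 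Hence every factor is reducible of length $2$, exactly as in case (2) of \Cref{prop :: order_2_irr_Subs}; the $16$ constituents over $L^{der}$ glue into two irreducible tempered representations $\tau_{\pm1}$ of $L$, and $\Ind_T^L(\lambda_{a.d})=\tau_1\oplus\tau_{-1}$.

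Your fallback branch does not rescue the argument either: you propose to show that exactly one $\Pi_\epsilon$ occurs in $\pi$, but since $\mult{\lambda_{a.d}}{\jac{G}{T}{\pi}}=\mult{\lambda_{a.d}}{\jac{G}{T}{\Ind_T^G(\lambda_{a.d})}}=2$, \emph{both} $\Pi_1$ and $\Pi_{-1}$ are subquotients of $\pi$; after establishing $\pi\hookrightarrow\Ind_T^G(\lambda_{a.d})$ via the explicit Weyl word
$w= \s{\alpha_4} \s{\alpha_5} \s{\alpha_6} \s{\alpha_1} \s{\alpha_3} \s{\alpha_2} \s{\alpha_4} \s{\alpha_5} \s{\alpha_4} \s{\alpha_1} \s{\alpha_3} \s{\alpha_2} \s{\alpha_4}$,
the paper concludes that $\Pi_1\oplus\Pi_{-1}$ is the maximal semi-simple subrepresentation of $\pi$, i.e.\ the socle has length $2$. (This agrees with \Cref{theroem::main}(2), which lists $[4,-\frac{1}{2},4]$ among the length-$2$ cases; the word ``unique'' in the statement of \Cref{prop::lang_proof} is at odds with both the theorem and the proposition's own proof, so the target you were handed is not what the paper actually establishes.) In short: the architecture of your argument is right, but the decisive computation --- which characters the $\SL_2$-factors actually carry --- goes the other way, and with it the conclusion.
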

\begin{proof}
	Let $\lambda_0=\jac{M_4}{T}{\Omega_{M_4,-\frac{1}{2},\chi}}$ and fix an anti-dominant exponent
	\[
	\lambda_{a.d} = \esevencharnontriv{-\frac{1}{2}+3\chi}{2\chi}{2\chi}{-\frac{1}{2}+\chi}{\chi}{-\frac{1}{2}+3\chi}{2\chi}
	\]
	of $\pi$.
	We remind the reader that we use an additive notation for $\bfX(F^\times)$, namely, 
	\[
	(n\chi)(x) = (\chi(x))^n .
	\]

	Let $L= M_{\set{\alpha_2,\alpha_3,\alpha_5,\alpha_7}}$ and note that 
	\[
	L^{der} = \SL_2^{\alpha_2} \times \SL_2^{\alpha_3} \times \SL_2^{\alpha_5} \times \SL_2^{\alpha_7} .
	\]
	We note that $2\chi$ is a character of order $2$.
	As in case (2) of \Cref{prop :: order_2_irr_Subs}, it holds that
	\[
	\Ind_{T}^{L}(\lambda_{a.d})\res{L^{der}} =\bigoplus_{\epsilon_1,\epsilon_2,\epsilon_3,\epsilon_4 \in \{\pm 1\}}
	\underbrace{
		\sigma_{\epsilon_1,2\chi}^{(2)}
		\boxtimes \sigma_{\epsilon_2,2\chi}^{(2)}
		\boxtimes \sigma_{\epsilon_3,2\chi}^{(2)}
		\boxtimes \sigma_{\epsilon_4,2\chi}^{(2)}}_{\tau_{\epsilon_1,\epsilon_2,\epsilon_3,\epsilon_4}}
	\]
	and 
	\[\Ind_{T}^{L}(\lambda_{a.d}) = \tau_{-1} \oplus \tau_{1},\]
	where
	$\tau_{\epsilon} = \sum_{ \epsilon_2\epsilon_3\epsilon_4=\epsilon} \tau_{\epsilon_1,\epsilon_2, \epsilon_3,\epsilon_4}$ are irreducible tempered representations of $L$.
	By the Langlands' subrepresentation theorem, each $\Ind_{L}^{G}(\tau_{\epsilon})$ admits a unique irreducible subrepresentation, say $\Pi_{\epsilon}$.
	Further more, a similar argument shows that $\Pi_1\oplus\Pi_{-1}$ is the maximal semi-simple subrepresentation of $\Ind_T^G\lambda_{a.d.}$ and that both $\Pi_1$ and $\Pi_{-1}$ are subquotients of $\pi$.
	
	The intertwining operator $\M_{w}\bk{\lambda_0}$, where 
	\[
	w= \s{\alpha_4} \s{\alpha_5} \s{\alpha_6} \s{\alpha_1} \s{\alpha_3} \s{\alpha_2} \s{\alpha_4} \s{\alpha_5} \s{\alpha_4} \s{\alpha_1} \s{\alpha_3} \s{\alpha_2} \s{\alpha_4},
	\]
	is an isomorphism and hence $\pi\hookrightarrow \Ind_T^G\lambda_{a.d.}$.
	It follows that $\Pi_1\oplus\Pi_{-1}$ is the maximal semi-simple subrepresentation of $\pi$.

	
%
\end{proof}

Next we turn to deal with two cases where we were not able to determine the length of the maximal semi-simple subrepresentation using the tools described in  \Cref{section:: algo}. 
We deal with these cases using a calculation in the Iwahori-Hecke algebra of $G$, which can be done since both of these representations are unramified.
We outline the proof here, while the part which uses the Iwahori-Hecke algebra is left to \Cref{section:: iwahori hecke algebra}.

\begin{Prop}\label{Prop::E7::iwahori}
\mbox{}

\begin{enumerate}
\item
The representation $\pi=\Ind_{M_{2}}^{G}(\Omega_{M_{2},-1})$ admits a  maximal semi-simple subrepresentation of length two. 
\item
The representation $\pi=\Ind_{M_{4}}^{G}(\Omega_{M_{4},0})$ is irreducible.
\end{enumerate}
\end{Prop}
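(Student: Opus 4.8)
The plan is to translate both statements into questions about finite-dimensional modules over the Iwahori--Hecke algebra $\mathcal{H}=\mathcal{H}(G,I)$ of $G$, with $I$ an Iwahori subgroup, using the theory recalled in \Cref{section:: iwahori hecke algebra}. Both representations in the statement are unramified (their inducing characters have trivial finite part), and every irreducible subquotient of $\pi$ is a constituent of the unramified principal series $\Ind_T^G(\lambda_{a.d.})$, hence has nonzero space of $I$-fixed vectors. The functor $V\mapsto V^I$ is exact, and it restricts to an equivalence between the full subcategory of smooth representations generated by their $I$-fixed vectors --- a subcategory closed under subquotients --- and the category of $\mathcal{H}$-modules. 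Since $\pi$ embeds into $\Ind_T^G(\lambda_{a.d.})$ it lies in this subcategory, and the equivalence carries the lattice of subrepresentations of $\pi$ onto the lattice of $\mathcal{H}$-submodules of $\pi^I$. In particular the length of the maximal semisimple subrepresentation of $\pi$ equals the length of the socle of the $\mathcal{H}$-module $\pi^I$, and $\pi$ is irreducible if and only if $\pi^I$ is a simple $\mathcal{H}$-module.

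We then make $\pi^I$ explicit. Since $\Omega$ is an unramified character of $M=M_i$ and $I\cap M$ is an Iwahori subgroup of $M$, the space $\Omega^{I\cap M}$ is one-dimensional, and parabolic induction for Hecke modules yields $\pi^I\cong\mathcal{H}\otimes_{\mathcal{H}_M}\Omega^{I\cap M}$ with $\mathcal{H}_M=\mathcal{H}(M,I\cap M)$; this module has dimension $|W^{M_i,T}|$ (so $576$ for $i=2$ and $10080$ for $i=4$), with a basis indexed by the minimal-length representatives $W^{M_i,T}$. From the Bernstein presentation of $\mathcal{H}$, with its commutative subalgebra $\mathcal{A}$, one reads off the $\mathcal{A}$-weights of $\pi^I$ --- which coincide, with multiplicities, with the exponents of $\jac{G}{T}{\pi}$ --- together with the action of the finite Hecke subalgebra and of the Bernstein generators. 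All the structural questions then become concrete (if sizeable) linear algebra over these finite-dimensional modules, carried out with a computer in \Cref{section:: iwahori hecke algebra}.

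For part (2), first observe that $\pi=\Ind_{M_{4}}^{G}(\Omega_{M_{4},0})$ is unitarily induced from the trivial representation of $M_4$, hence unitary and semisimple; it is therefore enough to show that $\pi$ has a unique irreducible subrepresentation, equivalently that $\pi^I$ has a simple socle. By \Cref{Lemma::geomtric_lema} we have $\mult{\lambda_0}{\jac{G}{T}{\pi}}=2$ for the leading exponent $\lambda_0$, and by \ref{Prop::unique::1} every irreducible subrepresentation of $\pi$ contains $\lambda_0$ in its Jacquet module; hence it suffices to exhibit a single irreducible constituent $\tau$ of $\pi$ with $\mult{\lambda_0}{\jac{G}{T}{\tau}}=2$, after which \ref{Prop::unique::2} together with semisimplicity finishes the proof. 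The Hecke-algebra input --- a sharpening of the branching-rule estimates of \Cref{section:: algo}, which here fell short --- is the explicit computation, inside $\pi^I$, of the $\mathcal{H}$-submodule generated by a leading-weight vector and of its decomposition into simples, showing that a single simple constituent carries the whole $\lambda_0$-weight space.

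For part (1), $\pi=\Ind_{M_{2}}^{G}(\Omega_{M_{2},-1})$ is not unitary, so one must genuinely produce two non-isomorphic irreducible $\mathcal{H}$-submodules of $\pi^I$; again $\mult{\lambda_0}{\jac{G}{T}{\pi}}=2$. Working inside the $576$-dimensional module $\pi^I$ --- which, via $\pi\hookrightarrow\Ind_T^G(\lambda_0)$, also embeds into the principal-series Hecke module $\mathcal{H}\otimes_{\mathcal{A}}\C_{\lambda_0}$ --- one computes its socle from the explicit module structure and the intertwining operators between principal-series Hecke modules, obtaining two simple submodules $\Pi_1^I$ and $\Pi_2^I$, each meeting the $\lambda_0$-weight space in a line, and one checks $\Pi_1^I\not\cong\Pi_2^I$ by comparing the multiplicity of some further exponent in their Jacquet modules. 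Since $\mult{\lambda_0}{\jac{G}{T}{\pi}}=2$ this exhausts the $\lambda_0$-weight space, while \ref{Prop::unique::1} forces every socle constituent to contain $\lambda_0$; hence the socle is precisely $\Pi_1^I\oplus\Pi_2^I$, of length $2$. The main obstacle is not conceptual but computational: socle and intertwining-operator rank calculations in a $576$- and a $10080$-dimensional module over a rank-$7$ affine Hecke algebra are heavy, which is why they are deferred to \Cref{section:: iwahori hecke algebra}; the one delicate point in the reduction is that $(-)^I$ faithfully records the socle here, which rests on every irreducible subquotient of $\pi$ being unramified.
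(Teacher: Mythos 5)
Your overall framework coincides with the paper's: both proofs pass to Iwahori-fixed vectors via the Borel--Casselman equivalence, use $\mult{\lambda_0}{\jac{G}{T}{\pi}}=2$ to bound the maximal semi-simple subrepresentation by length $2$, and use unitarity of $\Ind_{M_4}^G(\Omega_{M_4,0})$ to reduce part (2) to uniqueness of the irreducible subrepresentation. The decisive step, however, is organized quite differently. The paper does \emph{not} compute the socle of $\pi^J$ or decompose any Hecke module into simples. Instead it chooses a triple $(\sigma,w,\lambda_1)$ with $\lambda_1=w\cdot\lambda_0$ an exponent occurring in $\jac{G}{T}{\pi}$ only inside the single constituent $\sigma$ (located by a branching-rule calculation), checks separately that $\sigma$ embeds in $\pi$ with multiplicity one (by a central-character argument in case (1), by semi-simplicity in case (2)), and then reads off everything from whether the normalized intertwining operator $N_w(\lambda_0)\res{\pi}$ is injective: if not, any irreducible subrepresentation of its kernel is a second summand of the socle; if so, $\pi\hookrightarrow\Ind_T^G\lambda_1$ forces $\sigma$ to be the unique irreducible subrepresentation. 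This reduces the entire Hecke-algebra input to one rank computation of an explicit matrix (right multiplication by $n_w(\lambda_0)$ on the basis $\set{T_u\cdot triv}$ of $\Hecke_{\para{P}}(\Omega)$), which is what \Cref{section:: iwahori hecke algebra} actually carries out, yielding kernel dimensions $15$ and $0$ respectively.

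This matters because the computation you defer to the appendix --- ``compute the socle of $\pi^I$'' and ``decompose the submodule generated by a leading-weight vector into simples'' for modules of dimension $576$ and $10{,}080$ over the affine Hecke algebra of type $E_7$ --- is not a specified finite procedure in your write-up; you would need either the classification of the relevant simple $\Hecke$-modules or an explicit radical/socle algorithm, and neither is supplied. In particular, for part (2) knowing the submodule generated by a generic $\lambda_0$-weight vector is all of $\pi^I$ would not by itself distinguish one simple summand carrying the full $\lambda_0$-weight space from two summands each carrying a line. So while your reduction to Hecke modules and your logical skeleton are sound, the proposal is missing the paper's key idea --- the triple $(\sigma,w,\lambda_1)$ and the injectivity dichotomy for $N_w(\lambda_0)\res{\pi}$ --- which is precisely what converts the problem into a tractable linear-algebra question. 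As a minor point, your assertion that $\pi$ embeds into $\Ind_T^G(\lambda_{a.d.})$ is not automatic (what induction in stages gives is $\pi\hookrightarrow\Ind_T^G(\lambda_0)$); this does not affect the claim that $\pi$ is generated by its $J$-fixed vectors, but such embeddings are exactly the kind of statement the paper has to earn via intertwining operators elsewhere.
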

\begin{proof}
	
Both cases are proven using the same approach.
We first outline our approach and then list the data required for each case, while postponing part of the calculation to \Cref{section:: iwahori hecke algebra}.
Let $\pi$ be one of the above representations, $\lambda_{0}$ (resp. $\lambda_{a.d}$) be the leading (resp. anti-dominant) exponent  of $\pi$ and note that $\mult{\lambda_{0}}{\jac{G}{T}{\pi}}=2$.
Hence, the maximal semi-simple subrepresentation of $\pi$ is of length at most 2.

We choose a subrepresentation $\sigma$ of $\pi$, a Weyl element $w\in W^{M,T}$ and an exponent $\lambda_1=w\cdot\lambda_0$, which satisfy the following properties:
\begin{itemize}
	\item $\mult{\lambda_1}{\jac{G}{T}{\pi}}= \mult{\lambda_1}{\jac{G}{T}{\sigma}}\neq 0$.
	In particular, $\sigma$ is the unique subquotient of $\pi$ with $\lambda_1$ as an exponent.
	\item $N_{w}(\lambda_0)\res{\pi}$ is non-zero.
\end{itemize}
Such triples $\bk{\sigma,w,\lambda_1}$ exist as will be shown bellow.
We start by explaining how such a triple can be used in order to determine the length of the maximal semi-simple subrepresentation of $\pi$.


Let $\bk{\sigma,w,\lambda_1}$ be such a triple.
It follows from \ref{Prop::unique::1} that $\sigma$ is the unique subquotient of $\pi$ which admits an embedding into $\Ind_T^G\lambda_1$.
It also follows that $\sigma$ appears in $\pi$ with multiplicity one.

By our assumptions, the image of $N_{w}(\lambda_0)\res{\pi}$ is a non-zero subrepresentation of $\Ind_T^G\lambda_1$ and hence,
\[
\lambda_1 \leq \jac{G}{T}{N_{w}(\lambda_0)(\pi)} .
\]
In particular, one concludes that $\sigma$ is the unique irreducible subrepresentation of the image $N_{w}(\lambda_0)(\pi)$.
Since it appears in $\pi$ with multiplicity one, it is not contained in the kernel.


On the other hand, any other irreducible subrepresentation of $\pi$ is necessarily contained in the kernel of $N_{w}(\lambda_0)\res{\pi}$.
More precisely,
\begin{itemize}
	\item
	Assume that $N_{w}(\lambda_0)\res{\pi}$ is not injective and let $\tau\neq 0$  be an irreducible subrepresentation of $\ker N_{w}(\lambda_0)\res{\pi}$.
	In particular, $\tau\neq\sigma$, since $\sigma$ appears in $\pi$ with multiplicity $1$.
	Hence, $\sigma\oplus\tau$ is the maximal semi-simple subrepresentation of $\pi$.
	
	\item
	Assume that $N_{w}(\lambda_0)\res{\pi}$ is injective.
	It follows that $\pi\hookrightarrow \Ind_T^G\lambda_1$.
	In this case, any subrepresentation of $\pi$, is a subrepresentation of $\Ind_T^G\lambda_1$.
	Since $\sigma$ is the unique subquotient of $\pi$ with that property, it follows that $\sigma$ is the unique irreducible subrepresentation of $\pi$.
\end{itemize}

\vspace{0.3cm}

It remains to show that there exists a choice of $(\sigma, w, \lambda_1)$ which satisfy the stated properties and determine the injectivity of $N_{w}(\lambda_0)\res{\pi}$.


First, let $\sigma$ be the unique (by \Cref{Lemma::unique anti}) subquotient of $\pi$ such that $\lambda_{a.d.}\leq \jac{G}{T}{\sigma}$.
In fact, by \Cref{Lemma::unique anti}, $\sigma$ appears in the principal series $\Ind_T^G\lambda_0$ with multiplicity one.
Fix a $\sigma$-dominated sequence $\set{f_{k}}_{k=0}^{n}$ in $\mathcal{S}$ constructed by the process described in \Cref{subsection :: nonreg}.

We prove the $\sigma$ admits an embedding into $\pi$ as follows:
\begin{enumerate}
	\item In the case $\pi=\Ind_{M_{2}}^{G}(\Omega_{M_{2},-1})$, it holds that $f_n(\lambda_0)= 1$.
	By the central character argument, see \cite[Lemma. 3.12]{E6}, it follows that
	\[
	\sigma \hookrightarrow \Ind_T^G\lambda_0.
	\]
	Since $\sigma$ appears in $\Ind_T^G\lambda_0$ with multiplicity one, it follows that $\sigma$ is a subrepresentation of $\pi$.
	
	\item
	The representation $\pi=\Ind_{M_{4}}^{G}(\Omega_{M_{4},0})$ is unitary and hence semi-simple.
\end{enumerate}

We point out that the triple $(\sigma,w,\lambda_{a.d.})$, with $w$ to be the shortest Weyl word such that $w\cdot\lambda_0=\lambda_{a.d.}$, satisfies the assumptions given above.
However, in order to simplify the calculation of $\ker N_{w}(\lambda_0)\res{\pi}$
 as much as possible, it is preferable to choose $w$ and $\lambda_1$ so that $w$ will be as short as possible (in terms of the length function on $\weyl{G}$).
To that end, we consider all $w\in W^{M,T}$ such that
\[
f_n(w\cdot \lambda_0) = f_\pi(w\cdot \lambda_0) \neq 0
\]
and choose a $w$ of minimal length together with $\lambda_1=w\cdot\lambda_0$.
In this case, it holds that
\[
\inner{\lambda_0,\check{\gamma}}>0 \qquad \forall \gamma \in R(w)
\]
and hence, the operator $N_w(\lambda)$ is holomorphic at $\lambda_0$.
In particular, $N_{w}(\lambda_0)\res{\pi}$ is non-zero by the Gindikin-Karpelevich formula (see \cite[Chapter 4]{MR0419366}).

In the following table, we list the relevant data ($\lambda_{a.d.}$, $\lambda_0$, $\lambda_1$ and $w$) for each case.
\begin{longtable}[H]{|c|c|c|}
	\hline
	& $\Ind_{M_2}^{G}(\Omega_{M_{2},-1})$  & $\Ind_{M_4}^{G}(\Omega_{M_{4},0})$    \\ \hline
	$\lambda_{a.d}$ & $\esevenchar{-1}{0}{0}{-1}{0}{0}{-1}$
	&
	$\esevenchar{0}{0}{0}{ 0}{-1}{0}{0}$ \\  
	\hline
	$\lambda_0$ & 
	$\esevenchar{-1}{5}{-1}{-1}{-1}{-1}{-1}$ &
	$\esevenchar{-1}{-1}{-1}{ 3}{-1}{-1}{-1}$
	\\
	\hline
	$\lambda_1$ & 
	$\esevenchar{-1}{-1}{3}{-1}{-1}{-1}{-1}$ &
	$\esevenchar{-1}{-1}{-1}{2}{-1}{-1}{-1}$ 
	\\
	\hline
	$w$
	&
	$\s{\alpha_7}\s{\alpha_6}\s{\alpha_5}\s{\alpha_4}\s{\alpha_2}$&
	$\s{\alpha_7}\s{\alpha_6}\s{\alpha_5}\s{\alpha_4}\s{\alpha_3}\s{\alpha_4}\s{\alpha_4}$ 
	\\ \hline 
	\caption{Proof \Cref{Prop::E7::iwahori} }
	\label{tab::Iwahori}
\end{longtable}
      
It remains to determine whether $N_{w}(\lambda_0)\res{\pi}$ is injective or not.
This is done in \Cref{section:: iwahori hecke algebra} with the following conclusions:
\begin{enumerate}
	\item In case (1) it is shown that  $N_{w}(\lambda_0)\res{\pi}$ is not injective and hence $\pi=\Ind_{M_{2}}^{G}(\Omega_{M_2,-1})$ admits a maximal semi-simple subrepresentation of length $2$.
	\item In case (2),  $N_{w}(\lambda_0)\res{\pi}$ is shown to be injective and hence $\pi=\Ind_{M_{4}}^{G}(\Omega_{M_4,0})$ admits a unique irreducible subrepresentation.
	Hence, it is irreducible.
\end{enumerate}

\end{proof}


\begin{appendices}
\appendix
\appendixpage

\section{Degenerate Principal Series of Levi Subgroups }\label{App:knowndata}
In this section, we list various branching rules used in this article.
We start by explaining how new branching rules can be inferred using the Aubert involution. We then make a list of various branching rules associated with Levi subgroups of $G$, organized by the type of the Levi subgroup. Most of these rules arise from irreducible degenerate principal series of the Levi subgroup, while some are proven by other methods.

Note that it is convenient to encode the branching rules in terms of the action of Weyl elements.
\subsection{Generalized Degenerate Principal Series}

Let $M$ be a Levi subgroup of a maximal parabolic subgroup of $H$. Let $\Omega$ be as in \Cref{formula:: character_form}. Let $\Pi_{triv,\Omega} = \Ind_{M}^{H}(\Omega)$. We set $\Pi_{St,\Omega}= \Ind_{M}^{H}(St_{M}\otimes \Omega)$, 
where 
$St_{M}$ is the Steinberg representation of $M$,
 to be the generalized degenerate principal series associated with $\Pi_{triv,\Omega}$, i.e. $\Pi_{St}$ is the Aubert involution of $\Pi_{triv,\Omega}$. By \cite[Lemma 3.1]{MR1951440}, the representation $\Pi_{triv,\Omega}$ is irreducible if and only if $\Pi_{St,\Omega}$ is irreducible. 
 
 Suppose that $\Pi_{triv,\Omega}$ is irreducible. Let $\lambda_{0}=\jac{M}{T}{\Omega}$ be the leading exponent of $\Pi_{triv,\Omega}$ and let $\lambda_{a.d}$ be an anti-dominant exponent of $\Pi_{triv}$. Since $\Pi_{triv,\Omega}$ is irreducible, 
 \[\mult{\lambda_{a.d}}{\jac{H}{T}{\Pi_{triv,\Omega}}}=|Stab_{\weyl{H}}(\lambda)|.\]
 Thus, there exists a unique irreducible representation $\sigma$ of $H$ having $\mult{\lambda_{a.d}}{\jac{H}{T}{\sigma}} \neq 0$, namely,  $\sigma=\Pi_{triv,\Omega}$. Fix $u \in \weyl{H}$ such that $u \lambda_0 =\lambda_{a.d}$.  \Cref{Lemma::geomtric_lema} implies that 
 \[\jac{G}{T}{\Pi_{triv,\Omega}} = \sum_{w\in W^{M,T}} \coset{w\lambda_0}= 
 \sum_{w \in W^{M,T}} \coset{wu^{-1}\lambda_{a.d}}.\]

Since $\Pi_{triv,\Omega}$ is irreducible, so is $\Pi_{St,\Omega}$. Let $\lambda_1 = \jac{M}{T}{St \otimes \Omega} = \Omega\res{T} + \rho_{M}$, and let $\lambda_{d} $ be a dominant exponent in the orbit $\weyl{H} \cdot \lambda_1$, i.e.
for every $\alpha \in \Delta_{H}$ one has $\Real(\inner{\lambda_{d},\check{\alpha}}) \geq 0$.
Using \cite[Subsection 5.3.2]{Lie}, it follows that $\lambda_{d} \leq \jac{H}{T}{\Pi_{St,\Omega}}$. 
Fix $u_{d} \in \weyl{H}$, such that $u_{d}\lambda_1 =\lambda_{d}$ 
then 
\[\jac{H}{T}{\Pi_{St,\Omega}}=\sum_{w \in W^{M,T}} \coset{w \lambda_1} =
\sum_{w \in W^{M,T}} \coset{w u_{d}^{-1}\lambda_d}.\]

\subsection{Different types of Rules}
\subsubsection{Orthogonality Rule}
We recall the Orthogonality Rule from \cite[A.1]{E6}. Let $\lambda \in \bfX(T)$ and set $\Theta_{\lambda}= \set{\alpha \: : \:  \alpha \in \Delta_{G}, \; \inner{\lambda,\check{\alpha}}=0}$. Then 
\begin{framed}
\begin{equation}\tag{OR}
\label{Eq:OR}
\lambda \leq\jac{H}{T}{\pi} \Longrightarrow \Card{W_{M_{\Theta_\lambda}}} \times \coset{\lambda} \leq \coset{\jac{H}{T}{\pi}} .
\end{equation}
\end{framed}
A direct consequence of \eqref{Eq:OR} is the following Lemma.
\begin{Lem}\label{Lemma::unique anti}
Let $M$ be a maximal  Levi subgroup of $G$, $\Omega \in \bfX(M)$ such that $\Omega=\Real(\Omega)$ and let $\pi =\Ind_{M}^{G}(\Omega)$. Then: 
\begin{enumerate}[ref=\Cref{Lemma::unique anti}.(\arabic*)]
\item\label{Lemma::unique anti::1}
$\jac{H}{T}{\pi}$ contains a unique anti-dominant exponent $\lambda_{a.d}$.
\item \label{Lemma::unique anti::2}
Let $\sigma$ be an irreducible constituent of $\pi$, having $\mult{\lambda_{a.d}}{\jac{H}{T}{\sigma}}\neq 0$. Then
$\mult{\lambda_{a.d}}{\jac{H}{T}{\pi}} = \mult{\lambda_{a.d}}{\jac{H}{T}{\sigma}} =|\Stab_{\weyl{H}}(\lambda_{a.d})|$.
\item \label{Lemma::unique anti::3}
$\sigma$ is the unique irreducible representation of $H$ with the property $\lambda_{a.d} \leq \jac{H}{T}{\sigma}$.
\end{enumerate}   
\end{Lem}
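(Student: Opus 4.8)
The plan is to compute $\coset{\jac{G}{T}{\pi}}$ explicitly from the Geometric Lemma and then push the Orthogonality Rule \eqref{Eq:OR} through a multiplicity count. Applying \Cref{Lemma::geomtric_lema} with $L=T$ (so that the intermediate Levi subgroups are $M'=L'=T$) yields
\[
\coset{\jac{G}{T}{\pi}} = \suml_{w\in W^{M,T}} \coset{w\cdot\lambda_0}, \qquad \lambda_0 = \jac{M}{T}{\Omega},
\]
so every exponent of $\pi$ lies in $\weyl{G}\cdot\lambda_0$, and, since $\Omega=\Real(\Omega)$, every exponent is real. The one nontrivial input is the sign estimate $\inner{\lambda_0,\check{\gamma}}<0$ for every $\gamma\in\Phi_M^+$: by \eqref{formula:: character_form} and \eqref{formula::leadingexp} one has $\lambda_0=(s+\chi)\circ\fun{\alpha_i}-\rho_M$, where $\Delta_M=\Delta_G\setminus\set{\alpha_i}$, and $\inner{\fun{\alpha_i},\check{\gamma}}=0$ because $\check{\gamma}$ is a non-negative integer combination of the simple coroots $\check{\alpha_j}$ with $\alpha_j\in\Delta_M$, whereas $\inner{\rho_M,\check{\gamma}}$ equals the height of $\check{\gamma}$ in the coroot system of $M$, hence is $\geq 1$.

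For part (1), the closed anti-dominant cone is a fundamental domain for $\weyl{G}$ on real characters (see \Cref{section :: notations}), so the orbit $\weyl{G}\cdot\Real(\lambda_0)$ contains a unique anti-dominant element $\nu$; together with the previous paragraph this forces uniqueness of the anti-dominant exponent. For existence I would write $\nu=w_0\cdot\lambda_0$ with $w_0\in\weyl{G}$ and show $w_0\in W^{M,T}$: if some $\beta\in\Phi_M^+$ had $w_0\beta<0$, then $\inner{\lambda_0,\check{\beta}}=\inner{\nu,w_0\check{\beta}}\geq 0$, since $w_0\check{\beta}$ is a negative coroot and $\nu$ is anti-dominant, contradicting the sign estimate. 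The identical argument shows more: \emph{every} $w\in\weyl{G}$ with $w\cdot\lambda_0=\nu$ already lies in $W^{M,T}$, so
\[
\mult{\nu}{\jac{G}{T}{\pi}} = \Card{\set{w\in W^{M,T}\mvert w\cdot\lambda_0=\nu}} = \Card{\set{w\in\weyl{G}\mvert w\cdot\lambda_0=\nu}} = \Card{\Stab_{\weyl{G}}(\lambda_0)} = \Card{\Stab_{\weyl{G}}(\nu)},
\]
and since $\nu$ is anti-dominant and real, $\Stab_{\weyl{G}}(\nu)=\weyl{M_{\Theta_{\nu}}}$. This already establishes the multiplicity formula of part (2) for $\pi$ itself.

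For part (2), let $\sigma\leq\pi$ be an irreducible constituent with $\mult{\lambda_{a.d}}{\jac{G}{T}{\sigma}}\neq 0$. The Orthogonality Rule \eqref{Eq:OR} applied to $\sigma$ gives $\mult{\lambda_{a.d}}{\jac{G}{T}{\sigma}}\geq\Card{\weyl{M_{\Theta_{\lambda_{a.d}}}}}$, while exactness of the Jacquet functor and $\sigma\leq\pi$ give $\mult{\lambda_{a.d}}{\jac{G}{T}{\sigma}}\leq\mult{\lambda_{a.d}}{\jac{G}{T}{\pi}}=\Card{\weyl{M_{\Theta_{\lambda_{a.d}}}}}$ by the previous step; all three agree. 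For part (3), suppose $\sigma'$ is any irreducible representation of $G$ with $\lambda_{a.d}\leq\jac{G}{T}{\sigma'}$. Choosing a one-dimensional (character) quotient $\nu'$ of the finite-length $T$-module $\jac{G}{T}{\sigma'}$ and applying \eqref{formula::Frobenius} gives $\sigma'\hookrightarrow\Ind_T^G(\nu')$; since $\lambda_{a.d}$ is then an exponent of $\Ind_T^G(\nu')$ we get $\nu'\in\weyl{G}\cdot\lambda_{a.d}$, so $\sigma'$ is a subquotient of $\Ind_T^G(\lambda_{a.d})$. Principal series attached to characters in one $\weyl{G}$-orbit have the same Jordan--H\"older constituents, and $\pi\hookrightarrow\Ind_T^G(\lambda_0)$ shows $\sigma$ is among them; moreover $\mult{\lambda_{a.d}}{\jac{G}{T}{\Ind_T^G(\lambda_{a.d})}}=\Card{\Stab_{\weyl{G}}(\lambda_{a.d})}=\Card{\weyl{M_{\Theta_{\lambda_{a.d}}}}}$ by \Cref{Lemma::geomtric_lema} with $M=T$. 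By \eqref{Eq:OR}, any constituent of $\Ind_T^G(\lambda_{a.d})$ carrying $\lambda_{a.d}$ as an exponent carries it with full multiplicity $\Card{\weyl{M_{\Theta_{\lambda_{a.d}}}}}$, hence there is exactly one such constituent; comparing with part (2) it must be $\sigma$, so $\sigma'=\sigma$.

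The steps are short once the sign estimate is in hand; I expect the only real care to be needed in (i) checking that the anti-dominant representative of the orbit is genuinely realized as an exponent, i.e.\ that $w_0\in W^{M,T}$, and (ii) in part (3), verifying that ``irreducible representation of $G$ with $\lambda_{a.d}$ as an exponent'' coincides with ``Jordan--H\"older constituent of $\Ind_T^G(\lambda_{a.d})$'' --- both of which reduce to the sign estimate together with the classical invariance of the composition series along a $\weyl{G}$-orbit.
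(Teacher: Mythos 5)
Your proof is correct and follows essentially the same route as the paper: the paper obtains the existence of $\lambda_{a.d}$ and the multiplicity formula $\mult{\lambda_{a.d}}{\jac{G}{T}{\pi}}=\Card{\Stab_{\weyl{G}}(\lambda_{a.d})}$ by citing \cite[Lemma 3.4]{E6} (which you re-derive inline via the Geometric Lemma and the sign estimate $\inner{\lambda_0,\check{\gamma}}<0$ for $\gamma\in\Phi_M^+$), gets uniqueness from $\Omega=\Real(\Omega)$ and the fundamental-domain property, and deduces (2) and (3) from the Orthogonality Rule exactly as you do. Your only additions are to spell out the steps the paper leaves implicit, in particular that any irreducible representation carrying $\lambda_{a.d}$ as an exponent is a constituent of $\Ind_T^G(\lambda_{a.d})$, whose Jordan--H\"older series agrees with that of $\Ind_T^G(\lambda_0)\supseteq\pi$.
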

\begin{proof}
\begin{enumerate}
\item
The existence of $\lambda_{a.d}$ follows from \cite[Lemma 3.4]{E6}. The uniqueness is due to the assumption $\Real(\Omega)=\Omega$.
\item
Follows from \eqref{Eq:OR}.
\item
Follows from the second part.
\end{enumerate}
\end{proof}
 
\subsubsection{  Rules Coming From Levi Subgroups Of Type $A_n$}
We fix the following labeling of the Dynkin diagram of a group $H$ of type $A_n$. 
\[
\begin{tikzpicture}[scale=0.5]
\draw (-1,0) node[anchor=east]{};
\draw (0 cm,0) -- (4cm,0);
\draw[dashed] (7 cm, 0) -- (4cm,0);
\draw (7 cm,0) -- (9cm,0);
\draw[fill=black] (0 cm, 0 cm) circle (.25cm) node[below=4pt]{$\alpha_1$};
\draw[fill=black] (2 cm, 0 cm) circle (.25cm) node[below=4pt]{$\alpha_2$};
\draw[fill=black] (4 cm, 0 cm) circle (.25cm) node[below=4pt]{$\alpha_3$};
\draw[fill=black] (7 cm, 0 cm) circle (.25cm) node[below=4pt]{$\alpha_{n-1}$};
\draw[fill=black] (9 cm, 0 cm) circle (.25cm) node[below=4pt]{$\alpha_{n}$};
\end{tikzpicture}
\]

We recall the Branching rule of type $A_2$ \cite[A.3]{E6}
\begin{framed}
	\begin{equation}\tag{$A_2$}\label{Eq::A2}
	\lambda\leq \jac{H}{T}{\pi},\ \gen{\lambda,\check{\alpha}} = \pm 1,\  \gen{\lambda,\check{\beta}} = 0 \quad \Longrightarrow \quad 2\times\coset{\lambda}+\coset{s_\alpha\cdot\lambda} \leq \coset{\jac{H}{T}{\pi}} .
	\end{equation}
\end{framed}

As a consequence, one gets 
\begin{Cor}\label{A2:: two_reps}
Suppose that $H$ is a group of type $A_2$.
Let $\pi = \Ind_{T}^{H}(\lambda)$ where $\lambda \in \set{\pm \fun{1}}$.  Then $\pi$ is of length two. Set $\sigma_{A_2}^{1}$ to be the unique irreducible subquotient of $\pi$ having   $\mult{\lambda}{\jac{H}{T}{\pi}}\neq 0$ and let $\sigma_{A_2}^{2}$ denote the other one. Then, one has that,
\[
\begin{split}
\jac{H}{T}{\sigma_{A_2}^{1}} &= 2 \times \coset{\lambda} +  \coset{\s{\alpha_1}\lambda} \\
\nonumber
\jac{H}{T}{\sigma_{A_2}^{2}} &= 2 \times \coset{\s{\alpha_2}\s{\alpha_1}\lambda} +  \coset{\s{\alpha_1}\lambda}.
\end{split}\]

Moreover, there are exactly two irreducible representations $\sigma$ of $G$ having $\s{\alpha_1}\lambda \leq \jac{G}{T}{\sigma}$.
\end{Cor}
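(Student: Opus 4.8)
The plan is to reduce the whole statement to a multiplicity count in the Jacquet module $\jac{H}{T}{\pi}$, which is controlled by \Cref{Lemma::geomtric_lema} and the branching rule \eqref{Eq::A2}. I would assume throughout that $\lambda=\fun{1}$; the case $\lambda=-\fun{1}$ is the contragredient situation and is handled in exactly the same way. First I would record the combinatorics: since $\inner{\fun{1},\check{\alpha_1}}=1$ and $\inner{\fun{1},\check{\alpha_2}}=0$, the stabilizer $\Stab_{\weyl{H}}(\lambda)=\inner{\s{\alpha_2}}$ has order $2$, so the orbit $\weyl{H}\cdot\lambda=\set{\lambda,\ \s{\alpha_1}\lambda,\ \s{\alpha_2}\s{\alpha_1}\lambda}$ consists of three distinct elements, the last of which is antidominant. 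Then \Cref{Lemma::geomtric_lema} with $L=M=T$ (so that $W^{T,T}=\weyl{H}$ and each orbit point occurs $\Card{\Stab_{\weyl{H}}(\lambda)}=2$ times) gives
\[
\coset{\jac{H}{T}{\pi}}=2\coset{\lambda}+2\coset{\s{\alpha_1}\lambda}+2\coset{\s{\alpha_2}\s{\alpha_1}\lambda},
\]
and in particular $\mult{\mu}{\jac{H}{T}{\pi}}=2$ for every $\mu$ in the orbit.

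Next I would fix the two constituents. Since $\inner{\lambda,\check{\alpha_1}}=1$, the operator $\M_{\s{\alpha_1}}(\lambda)$ has a nonzero kernel by \ref{properties::intertwining::3}, so $\pi$ is reducible; I would then invoke the classical fact that $\pi$ has length exactly $2$, that its unique irreducible subrepresentation $\sigma_{A_2}^1$ is distinct from its unique irreducible quotient $\sigma_{A_2}^2$, and that $\sigma_{A_2}^2$ is the constituent whose Jacquet module contains the antidominant exponent $\s{\alpha_2}\s{\alpha_1}\lambda$ (this is the standard decomposition of a principal series of a group of type $A_2$ at such a point; see e.g. \cite{MR584084} for $GL_3$, the case at hand following by the same method). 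Note that $\sigma_{A_2}^1$ is exactly the irreducible subrepresentation of $\pi$ by \ref{Prop::unique::1}.

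With the two constituents in hand, I would compute their Jacquet modules by ``lower bounds plus counting''. Applying \eqref{Eq::A2} to $\sigma_{A_2}^1$ at $\lambda$ (with $\alpha=\alpha_1$, $\beta=\alpha_2$) gives $2\coset{\lambda}+\coset{\s{\alpha_1}\lambda}\le\jac{H}{T}{\sigma_{A_2}^1}$; symmetrically, applying \eqref{Eq::A2} to $\sigma_{A_2}^2$ at $\s{\alpha_2}\s{\alpha_1}\lambda$ (with $\alpha=\alpha_2$, $\beta=\alpha_1$, and using $\s{\alpha_2}\cdot\s{\alpha_2}\s{\alpha_1}\lambda=\s{\alpha_1}\lambda$) gives $2\coset{\s{\alpha_2}\s{\alpha_1}\lambda}+\coset{\s{\alpha_1}\lambda}\le\jac{H}{T}{\sigma_{A_2}^2}$. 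Since $\sigma_{A_2}^1\ne\sigma_{A_2}^2$ are both constituents of $\pi$, the sum of these two lower bounds is $\le\coset{\jac{H}{T}{\pi}}$; but that sum already equals $\coset{\jac{H}{T}{\pi}}$, so both inequalities are equalities, which is precisely the asserted formula (and reconfirms $[\pi]=[\sigma_{A_2}^1]+[\sigma_{A_2}^2]$). For the final assertion I would argue: if $\sigma$ is any irreducible representation of $H$ with $\s{\alpha_1}\lambda\le\jac{H}{T}{\sigma}$, then Frobenius reciprocity \eqref{formula::Frobenius} yields an embedding $\sigma\hookrightarrow\Ind_{T}^{H}(\s{\alpha_1}\lambda)$, and $\Ind_{T}^{H}(\s{\alpha_1}\lambda)$ has the same composition factors as $\pi=\Ind_{T}^{H}(\lambda)$ because both are induced from characters in the single $\weyl{H}$-orbit $\weyl{H}\cdot\lambda$; hence $\sigma\in\set{\sigma_{A_2}^1,\sigma_{A_2}^2}$, and since $\s{\alpha_1}\lambda$ occurs in both Jacquet modules there are exactly two such $\sigma$.

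The step I expect to be the real obstacle is the length-two statement together with the precise socle structure — equivalently, the fact that the irreducible subrepresentation $\sigma_{A_2}^1$ does not contain the antidominant exponent of $\pi$. This does not seem to be forced by the branching-rule toolkit of this paper, so I would either cite it from the classical theory of rank-two principal series, or re-derive it self-containedly via Casselman's description of the Jacquet module of a contragredient together with \ref{Prop::unique::1} (if $\sigma_{A_2}^1$ did contain $\s{\alpha_2}\s{\alpha_1}\lambda$, the bounds above would pin down $\jac{H}{T}{\sigma_{A_2}^2}=\coset{\s{\alpha_1}\lambda}$, and one then gets a contradiction by comparing $\widetilde{\sigma_{A_2}^2}$, which must be the unique irreducible subrepresentation of $\widetilde{\pi}\cong\Ind_{T}^{H}(-\lambda)$, with the leading exponent of $\Ind_{T}^{H}(-\lambda)$).
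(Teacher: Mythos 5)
Your proof is correct, and its core step --- applying \eqref{Eq::A2} once at $\lambda$ to $\sigma_{A_2}^{1}$ and once at the antidominant exponent $\s{\alpha_2}\s{\alpha_1}\lambda$ to $\sigma_{A_2}^{2}$, then comparing the combined dimension of the two lower bounds with $\Card{\weyl{H}}=6$ to force equality --- is exactly the paper's argument. The one point where you diverge is the step you yourself flag as the obstacle, namely that the constituent carrying $\lambda$ is distinct from the one carrying $\s{\alpha_2}\s{\alpha_1}\lambda$: you import this from the intertwining-operator criterion \ref{properties::intertwining::3} together with the classical length-two theory for $GL_3$ (or a contragredient argument), whereas the paper stays inside its own toolkit by recalling that rule \eqref{Eq::A2} originates in the irreducibility of the degenerate principal series $\Ind_{M_{\alpha_2}}^{H}(\Omega_{-1})$, whose full Jacquet module is exactly $2\times\coset{\lambda}+\coset{\s{\alpha_1}\lambda}$; hence $\sigma_{A_2}^{1}$ is that representation, its Jacquet module omits the antidominant exponent, and distinctness (so length at least two) is automatic. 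Note also that the statement only concerns subquotients, so the ``precise socle structure'' you worry about (which constituent is the subrepresentation) is more than is actually needed --- distinctness alone suffices, and both your counting and the paper's then pin down the Jacquet modules and the length simultaneously.
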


\begin{proof}
Suppose $\lambda =-\fun{1}$. Applying (\ref{Eq::A2}) with respect to Levi subgroup $M_{\alpha_1}$ implies that 
\[\jac{H}{T}{\sigma_{A_2}^{1}} \geq 2 \times\coset{\lambda} + \coset{\s{\alpha_1}\lambda}.\]
Recall that the rule \eqref{Eq::A2} comes from the fact that the representation $\Ind_{M_{\alpha_2}}^{H}(\Omega_{-1})$ is irreducible.  Thus $\pi$, is at least of length $2$. Let $\sigma_{A_2}^{2}$ be an irreducible subquotient of $\pi$ having $\mult{\s{\alpha_2}\s{\alpha_1}\lambda} {\jac{H}{T}{\sigma_{A_2}^{2}}}\neq 0$. Applying \eqref{Eq::A2} on $\s{\alpha_2}\s{\alpha_1}\lambda$ with respect to the Levi subgroup $M_2 =M_{\alpha_2}$ implies that 
\[\jac{H}{T}{\sigma_{A_2}^{2}} \geq 2 \times \coset{\s{\alpha_2}\s{\alpha_1}\lambda} +  \coset{\s{\alpha_1} \lambda}.\]
Thus, 
\[\jac{H}{T}{\pi} \geq \jac{G}{T}{\sigma_{A_2}^{1}} + \jac{H}{T}{\sigma_{A_2}^{2}}.\]
In particular $\dim \jac{H}{T}{\pi} \geq 6$ since $|W_{H}| =6 $ we deduce that  \[\coset{\jac{H}{T}{\pi}} = \coset{\jac{H}{T}{\sigma_{A_2}^{1}}} +\coset{ \jac{H}{T}{\sigma_{A_2}^{2}}}.\]
\end{proof}
\begin{Lem}\label{A_3::example}
Let $\pi$ be an irreducible representation of a group $H$ of type $A_3$ having 
$\lambda \leq \jac{H}{T}{\pi}$ where
$\inner{\lambda,\check{\alpha_1}} =\pm 1 \quad \text{and} \quad  
\inner{\lambda,\check{\alpha_2}} =
\inner{\lambda,\check{\alpha_3}} =0$. Then, 

\begin{framed}
	\begin{equation} \tag{A.3(a)}
	\lambda\leq \jac{H}{T}{\pi}
	\quad \Longrightarrow  \quad
	6\times\coset{\lambda}
	+4\times \coset{\s{\alpha_1}\cdot\lambda}+ 2\coset{\s{\alpha_2}\s{\alpha_1}\cdot\lambda} \leq \coset{\jac{G}{T}{\pi}}
	\end{equation}
\end{framed}
\end{Lem}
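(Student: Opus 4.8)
The plan is to pin down the unique irreducible representation of $H$ whose Jacquet module contains $\lambda$, and then to compute that Jacquet module by the Geometric Lemma (\Cref{Lemma::geomtric_lema}), much as in the proof of \Cref{A2:: two_reps}. Set $\epsilon=\gen{\lambda,\check{\alpha_1}}\in\{\pm1\}$ and let $\nu=\lvert\cdot\rvert$. Up to a finite-order twist and a central character, $\lambda$ is the character of $T$ which in $\GL_4$-coordinates has entries $(\nu^{\epsilon},1,1,1)$; since the first two entries differ by $\nu^{\pm1}$ and the last three coincide, the theory of principal series of the general/special linear group (\cite{MR584084,MR620252}) gives that $\Ind_{T}^{H}(\lambda)$ has length two with multiplicity one, and one of its constituents is
\[
\pi_{0}\;=\;\Ind_{M_{\set{\alpha_1}}}^{H}\bigl(\tau\bigr),\qquad \tau=\bigl(\lvert\det\rvert^{\epsilon/2}\Id_{\GL_2}\bigr)\boxtimes 1\boxtimes 1,
\]
a one-dimensional character of the (non-maximal) standard Levi $M_{\set{\alpha_1}}$. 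Concretely this follows by inducing in stages through $M_{\set{\alpha_1}}$ and using that, at the reducibility point $\gen{\cdot,\check{\alpha_1}}=\epsilon$, the irreducible socle of the rank-one principal series of the $\SL_2$ attached to $\alpha_1$ is the appropriate twist of its trivial representation; and $\pi_0$ is irreducible because the two Zelevinsky segments entering $\tau$ are unlinked.

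A direct check gives $\jac{M_{\set{\alpha_1}}}{T}{\tau}=\tau|_{T}-\rho_{M_{\set{\alpha_1}}}$, which equals $\lambda$ if $\epsilon=-1$ and equals $\s{\alpha_1}\lambda$ if $\epsilon=+1$. Hence, when $\epsilon=-1$, the constituent $\pi_0$ has $\lambda$ in its Jacquet module and so is the irreducible socle of $\Ind_{T}^{H}(\lambda)$; by Frobenius reciprocity any irreducible $\pi$ with $\lambda\leq\jac{H}{T}{\pi}$ embeds in $\Ind_{T}^{H}(\lambda)$ and therefore equals $\pi_0$. When $\epsilon=+1$ the same role is played by the \emph{other} constituent $\sigma$ of $\Ind_{T}^{H}(\lambda)$ (its Langlands quotient), which is again the unique irreducible representation of $H$ with $\lambda\leq\jac{H}{T}{\sigma}$. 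In both cases it suffices to compute the Jacquet module of this distinguished constituent.

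By the Geometric Lemma, $\coset{\jac{H}{T}{\pi_0}}=\sum_{w\in W^{M_{\set{\alpha_1}},T}}\coset{w\cdot\jac{M_{\set{\alpha_1}}}{T}{\tau}}$; writing the summands in $\GL_4$-coordinates, each $w\cdot\lambda$ (resp. $w\cdot\s{\alpha_1}\lambda$) is determined by the position of its distinguished coordinate, and counting the $w\in W^{M_{\set{\alpha_1}},T}=\set{w\mvert w(\alpha_1)>0}$ that carry it to each of the four elements of $\weyl{H}\cdot\lambda$ produces the multiplicities $6,4,2,0$ along $\lambda,\ \s{\alpha_1}\lambda,\ \s{\alpha_2}\s{\alpha_1}\lambda,\ \s{\alpha_3}\s{\alpha_2}\s{\alpha_1}\lambda$. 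For $\epsilon=-1$ this gives directly $\jac{H}{T}{\pi_0}=6\coset{\lambda}+4\coset{\s{\alpha_1}\lambda}+2\coset{\s{\alpha_2}\s{\alpha_1}\lambda}$. For $\epsilon=+1$ one instead subtracts this computation for $\pi_0$ (whose leading exponent is now $\s{\alpha_1}\lambda$) from $\coset{\jac{H}{T}{\Ind_{T}^{H}(\lambda)}}=6\sum_{\mu\in\weyl{H}\cdot\lambda}\coset{\mu}$, where $6=\lvert\Stab_{\weyl{H}}(\lambda)\rvert=\lvert\weyl{M_{\set{\alpha_2,\alpha_3}}}\rvert$ is as in \Cref{Lemma::unique anti}, and obtains the same expression for $\jac{H}{T}{\sigma}$. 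In either case this is an equality refining the asserted inequality.

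The step I expect to be the real obstacle is the identification of $\pi_0$ — that is, controlling the segment/socle structure of $\Ind_{T}^{H}(\lambda)$, which genuinely requires the $\GL_4$/$\SL_4$ theory and cannot be extracted from the formal rules \eqref{Eq:OR} and \eqref{Eq::A2} alone (those only yield the weaker lower bound $6\coset{\lambda}+3\coset{\s{\alpha_1}\lambda}+2\coset{\s{\alpha_2}\s{\alpha_1}\lambda}$). Once $\pi_0$ is in hand the Geometric Lemma computation is routine, the only subtlety being the case distinction on $\epsilon$.
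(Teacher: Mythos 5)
Your proof is correct in substance, but it takes a genuinely different route from the paper's. You identify the unique irreducible $\pi$ with $\lambda\leq\jac{H}{T}{\pi}$ explicitly --- as the induction $\Ind_{M_{\set{\alpha_1}}}^{H}(\tau)$ of a one-dimensional character of the $\GL_2\times\GL_1\times\GL_1$ Levi, irreducible by the Zelevinsky unlinked-segments criterion --- and then compute its Jacquet module \emph{exactly} by the Geometric Lemma (your counts $6,4,2,0$ over $W^{M_{\set{\alpha_1}},T}$, and the subtraction from $6\sum_{\mu\in\weyl{H}\cdot\lambda}\coset{\mu}$ in the case $\epsilon=+1$, both check out). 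The paper never identifies $\pi$: it works with an arbitrary irreducible $\pi$ containing $\lambda$, passes through the intermediate $A_2$-Levi $M_{\set{\alpha_1,\alpha_2}}$, and combines \eqref{Eq:OR}, \eqref{Eq::A2} and the decomposition of the $A_2$ degenerate principal series from \Cref{A2:: two_reps} to force $\jac{H}{M}{\pi}\geq 3\coset{\sigma_{A_2}^1}+\coset{\sigma_{A_2}^2}$, whence the stated inequality. In particular your closing claim --- that the formal rules cannot get past $3\coset{\s{\alpha_1}\lambda}$ and that the $\GL_4$ segment theory is genuinely required --- is not accurate: the paper's own proof upgrades $3$ to $4$ by applying \eqref{Eq:OR} to $\s{\alpha_1}\lambda$ (whose stabilizer contains $\weyl{M_{\set{\alpha_3}}}$, forcing the multiplicity to be even), entirely within the branching-rule calculus. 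What your approach buys is an equality rather than an inequality, and a structural identification of the constituent; what the paper's approach buys is uniformity --- it stays inside the self-contained system of rules that the algorithm applies mechanically, and avoids any appeal to $\GL_4$ classification theory (which also sidesteps the mild issue that the type-$A_3$ groups $H$ occurring as Levi subgroups here are not literally $\GL_4$, so one would otherwise have to check that the segment analysis survives the passage to the relevant quotient/subgroup).
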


\begin{proof}
Let $M= M_{\set{\alpha_1,\alpha_2}}$ and write $\coset{\jac{H}{M}{\pi}} =\sum n_{\sigma} \times \coset{\sigma}$, where $\sigma$ are irreducible  non-equivalent  representations of $M$. 
\begin{itemize}
\item
Since $\lambda \leq \jac{H}{T}{\pi}$, applying (\ref{Eq:OR}) implies that $6 \times \coset{\lambda} \leq \jac{H}{T}{\pi}$.
\item
 On the other hand, since $|\Stab_{\weyl{H}}(\lambda)|=6$, it follows that 
\[\mult{\lambda}{\jac{H}{T}{\pi}}=6.\]
\item
By (\ref{Eq::A2}) it holds that, 
\[\lambda \leq \jac{H}{T}{\pi} \Rightarrow 2\times \coset{\lambda} + \coset{\s{\alpha_1}\lambda} \leq \coset{\jac{H}{T}{\pi}}\]
\item
Combing the above yields $\mult{\sigma_1^{A_2}}{\jac{H}{M}{\pi}}= 3$. In particular, $\mult{\s{\alpha_1}\lambda}{\jac{H}{T}{\pi}} \geq 3$.   
\item
Applying \eqref{Eq:OR} on $\s{\alpha_1}\lambda$ yields
\[|\weyl{{\Theta}}| \:  \divides \: \mult{\s{\alpha_1}\lambda}{\jac{H}{T}{\pi}}\]
where  
$\Theta = \set{\alpha \: : \: \alpha \in \Delta_{H} \: :  \:  \inner{\s{\alpha_1}\lambda,\check{\alpha}}=0}$.
Hence,  $\mult{\s{\alpha_1}\lambda}{\jac{H}{T}{\pi}}\geq 4$.
\item
Since, up to equivalence, the only irreducible representations $\tau$ of $M$ such that $\s{\alpha_1}\lambda \leq \jac{M}{T}{\tau}$ are $\sigma^{1}_{A_2}$ and $\sigma^{2}_{A_2}$ it follows that $n_{\sigma_{A_2}^{2}}\geq 1$. 
\item 
In summary 
\[\jac{H}{M}{\pi} \geq 3 \times \coset{\sigma_{A_2}^{1}} + 1 \times \coset{\sigma_{A_2}^{2}}.\]
\item
Thus,
\[
\begin{split}
\jac{H}{T}{\pi}  
\geq& 3 \times \coset{\jac{M}{T}{\sigma_{A_2}^{1}}} + 1 \times \coset{\jac{M}{T}{\sigma_{A_2}^{2}}} \\
=& 6 \times \coset{\lambda} + 3 \times \s{\alpha_1}\lambda + 2 \times\s{\alpha_2}\s{\alpha_1} \lambda + 1\s{\alpha_1}\lambda\\
=& 
6 \times \coset{\lambda} + 4 \times \s{\alpha_1}\lambda + 2 \times\s{\alpha_2}\s{\alpha_1} \lambda
\end{split}\]
\end{itemize}
\end{proof}
A similar argument yields the following, more general, rule

\begin{Lem}
Let $H$ be a group of type $A_n$ where $n\geq 2$ and let $\pi$ be an irreducible representation of $H$ having $\mult{\lambda}{\jac{H}{T}{\pi}} \neq0$, where $\lambda \in \set{\pm \fun{1}}$. Set $M =M_{\set{\alpha_1 ,\dots \alpha_{n-1}}}$. Then 
\begin{framed}
	\begin{equation}\tag{$A_n$}
	\lambda\leq \jac{H}{T}{\pi}
	\Longrightarrow
	\sum_{w \in W^{M,T}} 
	(n-l(w)) \cdot (n-1)! \coset{w \lambda}
	= \coset{\jac{H}{T}{\pi}}.
	\end{equation}
\end{framed}
\end{Lem}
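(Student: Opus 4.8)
The plan is to argue by induction on $n$, extending to general $A_n$ the computation carried out for $n=3$ in \Cref{A_3::example}; the base case $n=2$ is \Cref{A2:: two_reps}. Replacing $\lambda$ by $-\lambda$ if needed (equivalently, applying the diagram automorphism $\alpha_i\mapsto\alpha_{n+1-i}$ of $H$, or passing to contragredients), I may assume $\lambda=-\fun{1}$, which is anti-dominant. Put $\lambda_0=\lambda$ and $\lambda_k=\s{\alpha_k}\s{\alpha_{k-1}}\cdots\s{\alpha_1}\cdot\lambda$ for $1\le k\le n$; these are the $n+1$ distinct elements of $\weyl{H}\cdot\lambda$, the elements of $W^{M,T}$ (the minimal representatives of $\weyl{H}/\Stab_{\weyl{H}}(\lambda)$) have lengths $0,1,\dots,n$, one of each, and $w\cdot\lambda=\lambda_{l(w)}$. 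Since $\Theta_\lambda=\Delta_H\setminus\set{\alpha_1}$ is of type $A_{n-1}$, $\Stab_{\weyl{H}}(\lambda)=\weyl{M_{\Theta_\lambda}}$ has order $n!$. Arguing as in the opening of the proof of \Cref{A_3::example} — using that $\lambda$ is anti-dominant, so every irreducible with $\lambda$ as an exponent is a subquotient of $\Ind_{T}^{H}(\lambda)$, together with \eqref{Eq:OR} and \Cref{Lemma::geomtric_lema} — one sees that $\pi$ is the unique irreducible subquotient of $\Ind_{T}^{H}(\lambda)$ admitting $\lambda$ as an exponent, and that $\mult{\lambda}{\jac{H}{T}{\pi}}=\Card{\Stab_{\weyl{H}}(\lambda)}=n!$.

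Next, following the proof of \Cref{A_3::example}, I would establish the lower bound $\jac{H}{T}{\pi}\ge\sum_{k=0}^{n}(n-k)(n-1)!\coset{\lambda_k}$. Let $M'=M_{\Delta_H\setminus\set{\alpha_n}}$, a standard Levi of type $A_{n-1}$ whose simple roots contain $\alpha_1$, and decompose $\coset{\jac{H}{M'}{\pi}}=\sum_\sigma m_\sigma\coset{\sigma}$. The restriction of $\lambda$ to the derived group of $M'$ is again $\mp\fun{1}$ for the $A_{n-1}$ root datum, so the inductive hypothesis — strengthened to describe \emph{all} the relevant irreducible constituents and their Jacquet modules, cf. the last paragraph — determines $\jac{M'}{T}{\sigma}$ for every irreducible constituent $\sigma$ of $\jac{H}{M'}{\pi}$ having an exponent in $\weyl{H}\cdot\lambda$, of which only finitely many occur. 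As in the $n=3$ case, \eqref{Eq:OR}, the rule \eqref{Eq::A2} and the value $\mult{\lambda}{\jac{H}{T}{\pi}}=n!$ then pin down the relevant multiplicities $m_\sigma$, and summing $\sum_\sigma m_\sigma\,\jac{M'}{T}{\sigma}$ and simplifying (the key numerical identity being the one already verified for $n=3$) yields the stated lower bound.

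For the reverse inequality I would use that $\Ind_{T}^{H}(\lambda)$ has length exactly $2$ — which follows from induction in stages $\Ind_{T}^{H}(\lambda)=\Ind_{M_{\Theta_\lambda}}^{H}\bk{\Ind_{T}^{M_{\Theta_\lambda}}(\lambda)}$ and the known structure of degenerate principal series of $\mathrm{GL}$- and $\mathrm{SL}$-type Levi subgroups from \cite{MR584084,MR620252} — say with constituents $\pi$ and $\pi'$. Applying the argument of the previous paragraph to $\pi'$, with the dominant orbit element $\lambda_n$ in place of $\lambda_0$ (equivalently, the diagram-flipped character), gives $\jac{H}{T}{\pi'}\ge\sum_{k=0}^{n}k\,(n-1)!\coset{\lambda_k}$. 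By \Cref{Lemma::geomtric_lema}, $\jac{H}{T}{\Ind_{T}^{H}(\lambda)}=\sum_{w\in\weyl{H}}\coset{w\lambda}=n!\sum_{k=0}^{n}\coset{\lambda_k}$, and since this equals $\jac{H}{T}{\pi}+\jac{H}{T}{\pi'}$ while the two lower bounds already sum to it, both inequalities are equalities. Hence $\jac{H}{T}{\pi}=\sum_{k=0}^{n}(n-k)(n-1)!\coset{\lambda_k}=\sum_{w\in W^{M,T}}(n-l(w))(n-1)!\coset{w\lambda}$, as asserted.

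I expect the main obstacle to be the bookkeeping in the second paragraph: determining, for general $n$, precisely which irreducible representations of $M'$ occur in $\jac{H}{M'}{\pi}$ with an exponent in $\weyl{H}\cdot\lambda$ and with what multiplicities (for $n=3$ this was the explicit identification $\jac{H}{M'}{\pi}=3\coset{\sigma^{1}_{A_2}}+\coset{\sigma^{2}_{A_2}}$, which genuinely used the second constituent $\sigma^{2}_{A_2}$). A natural way to control this is to prove, by simultaneous induction, a companion statement describing all irreducible constituents of the relevant $A_{n-1}$ principal series together with their Jacquet modules (generalizing \Cref{A2:: two_reps}), and then read off the multiplicities $m_\sigma$ from it; the length-two assertion for $\Ind_{T}^{H}(\lambda)$, used for the upper bound, is the remaining delicate point, although it can be bypassed if one can instead bound $\dim\jac{H}{T}{\pi}\le (n+1)!/2$ directly from the inductive data.
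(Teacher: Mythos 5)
Your proposal is correct, and for its core it is the argument the paper intends: the paper offers no proof of the general rule beyond the sentence ``a similar argument yields'', referring to the $A_3$ case (\Cref{A_3::example}), and your inductive lower bound via the type-$A_{n-1}$ Levi $M'=M_{\Delta_H\setminus\set{\alpha_n}}$ is precisely that generalization. The bookkeeping you flag as the main obstacle does close: $\mult{\lambda}{\jac{H}{T}{\pi}}=n!$ forces $n$ copies of $\sigma^1_{A_{n-1}}$ in $\jac{H}{M'}{\pi}$, the divisibility of $\mult{\s{\alpha_1}\lambda}{\jac{H}{T}{\pi}}$ by $(n-1)!$ coming from \eqref{Eq:OR} then forces at least one copy of $\sigma^2_{A_{n-1}}$, and $n\,\jac{M'}{T}{\sigma^1}+\jac{M'}{T}{\sigma^2}$ reproduces the claimed coefficients $(n-k)(n-1)!$; the companion statement you propose (all constituents of the $A_{n-1}$ principal series and their Jacquet modules, generalizing \Cref{A2:: two_reps}) is exactly the right strengthening of the induction hypothesis. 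Where you genuinely go beyond the paper is the equality: \Cref{A_3::example} is stated only as a lower bound, while the boxed rule \eqref{Eq:: genan} asserts an identity, and the paper never says where the upper bound comes from. Your argument --- run the same lower bound for the constituent $\pi'$ carrying the dominant exponent and observe that the two bounds already exhaust $\jac{H}{T}{\Ind_T^H(\lambda)}=n!\sum_k\coset{\lambda_k}$ --- supplies this cleanly, and as you note it even yields the length-two statement rather than requiring it as input (one only needs $\pi\neq\pi'$, i.e.\ reducibility of the principal series, which follows from linked segments or from property (P3) of the intertwining operators). One small point: for $w\mapsto w\lambda$ to be injective on $W^{M,T}$ one must take $M=M_{\set{\alpha_2,\dots,\alpha_n}}$, as in the boxed display \eqref{Eq:: genan}, rather than $M_{\set{\alpha_1,\dots,\alpha_{n-1}}}$ as in the lemma's statement; your description of $W^{M,T}$ as the minimal representatives of $\weyl{H}/\Stab_{\weyl{H}}(\lambda)$ shows you are implicitly using the correct Levi.
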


We record all  Branching rules of type $A_n$ that we used in this paper. For more information, one should consult \cite[Appendix A]{E6}.
The labeling  of the rule indicates which type of Levi subgroup we refer to.
\begin{framed}
\begin{equation}
\tag{$A_1$}
\lambda\leq \jac{H}{T}{\pi},\ \gen{\lambda,\check{\alpha}} \neq \pm 1 \Longrightarrow \coset{\lambda}+\coset{s_\alpha\cdot\lambda} \leq \coset{\jac{H}{T}{\pi}} .
\end{equation}
\end{framed}

\begin{framed}
	\begin{equation}\tag{$A_2$}
	\lambda\leq \jac{H}{T}{\pi},\ \lambda \in \set{\pm \fun{1}} \Longrightarrow 2\times\coset{\lambda}+\coset{\s{\alpha_1}\cdot\lambda} \leq \coset{\jac{H}{T}{\pi}} .
	\end{equation}
\end{framed}
\begin{framed}
	\begin{equation}
	\tag{$A_3$}
	\begin{array}{c}
	\lambda\leq \jac{H}{T}{\pi},\ 
	\lambda \in \set{\pm (\fun{1} -\fun{3})} \\ 
	\Longrightarrow 2\times\coset{\lambda} + \coset{\s{\alpha_1}\cdot\lambda} + \coset{\s{\alpha_3}\cdot\lambda} + 2\times \coset{\s{\alpha_1} \s{\alpha_3}\cdot\lambda} \leq \coset{\jac{H}{T}{\pi}} .
	\end{array}
	\label{Eq::A3b}
	\end{equation}
\end{framed}
\begin{framed}
	\begin{equation}\tag{$A_n$}
	\lambda\leq \jac{H}{T}{\pi}, \, \lambda=\pm\fun{1}
	\Longrightarrow
	\sum_{w \in W^{M,T}} 
	(n-l(w)) \cdot (n-1)! \coset{w \lambda}
	= \coset{\jac{H}{T}{\pi}}
	\label{Eq:: genan}
	\end{equation}
where $M= M_{\set{\alpha_2 \dots \alpha_{n}}}$.
\end{framed}

\subsubsection{  Rules Coming From Levi Subgroups Of Type $D_n$}

We fix the following labeling of the Dynkin diagram of a group $H$ of type $D_n$. 
\[\begin{tikzpicture}[scale=0.5]
\draw (0 cm,0) -- (4 cm,0);
\draw[dashed] (4 cm,0) -- (8 cm,0);
\draw (8 cm,0) -- (10 cm,0 cm);
\draw (8 cm,0) -- (8 cm,2 cm);
\draw[fill=black] (0 cm, 0 cm) circle (.25cm) node[below=4pt]{$\alpha_1$};
\draw[fill=black] (2 cm, 0 cm) circle (.25cm) node[below=4pt]{$\alpha_2$};
\draw[fill=black] (4 cm, 0 cm) circle (.25cm) node[below=4pt]{$\alpha_3$};
\draw[fill=black] (8 cm, 0 cm) circle (.25cm) node[below=4pt]{$\alpha_{n-2}$};
\draw[fill=black] (10 cm, 0 cm) circle (.25cm) node[below=4pt]{$\alpha_n$};
\draw[fill=black] (8 cm, 2 cm) circle (.25cm) node[right=3pt]{$\alpha_{n-1}$};
\end{tikzpicture}\]

\begin{Lem}
Let $H$ be a group of type $D_5$.
Let $\pi$ be an irreducible representation of $H$ having $\mult{\lambda}{\jac{G}{T}{\pi}}\neq 0$, where $\lambda =\fun{5}$. Then, one has the following rule
\[
\begin{split}
\lambda \leq \jac{H}{T}{\pi} & \Rightarrow 120 \times  \coset{\lambda} + 
96\times \coset{\s{\alpha_5}\lambda} + 
72\times \coset{\s{\alpha_3}\s{\alpha_5}\lambda}\\
& + 48\times \coset{\s{\alpha_2}\s{\alpha_3}\s{\alpha_5}\lambda}
+ 48\times \coset{\s{\alpha_4}\s{\alpha_3}\s{\alpha_5}\lambda}
+ 32\times \coset{\s{\alpha_4}\s{\alpha_2}\s{\alpha_3}\s{\alpha_5}\lambda}
\\
& + 24\times \coset{\s{\alpha_1}\s{\alpha_2}\s{\alpha_3}\s{\alpha_5}\lambda}
+ 16\times \coset{\s{\alpha_3}\s{\alpha_2}\s{\alpha_4}\s{\alpha_3}\s{\alpha_5}\lambda}
+ 16\times \coset{\s{\alpha_1}\s{\alpha_2}\s{\alpha_4}\s{\alpha_3}\s{\alpha_5}\lambda}\\
&+ 8\times \coset{\s{\alpha_3}\s{\alpha_1}\s{\alpha_2}\s{\alpha_4}\s{\alpha_3}\s{\alpha_5}\lambda}.
\end{split}
\]

\end{Lem}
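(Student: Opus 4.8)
The plan is to deduce this $D_5$-branching rule for $\pi$ from the fact that a suitable degenerate principal series of $H$ is irreducible, exactly in the spirit of the $A_n$-rules already established. First I would identify the relevant degenerate principal series: let $P_1$ be the maximal parabolic of $H$ obtained by deleting $\alpha_1$, so that $M_1\cong GL_1\times D_4$-type group (a $GSpin_8$-type Levi), and consider $\Ind_{M_1}^{H}(\Omega)$ for the one-dimensional $\Omega$ whose leading exponent lies in the $\weyl{H}$-orbit of $\fun{5}$; alternatively, the spin node $\alpha_5$ itself governs a $GL_5$-type Levi $M_5$, and $\fun{5}$ is precisely the leading exponent of $\Ind_{M_5}^{H}(\Omega_{M_5,0})$. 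The key input is that this degenerate principal series is \emph{irreducible} at the relevant point (this is the $s=0$, trivial character case for $M_5$ in type $D_5$, which is unitary, hence semisimple, and one checks $\mult{\lambda_0}{\jac{H}{T}{\cdot}}=1$, forcing irreducibility).

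Granting irreducibility, I would run the argument used for \Cref{A_3::example} and its $A_n$ generalization. Namely, let $\Pi=\Ind_{M_5}^{H}(\Omega_{M_5,0})$ with leading exponent $\lambda=\fun{5}$; by \Cref{Lemma::geomtric_lema}, $\jac{H}{T}{\Pi}=\sum_{w\in W^{M_5,T}}\coset{w\lambda}$, and I would compute the multiplicity of each exponent $w\lambda$ appearing in the statement by counting $\{w'\in W^{M_5,T}: w'\lambda = w\lambda\}$, which equals $|\Stab_{\weyl{H}}(w\lambda)\cap(\text{coset representatives})|$; equivalently $\mult{w\lambda}{\jac{H}{T}{\Pi}}=|W^{M_5,T}\cap \Stab_{\weyl{H}}(\lambda)w^{-1}|$. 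The ten coefficients $120,96,72,48,48,32,24,16,16,8$ are then read off as $\frac{|W^{M_5,T}|}{\text{(orbit size pieces)}}$; since $|W^{M_5,T}|$ for $D_5$ with $M_5$ of type $A_4$ is $|\weyl{D_5}|/|\weyl{A_4}| = 1920/120 = 16$... so I would instead phrase it through the $A_4$-rule structure inside the Levi: decompose via an intermediate Levi $M=M_{\set{\alpha_1,\dots,\alpha_4}}\cong GL_5$-type and track how the irreducible constituents $\sigma^i$ of the $GL_5$-principal series (whose Jacquet modules carry the coefficients $(n-l(w))\cdot(n-1)!$ from the $A_n$-rule) get induced up. Since $\pi$ is any irreducible subquotient with $\lambda\leq\jac{H}{T}{\pi}$, and $\Pi$ is irreducible, $\pi=\Pi$, so the rule holds for $\pi$.

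The main obstacle, and where I would spend the real work, is the bookkeeping of the ten Weyl-group elements and verifying that the listed $w\lambda$ are pairwise distinct and exhaust the support of $\jac{H}{T}{\Pi}$ with exactly the stated multiplicities. Concretely I would: (i) confirm $\Stab_{\weyl{H}}(\fun{5})$ is the parabolic-type subgroup generated by $\set{s_{\alpha_1},s_{\alpha_2},s_{\alpha_3},s_{\alpha_4}}$ of type $A_4$, of order $120$; (ii) for each of the ten words $w$ listed, compute $\inner{w\lambda,\check{\alpha}}$ for all $\alpha\in\Delta_H$ to pin down $\Stab_{\weyl{H}}(w\lambda)$ and hence its orbit size, and (iii) check the coefficients sum correctly against $\dim\jac{H}{T}{\Pi}=|\weyl{D_5}|\cdot\mult{\lambda_0}{\jac{H}{T}{\Pi}}/|\Stab|$-type identities — i.e. $\sum_w (\text{coeff}_w)\cdot|\weyl{H}\cdot w\lambda| $ should equal $|\weyl{H}|$ if $\Pi=\Ind_{M_5}^{H}$ exhausts the full $\weyl{H}$-orbit, which it does since $M_5$ is maximal. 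This last numerical consistency check ($120\cdot|\weyl{H}\cdot\lambda|/120 + \dots$) is the sanity check that the rule as stated is complete. Once the ten orbit-size computations are done, the rule follows formally from irreducibility of $\Pi$ together with \Cref{Lemma::geomtric_lema}, exactly as in the $A_3$ and $A_n$ cases.
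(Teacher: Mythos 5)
Your central move---identifying the abstract irreducible $\pi$ with an irreducible degenerate principal series $\Pi=\Ind_{M_5}^{H}(\Omega_{M_5,0})$ (or one induced from $M_1$) and then reading the rule off $\jac{H}{T}{\Pi}$ via \Cref{Lemma::geomtric_lema}---does not work, because $\fun{5}$ is not an exponent of any degenerate principal series of $H$ induced from a one-dimensional character of a maximal Levi. In the standard coordinates for $D_5$ the orbit $\weyl{H}\cdot\fun{5}$ consists of the $16$ weights $\tfrac{1}{2}(\pm1,\dots,\pm1)$ with an even number of signs, whereas a leading exponent $\jac{M_i}{T}{\Omega}$ pairs to $-1$ with every coroot of $\Delta_{M_i}$; a short check over $i=1,\dots,5$ shows no element of the orbit of $\fun{5}$ has this property, so the two orbits never meet. (Two further symptoms of the same problem: the leading exponent of $\Ind_{M_5}^{H}(\Omega_{M_5,0})$ is $-\rho_{M_5}$, not $\fun{5}$; and, as you yourself noticed mid-argument, $\dim\jac{H}{T}{\Ind_{M_5}^{H}\Omega}=|W^{M_5,T}|=16$, which cannot carry a coefficient of $120$---the right-hand side of the rule has total dimension $480$.) Consequently the step ``$\pi$ is irreducible with $\lambda\leq\jac{H}{T}{\pi}$ and $\Pi$ is irreducible, hence $\pi=\Pi$'' has no valid instantiation, and the sanity check $\sum_w(\text{coeff}_w)\cdot|\weyl{H}\cdot w\lambda|=|\weyl{H}|$ you propose is also false: only $10$ of the $16$ orbit elements occur in the rule, which is a lower bound on $\jac{H}{T}{\pi}$, not an exact evaluation.

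The paper's proof takes a genuinely different route that never identifies $\pi$ with an induced representation. It bootstraps lower bounds on $\jac{H}{T}{\pi}$ from previously established rules: \eqref{Eq:OR} gives $120\times\coset{\lambda}$ since $\Stab_{\weyl{H}}(\fun{5})$ is the $A_4$ generated by $\s{\alpha_1},\dots,\s{\alpha_4}$; the rule \eqref{Eq:: genan} applied to the $A_4$-Levi $M_{\set{\alpha_1,\alpha_2,\alpha_3,\alpha_5}}$ forces $\mult{\sigma^1_{A_4}}{\jac{H}{M}{\pi}}=5$ and hence the string $120,90,60,30,\dots$; then \eqref{Eq:OR} again, via the divisibility $12\divides\mult{\s{\alpha_5}\lambda}{\jac{H}{T}{\pi}}$, forces an extra copy of $\sigma^2_{A_4}$ and lifts $90$ to $96$; comparing with the $A_3$-Levi $M_{\set{\alpha_3,\alpha_4,\alpha_5}}$ ($20$ copies of $\sigma^1_{A_3}$ account for only $80$ of those $96$) produces the $48\times\coset{\s{\alpha_4}\s{\alpha_3}\s{\alpha_5}\lambda}$ term, and two further applications of \eqref{Eq:: genan} and \eqref{Eq::A3b} yield the remaining coefficients. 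If you want to salvage your approach you would have to work with the unique irreducible constituent of the full principal series $\Ind_{T}^{H}(\fun{5})$ containing $\fun{5}$ as an exponent, but computing its Jacquet module directly is precisely the hard problem the paper's iterative argument is designed to avoid.
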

\begin{proof}
$ $
\begin{itemize}
\item
By \eqref{Eq:OR}, one has $120\times \lambda \leq \jac{G}{T}{\pi}$.
\item
Applying \eqref{Eq:: genan}, with respect to  $M_{4}=M_{\set{\alpha_1,\alpha_2,\alpha_3,\alpha_5}}$ yields 
$\mult{\sigma_{A_4}^{1}}{\jac{H}{M_4}{\pi}}=5$. In particular, one has 
\[120\times \coset{\lambda} +
 90 \times \coset{\s{\alpha_5}\lambda} +
 60 \times  \coset{\s{\alpha_3}\s{\alpha_5}\lambda} +
 30 \times \coset{\s{\alpha_2}\s{\alpha_3}\s{\alpha_5}\lambda} \leq \jac{H}{T}{\pi}.\]
\item
By \eqref{Eq:OR}, one has
\[12 \divides \mult{\s{\alpha_5}\lambda}{\jac{H}{T}{\pi}}.\]
Thus, $\mult{\s{\alpha_5}\lambda}{\jac{H}{T}{\pi}}\geq 96$. In particular, $\mult{\sigma_{A_4}^{2}}{\jac{H}{M_4}{\pi}}\geq 1$.
\item
We conclude that, 
\[\jac{H}{M_4}{\pi} \geq 5 \times \sigma_{A_4}^{1} + \sigma_{A_4}^{2}.\]
Thus, 
\[\begin{split}
\jac{H}{T}{\pi} \geq & 
120\times \coset{\lambda} +
 96 \times \coset{\s{\alpha_5}\lambda}  
 +72 \times  \coset{\s{\alpha_3}\s{\alpha_5}\lambda} 
 \\&+48 \times \coset{\s{\alpha_2}\s{\alpha_3}\s{\alpha_5}\lambda}
 +
 24 \times \coset{\s{\alpha_1}\s{\alpha_2}\s{\alpha_3}\s{\alpha_5}\lambda}.
\end{split}\]

\item
On the other hand, applying \eqref{Eq:: genan} on $\lambda$ with respect to $M_{1,2}= M_{\set{\alpha_3,\alpha_4,\alpha_5}}$ yields that 
$\mult{\sigma_{A_3}^{1}}{\jac{G}{M_{1,2}}{\pi}}=20$. 
\item
Since $\mult{\s{\alpha_5}\lambda}{\jac{H}{T}{\pi}}\geq 96$ and $20\times \sigma_{A_3}^{1}$ contributes only $80$ copies of $\s{\alpha_5}\lambda$, we deduce that 
\[\jac{H}{M_{1,2}}{\pi} \geq 20 \times \sigma_{A_3}^{1} + 8 \times  \sigma_{A_3}^{2}.\]
In particular, one has $48 \times \coset{\s{\alpha_4}\s{\alpha_3}\s{\alpha_5}\lambda} \leq \jac{H}{T}{\pi}$.
\item
Applying \eqref{Eq:: genan} on $\s{\alpha_2}\s{\alpha_3}\s{\alpha_5}\lambda$ yields that 
\[32\times \coset{\s{\alpha_4}\s{\alpha_2}\s{\alpha_3}\s{\alpha_5}\lambda} 
+
16\times \coset{\s{\alpha_3}\s{\alpha_4}\s{\alpha_2}\s{\alpha_3}\s{\alpha_5}\lambda} 
\leq \jac{H}{T}{\pi}.\]
\item
By Applying \eqref{Eq::A3b} on $16\times \coset{\s{\alpha_3}\s{\alpha_4}\s{\alpha_2}\s{\alpha_3}\s{\alpha_5}\lambda}$ with respect to $M_{\alpha_1,\alpha_2,\alpha_3}$ 
one has
\[16\times \coset{\s{\alpha_1}\s{\alpha_4}\s{\alpha_2}\s{\alpha_3}\s{\alpha_5}\lambda}
+
8\times \coset{\s{\alpha_1}\s{\alpha_3}\s{\alpha_4}\s{\alpha_2}\s{\alpha_3}\s{\alpha_5}\lambda} \leq \jac{H}{T}{\pi}.\]
\end{itemize}
This completes the proof.
\end{proof}
\begin{Lem}\label{Lemma ::result D6}
Let $H$ be a group of type $D_6$.
There exists a unique irreducible subquotient $\sigma_1$ of $H$  having $\mult{\lambda_{a.d}}{\jac{H}{T}{\sigma_1}}\neq 0$, where 
$\lambda_{a.d} = \dsixcharchar{-1}{0}{0}{-1}{0}{0}$. In that case, one has 
$\mult{\lambda_{a.d}}{\jac{G}{T}{\sigma_1}}=24$ and $\mult{\lambda_0}{\jac{H}{T}{\sigma_1}} =1$, where 
\[\lambda_0= \s{\alpha_3}\s{\alpha_2}\s{\alpha_1}\s{\alpha_4}
\s{\alpha_3}\s{\alpha_5} \s{\alpha_6}\s{\alpha_4}\cdot \lambda_{a.d}.\]
 
\end{Lem}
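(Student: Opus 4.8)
The strategy is to exploit the combinatorial machinery developed in the appendix: I want to identify one natural irreducible degenerate principal series of the $D_6$ group $H$ whose Jacquet module carries the anti-dominant exponent $\lambda_{a.d}= \dsixcharchar{-1}{0}{0}{-1}{0}{0}$, then use the orthogonality rule and the $A_n$-type branching rules to track the multiplicity of $\lambda_{a.d}$ and propagate it along the Weyl orbit down to $\lambda_0$. First I would observe that $\lambda_{a.d}$ is anti-dominant with $\inner{\lambda_{a.d},\check\alpha_i}=0$ precisely for $i\in\{2,3,5,6\}$, so by \eqref{Eq:OR} any subquotient $\sigma$ with $\lambda_{a.d}\leq\jac{H}{T}{\sigma}$ has $\mult{\lambda_{a.d}}{\jac{H}{T}{\sigma}}$ divisible by $\Card{W_{M_{\set{\alpha_2,\alpha_3,\alpha_5,\alpha_6}}}}$; since the stabilizer $\Stab_{\weyl H}(\lambda_{a.d})$ is exactly this parabolic Weyl subgroup (of order $24$, being of type $A_1\times A_2\times A_1$ — here I'd double-check that $\alpha_3,\alpha_5,\alpha_6$ span $A_2\times A_1$ inside $D_6$, taking into account that $\alpha_5$ is the branch node), \Cref{Lemma::geomtric_lema} forces $\mult{\lambda_{a.d}}{\jac{H}{T}{\pi}}=24$ for the relevant $\pi$, and hence there is a \emph{unique} such irreducible subquotient $\sigma_1$, occurring with multiplicity one, exactly as in \Cref{Lemma::unique anti}.

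Next I would run the branching-rule descent. Starting from $24\times\coset{\lambda_{a.d}}\leq\jac{H}{T}{\sigma_1}$, I apply the rule \eqref{Eq:: genan} for the $A_n$-type Levi subgroups obtained by deleting appropriate nodes (mimicking the $D_5$ lemma proved just above, and the $A_3$-type rule \eqref{Eq::A3b} for the branch part), following the reduced word $\s{\alpha_3}\s{\alpha_2}\s{\alpha_1}\s{\alpha_4}\s{\alpha_3}\s{\alpha_5}\s{\alpha_6}\s{\alpha_4}$ that carries $\lambda_{a.d}$ to $\lambda_0$. At each step the coefficient of the newly reached exponent drops according to the $(n-l(w))\cdot(n-1)!$ formula, and I need to check that the arithmetic bookkeeping terminates with coefficient $1$ at $\lambda_0$. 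Here the key sanity check is that $\lambda_0$ is a dominant (or at least extremal) exponent with trivial stabilizer, so that $\mult{\lambda_0}{\jac{H}{T}{\sigma_1}}=1$ is consistent with \Cref{Lemma::geomtric_lema}; a central-character argument (cf.\ \cite[Lemma.~3.12]{E6}) then pins down that this forces the embedding picture used later in \Cref{Prop::E7::iwahori}.

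The main obstacle I anticipate is the bookkeeping in the descent: unlike the clean $D_5$ computation above, the word here has length $8$ and passes through the branch node of $D_6$ twice (the two occurrences of $\s{\alpha_4}$ and the interplay of $\s{\alpha_5},\s{\alpha_6}$), so I must be careful that at each stage the exponent I land on still satisfies the hypothesis $\inner{\mu,\check\alpha}=\pm1$ needed to apply an $A_n$ rule, and that I am not over- or under-counting multiplicities when several rules could apply to the same exponent. In particular I need a genuine irreducible degenerate principal series of $H$ — most naturally a spin-type $\Ind_{M}^{H}(\Omega)$ on a maximal Levi — from which to launch the argument, and verifying its irreducibility (either from the known $D_n$ data cited in the introduction, e.g.\ \cite{MR2017065,MR1346929}, or via a branching-rule argument of its own) is the real input; once that is in hand, the descent is mechanical. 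I would therefore present the proof as: (i) existence/uniqueness of $\sigma_1$ via \eqref{Eq:OR} and \Cref{Lemma::unique anti}; (ii) the explicit branching-rule chain establishing $\mult{\lambda_0}{\jac{H}{T}{\sigma_1}}=1$, displayed in a table in the same format as \Cref{tab::Prop::E7P5m0O1::3}.
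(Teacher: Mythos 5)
Your first step---existence and uniqueness of $\sigma_1$ together with $\mult{\lambda_{a.d}}{\jac{H}{T}{\sigma_1}}=24$ via the orthogonality rule \eqref{Eq:OR}, using that $\Stab_{\weyl{H}}(\lambda_{a.d})$ is the parabolic Weyl subgroup of type $A_2\times A_1\times A_1$ generated by $\s{\alpha_2},\s{\alpha_3},\s{\alpha_5},\s{\alpha_6}$ (order $24$)---is essentially the paper's argument; it is exactly \Cref{Lemma::unique anti}. The second step, however, has a genuine gap. A branching-rule descent only ever produces \emph{lower} bounds on multiplicities, so at best it yields $\mult{\lambda_0}{\jac{H}{T}{\sigma_1}}\geq 1$; nothing in your plan supplies the matching upper bound. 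Your proposed sanity check does not hold: $\lambda_0$ does \emph{not} have trivial stabilizer (its stabilizer is conjugate to $\Stab_{\weyl{H}}(\lambda_{a.d})$, hence of order $24$), and \Cref{Lemma::geomtric_lema} gives $\mult{\lambda_0}{\jac{H}{T}{\Ind_{M_3}^{H}(\Omega_0)}}=2$, so ``consistency with the geometric lemma'' is compatible with the answer being $1$ or $2$ and forces nothing.

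The missing input is the one you were searching for, but it is not an \emph{irreducible} degenerate principal series: the paper takes $\pi=\Ind_{M_3}^{H}(\Omega_{0})$, whose leading exponent is precisely $\lambda_0=\jac{M_3}{T}{\Omega_0}$, and invokes \cite[Theorem 5.3]{MR2017065} to the effect that $\pi$ is \emph{semi-simple of length two}, $\pi=\sigma_1\oplus\sigma_2$. Semi-simplicity makes both constituents subrepresentations, so by \ref{Prop::unique::1} each satisfies $\mult{\lambda_0}{\jac{H}{T}{\sigma_i}}\geq 1$; since the total is $2$, each multiplicity is exactly $1$. In particular no branching-rule chain along the length-$8$ Weyl word is needed, and such a chain alone could not close the argument; without the semi-simplicity (or some other device locating $\lambda_0$ in the Jacquet module of the \emph{other} constituent $\sigma_2$), the value $\mult{\lambda_0}{\jac{H}{T}{\sigma_1}}=2$ is not excluded.
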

\begin{proof}
Let $\pi =\Ind_{M_3}^{G}(\Omega_{0})$. By \cite[Theorem 5.3]{MR2017065}, $\pi$  is semi-simple of length two. We write $\pi =\sigma_1\oplus \sigma_2$. Applying \Cref{Lemma::unique anti}, there exists a unique irreducible subrepresentation $\tau$ of $\pi$ having $\mult{\lambda_{a.d}}{\jac{G}{T}{\tau}}\neq 0$, say $\tau =\sigma_1$ and $\mult{\lambda_{a.d}}{\jac{H}{T}{\sigma_1}}= 24$. 

 Note that $\lambda_0=\jac{M_{3}}{T}{\Omega_{0}}$.  By \ref{Prop::unique::1} , $\mult{\lambda_0}{\jac{G}{T}{\sigma_i}}\neq 0$, while, by \Cref{Lemma::geomtric_lema}, one has $\mult{\lambda_0}{\jac{G}{T}{\pi}}=2$. Thus the claim follows. 
\end{proof}

\begin{Lem}\label{D5:reseults}
Let $H$ be a group of type $D_5$ and let  $\pi = \Ind_{M_3}^{H}(\Omega_0)$. Then,
$\pi$ is irreducible and $\M_{u}\res{\pi}$,
where $u = \s{\alpha_3}\s{\alpha_5}\s{\alpha_4}\s{\alpha_2}\s{\alpha_3}$,
 is injective. 

\end{Lem}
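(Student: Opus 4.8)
The claim has two parts: first that $\pi=\Ind_{M_3}^{H}(\Omega_0)$ is irreducible for $H$ of type $D_5$, and second that the standard intertwining operator $\M_u(\lambda_0)\res{\pi}$, with $u=\s{\alpha_3}\s{\alpha_5}\s{\alpha_4}\s{\alpha_2}\s{\alpha_3}$ and $\lambda_0=\jac{M_3}{T}{\Omega_0}$, is injective. For the first part I would argue exactly as in the related statements of the appendix: the character $\Omega_0$ is trivial on the derived group, so $\pi$ is unitarily induced from a unitary character, hence unitary and therefore semisimple; thus it suffices to show $\pi$ admits a \emph{unique} irreducible subrepresentation, which by \ref{Prop::unique::1} and \ref{Prop::unique::2} follows once we check that $\mult{\lambda_0}{\jac{H}{T}{\pi}}$ equals the multiplicity of $\lambda_0$ in a single irreducible constituent. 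Here $\lambda_0$ is the leading exponent, and by \Cref{Lemma::geomtric_lema} combined with a direct count of $\Stab_{\weyl{H}}(\lambda_0)$ one computes $\mult{\lambda_0}{\jac{H}{T}{\pi}}$; I expect this to come out to $1$ (in the $D_5$ case the relevant stabilizer is trivial), which immediately forces irreducibility. If instead the multiplicity is larger, one falls back on the branching-rule machinery of \Cref{subsection :: nonreg}: starting from the anti-dominant exponent $\lambda_{a.d}\leq\jac{H}{T}{\sigma}$ for an irreducible constituent $\sigma$, build a $\sigma$-dominated sequence reaching $f_\pi$, using the $A_n$-rules recorded earlier.

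For the injectivity of $\M_u(\lambda_0)\res{\pi}$, the key observation is that $u$ should be (up to the relevant reduced-word bookkeeping) a product of simple reflections $\s{\alpha}$ for which $\inner{\lambda,\check\alpha}\neq 1$ at each successive stage along the word, i.e. none of the rank-one operators $\M_{\s\alpha}$ appearing in the factorization $\M_u=\M_{\s{\alpha_3}}\circ\cdots\circ\M_{\s{\alpha_3}}$ (via \ref{properties::intertwining::2}) has a kernel by \ref{properties::intertwining::3}. More precisely I would compute the root set $R(u)$ and verify that $\inner{\lambda_0,\check\gamma}\neq 1$ for every $\gamma\in R(u)$, so that the normalized operator $N_u(\lambda_0)$ is holomorphic and, by the Gindikin–Karpelevich computation, an isomorphism on the full principal series $\Ind_T^H(\lambda_0)$; its restriction to the subrepresentation $\pi$ is then automatically injective. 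Since $\pi$ is irreducible by the first part, $\M_u(\lambda_0)\res{\pi}$ is either zero or injective, and the non-vanishing of $N_u(\lambda_0)$ (again Gindikin–Karpelevich, as invoked in \Cref{Prop::E7::iwahori}) rules out the zero case.

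The main obstacle I anticipate is the bookkeeping for the second part: one must check that $u$, written in the given non-reduced-looking form, actually has length $5$ and that $R(u)$ avoids the hyperplane $\inner{\cdot,\check\gamma}=1$ at $\lambda_0$ — this requires knowing $\lambda_0$ explicitly in the $D_5$ root coordinates and pairing it against the five positive roots sent negative by $u$. If some $\gamma\in R(u)$ does satisfy $\inner{\lambda_0,\check\gamma}=1$, the naive argument fails and one must instead restrict to $\pi$ before applying that rank-one operator and use that $\pi$, being irreducible, is not contained in the kernel (which is a proper subrepresentation of the ambient principal series). In practice this is the standard "intertwining operator on a fixed irreducible subrepresentation" trick already used in the proof of \Cref{prop::lang_proof} and \Cref{Prop::E7::iwahori}, so I would phrase the $D_5$ argument to parallel those: identify the anti-dominant $\lambda_{a.d}$, note $\pi\hookrightarrow\Ind_T^H(\lambda_0)$, factor $\M_u(\lambda_0)$ through the intermediate principal series, and track injectivity step by step, invoking \ref{properties::intertwining::3} at each simple reflection. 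The remaining details are routine root-system computations best delegated to the computer, as elsewhere in the paper.
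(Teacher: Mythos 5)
Your proposal is essentially workable, but the two halves relate to the paper's actual proof quite differently. For the irreducibility, the paper does none of what you describe: it simply cites Jantzen's classification of degenerate principal series for orthogonal groups (\cite[Theorem 5.3]{MR2017065}). Your self-contained alternative does go through: $\Omega_0$ is unitary, so $\pi$ is semisimple, and with $\lambda_0=\jac{M_3}{T}{\Omega_0}=-\rho_{M_3}=-\epsilon_1+\epsilon_3-\epsilon_4$ one checks that the only $w\in W^{M_3,T}$ with $w\lambda_0=\lambda_0$ is $w=e$, so $\mult{\lambda_0}{\jac{H}{T}{\pi}}=1$ and, since every direct summand must contribute to this multiplicity by Frobenius reciprocity, $\pi$ is irreducible. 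One caution about your parenthetical justification: $\Stab_{\weyl{H}}(\lambda_0)$ is \emph{not} trivial (for instance the reflection in $\epsilon_1+\epsilon_3$ fixes $\lambda_0$, and the case is non-regular); what is trivial is $\Stab_{\weyl{H}}(\lambda_0)\cap W^{M_3,T}$, which is the set the geometric lemma actually counts. So your conclusion is right but your stated reason is not.

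For the injectivity, your primary route fails concretely. The word $u=\s{\alpha_3}\s{\alpha_5}\s{\alpha_4}\s{\alpha_2}\s{\alpha_3}$ is reduced of length $5$ with
\[
R(u)=\set{\epsilon_2-\epsilon_4,\ \epsilon_3-\epsilon_4,\ \epsilon_3-\epsilon_5,\ \epsilon_2+\epsilon_3,\ \epsilon_3+\epsilon_5},
\]
and four of these five roots satisfy $\inner{\lambda_0,\check{\gamma}}=1$ (the remaining one gives $2$). Hence $\M_{u}(\lambda_0)$ has a large kernel on the full principal series $\Ind_{T}^{H}(\lambda_0)$ and is nowhere near an isomorphism there; the rank-one factors do have kernels. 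Your fallback, however, is exactly the paper's argument: $u\lambda_0$ is the anti-dominant exponent, so $\M_u(\lambda_0)$ maps $\pi$ into $\Ind_T^H(\lambda_{a.d})$; since $\pi$ is irreducible, $\M_u(\lambda_0)\res{\pi}$ is either zero or injective; and it is nonzero because the spherical vector lies in $\pi$ and the Gindikin--Karpelevich factor $\prod_{\gamma\in R(u)}\zeta(\inner{\lambda_0,\check{\gamma}})/\zeta(\inner{\lambda_0,\check{\gamma}}+1)$ is finite and nonzero, all pairings being $1$ or $2$. So commit to the fallback as the proof of the second half; the "check that no root of $R(u)$ pairs to $1$" route should be deleted rather than presented as the main line.
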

\begin{proof}
By \cite[Theorem 5.3]{MR2017065}, $\pi$ is irreducible. Let $\lambda_0 =\jac{M_3}{T}{\Omega_0}$ be the leading exponent of $\pi$ and let $\lambda_{a.d}$ be the anti-dominant exponent of $\pi$. Note that $u\lambda_0 =\lambda_{a.d}$. Thus, it  induces a map between 
$\Ind_{M_3}^{H}(\Omega_0)$ and $\Ind_{T}^{H}(\lambda_{a.d})$.
Since $\pi$ is irreducible, and $\M_{u}\res{\pi}\neq 0$ the map is an isomorphism.
\end{proof}

\subsubsection{ Rules Coming From Levi Subgroups Of Type $E_6$}
Let $H$ be a group of type $E_6$.
We fix the following labeling of the Dynkin diagram of group of type $E_6$. 

\[\begin{tikzpicture}[scale=0.5]
\draw (-1,0) node[anchor=east]{};
\draw (0 cm,0) -- (8 cm,0);
\draw (4 cm, 0 cm) -- +(0,2 cm);
\draw[fill=black] (0 cm, 0 cm) circle (.25cm) node[below=4pt]{$\alpha_1$};
\draw[fill=black] (2 cm, 0 cm) circle (.25cm) node[below=4pt]{$\alpha_3$};
\draw[fill=black] (4 cm, 0 cm) circle (.25cm) node[below=4pt]{$\alpha_4$};
\draw[fill=black] (6 cm, 0 cm) circle (.25cm) node[below=4pt]{$\alpha_5$};
\draw[fill=black] (8 cm, 0 cm) circle (.25cm) node[below=4pt]{$\alpha_6$};
\draw[fill=black] (4 cm, 2 cm) circle (.25cm) node[right=3pt]{$\alpha_2$};
\end{tikzpicture}\]
\begin{Lem}\label{E6:reseults}

\begin{enumerate}
	\item The representation $\pi =\Ind_{ M_5}^{H}(\Omega_{-\frac{1}{2}})$ is irreducible. \item $\M_{u}\res{\pi}$, where $u=\s{\alpha_4}.
\s{\alpha_5}\s{\alpha_6}\s{\alpha_3}\s{\alpha_2}\s{\alpha_4}\s{\alpha_5}$, is injective.
\end{enumerate}
\end{Lem}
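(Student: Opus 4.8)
The plan is to treat the two assertions separately: the first is essentially a citation, and the second is an intertwining-operator argument parallel to the proof of \Cref{D5:reseults}.

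\emph{Irreducibility of $\pi=\Ind_{M_5}^{H}(\Omega_{-\frac{1}{2}})$.} I would invoke the classification of the reducibility points of the degenerate principal series of the split, simply-connected group of type $E_6$ obtained in \cite{E6}: the datum $(M_5,\ s=-\tfrac12,\ \chi=\mathrm{triv})$ does not occur among the reducible cases listed there, so $\pi$ is irreducible. This is the analogue of the appeal to \cite[Theorem 5.3]{MR2017065} in \Cref{D5:reseults}.

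\emph{Injectivity of $\M_{u}(\lambda_0)\res{\pi}$.} Let $\lambda_0=\jac{M_5}{T}{\Omega_{-\frac{1}{2}}}$ be the leading exponent of $\pi$. Since $\lambda_0\leq\jac{H}{T}{\pi}$, Frobenius reciprocity gives an embedding $\pi\hookrightarrow\Ind_{T}^{H}(\lambda_0)$, and $\M_{u}(\lambda_0)$ restricts to an intertwining map $\pi\to\Ind_{T}^{H}(u\cdot\lambda_0)$. As $\pi$ is irreducible, the kernel of $\M_{u}(\lambda_0)\res{\pi}$ is either $0$ or all of $\pi$, so it suffices to show this restriction is holomorphic at $\lambda_0$ and nonzero. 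For holomorphy, I would first verify that $u=\s{\alpha_4}\s{\alpha_5}\s{\alpha_6}\s{\alpha_3}\s{\alpha_2}\s{\alpha_4}\s{\alpha_5}$ is reduced and carries $\lambda_0$ to the anti-dominant exponent $\lambda_{a.d}$, and that $\inner{\lambda_0,\check{\gamma}}>0$ for every $\gamma\in R(u)$; then, writing $N_{u}(\lambda_0)$ as the composition of its rank-one normalized factors along this reduced word (using $N_{w_1w_2}(\lambda)=N_{w_1}(w_2\cdot\lambda)\circ N_{w_2}(\lambda)$), each factor is holomorphic at the relevant point by \cite[Section 6]{MR517138}, so $N_{u}(\lambda_0)$ is holomorphic; moreover the scalar $\prod_{\gamma\in R(u)}\zeta(\inner{\lambda_0,\check{\gamma}})/\zeta(\inner{\lambda_0,\check{\gamma}}+1)$ relating $\M_{u}$ and $N_{u}$ is finite and nonzero since each $\inner{\lambda_0,\check{\gamma}}$ is a positive number, so $\M_{u}(\lambda_0)$ is holomorphic too. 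For nonvanishing, I would use that $\Omega_{-\frac{1}{2}}$ is an unramified character of $M_5$, so $\pi$ contains the normalized spherical vector, whose image under $\pi\hookrightarrow\Ind_{T}^{H}(\lambda_0)$ is the spherical vector of the principal series; by the Gindikin--Karpelevich formula (see \cite[Chapter 4]{MR0419366}), $\M_{u}(\lambda_0)$ sends it to a nonzero multiple of the spherical vector of $\Ind_{T}^{H}(u\cdot\lambda_0)$. Hence $\M_{u}(\lambda_0)\res{\pi}\neq 0$, and therefore injective.

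The main obstacle is purely computational rather than conceptual: to carry this out one needs $\lambda_0$ and the $E_6$ root system explicitly in hand, in order to confirm that $u$ is reduced, to compute $R(u)$, and to verify $\inner{\lambda_0,\check{\gamma}}>0$ on $R(u)$. The only delicate point is that $u$ must be chosen of minimal length among the Weyl elements taking $\lambda_0$ into the anti-dominant chamber — otherwise some pairing $\inner{\lambda_0,\check{\gamma}}$ with $\gamma\in R(u)$ could vanish, the normalizing scalar would blow up, and holomorphy of $\M_{u}(\lambda_0)$ would need a separate Laurent-coefficient argument. With the specific $u$ in the statement this is a routine finite verification of the kind automated elsewhere in the paper, and the sole external input is the irreducibility of $\pi$ supplied by \cite{E6}.
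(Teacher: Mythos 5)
Your proposal is correct and follows essentially the same route as the paper: the paper's proof of this lemma simply cites \cite{E6} for irreducibility and then argues exactly as in \Cref{D5:reseults}, namely that $u$ carries $\lambda_0$ to the anti-dominant exponent and that irreducibility of $\pi$ together with $\M_{u}(\lambda_0)\res{\pi}\neq 0$ forces injectivity. Your additional care about holomorphy along a reduced word and the Gindikin--Karpelevich nonvanishing is exactly the justification the paper leaves implicit (and invokes explicitly elsewhere, e.g.\ in the proof of \Cref{Prop::E7::iwahori}).
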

\begin{proof}
The first part follows from \cite{E6}. For the second part, we argue as in \Cref{D5:reseults}. 
\end{proof}
\section{The Iwahori-Hecke Algebra and The Unramified Principal Series} \label{section:: iwahori hecke algebra}
 
In this section we recall the theory of finitely-generated modules over the Iwahori-Hecke algebra of $G$ and their relation to the unramified principal series of $G$ and use it to complete the proof of \Cref{Prop::E7::iwahori}.
For more information on the structure of the Iwahori-Hecke algebra and its modules see \cite{MR2250034}  and \cite{MR2642451}.

\subsection{Notations}

As before, $F$ is a non-Archimedean  local field. Let $\mathcal{O}$ denote its ring of integers and $\mathcal{P}$ be the maximal ideal of $\mathcal{O}$. Let $q =|\mathcal{O}\slash \mathcal{P}|$ and $\F_{q}\simeq \mathcal{O}\slash \mathcal{P}$, the field of $q$ elements. Let $\G$ be a split, semi-simple, simply-connected group scheme  such that $G=\G(F)$ and assume that $\mathbb{G}$ is defined over $\mathcal{O}$.
Let $\mathbb{B},\mathbb{T}$ be a Borel subgroup and a maximal split torus such that $\mathbb{B}(F)= \para{B}$ and $\mathbb{T}(F)= \para{T}$. 
We fix a hyper-special maximal compact subgroup $K =\G(\mathcal{O})$ of $G$ and let 
\[\Psi\: : \: K \: \rightarrow \G(\F_{q})\]
denote the projection modulo $\mathcal{P}$.

We note that $\bfX^{un}(T)$ is the group of all characters $\lambda\in\bfX(T)$ such that $\lambda\res{T\cap K}$ is trivial.
It is possible to extend the usual pairing between rational characters and co-characters to $\bfX^{un}(T)$ (see \Cref{subsec :: intertwining_operators}) by
\[
\lambda(\check{\alpha}(\unif)) = q^{\inner{\lambda,\check{\alpha}}} ,
\]
where $\unif$ is a generator of $\mathcal{P}$.
 
Let $J = \Psi^{-1}(\mathbb{B}(\F_{q}))$ be an Iwahori subgroup of $G$. The subgroup $J$ plays an important role in the study of  unramified principal series representations of $G$. By \cite[Proposition 2.7]{MR571057}, 
if $\lambda$ is unramified, then $\Pi=\Ind_T^G(\lambda)$ is generated by its Iwahori fixed vectors and so are all of its subquotients. 

We continue by recalling the definition of the Iwahori-Hecke algebra $\Hecke$. The algebra $\Hecke$ consists of all compactly supported $J$ -bi-invariant complex functions on $G(F)$, namely,
\[\Hecke = \set{ f \in C_{c}(G(F)) \: : \: f(j_1 g j_2) =f(g)  \quad \forall j_1,j_2 \in J, \quad g \in G(F)}.\]
The multiplication in $\Hecke$ is given by convolution and the measure of $J$ is set to be $1$. By \cite[Section 3]{Casselman} and \cite{Borel1976},
there is an equivalence of
categories between
the category of
admissible representations of $G$ which are generated by their $J$-fixed vectors
and the category of finitely generated $\Hecke$-modules.
This equivalence of categories sends an admissible representation $\pi$ of $G$ to the $\Hecke$-module $\pi^J$ of $J$-fixed vectors in $\pi$.
Thus, in order to study the structure of $\Ind_{T}^{G}(\lambda)$ it is sufficient to study the  corresponding finite dimensional $\Hecke$-module.  

\subsection{The Bernstein Presentation And Unramified Principal Series}
The Iwahori-Hecke algebra, $\Hecke$, can be described in terms of generators and relations. One such presentation is known as the Bernstein's presentation.
$\Hecke$ is generated by a set of generators $\set{T_{\s{\alpha}},\theta_{\check{\alpha}} \: : \: \alpha \in \Delta_{G}}$ subject to certain relations listed in \cite[Section 3]{MR2250034}. The algebra $\Hecke$ admits two important subalgebras: 
\begin{itemize}
\item
A finite dimensional algebra $\Hecke_{0} = \gen{ T_{\s{\alpha}} \: \: : \: \alpha \in \Delta_{G}} =\Span_{\C} \set{T_{\w} \: : \: \w \in \weyl{G}}$  of dimension $|\weyl{G}|$. 
\item
An infinite dimensional commutative algebra 
\[\Theta = \gen{\theta_{\check{\alpha}} \: :\:  \alpha \in \Delta_{G}} = \Span_{\C}\set{\theta_{x} \: : \: x \in \Z[\check{\Delta_{G}}] },\]
where $\Z[\check{\Delta_{G}}]$ is the co-root lattice of $T$.
\end{itemize}
In particular, as vector spaces,
\[
\Hecke = \Hecke_{0} \otimes \Theta .
\]

Given an unramified principal series $\Pi = \Ind_{T}^{G}(\lambda)$, we describe the left $\Hecke$-module, $\Pi^J=\Hecke(\lambda)$, corresponding to it by the equivalence of categories of \cite{Borel1976} using the Bernstein presentation.
This module is given by the left $\Hecke$-action on $\Hecke(\lambda)=\Hecke\otimes_\Theta \C_\lambda$, where $\C_\lambda$ is the one-dimensional representation of $\Theta$, given by $\lambda$.
In other words, $\Hecke(\lambda)$ can be identified, as a vector space, with $\Hecke_0$, while the action of $\Hecke$ is given as follows:
\begin{itemize}
\item
The action of $\Hecke_{0} \leq \Hecke$ on $\Hecke(\lambda)$ is given by left multiplication.
\item
By the Bernstein presentation, the action of $\Theta$ on $\Hecke(\lambda)$ is determined by the action of the generators $\theta_{\check{\alpha}}\in \Theta$ on $T_e\in \Hecke(\lambda)$. Let 
\[
\theta_{\check{\alpha}} \cdot T_e = q^{\inner{\lambda,\check{\fun{\alpha}}}}T_{e}.
\]
\end{itemize}
 

\subsection{Intertwining Operators} \label{subsection::itertwining}

We recall the normalized intertwining operators $N_{w}(\lambda)$, which were introduced in \Cref{subsec :: intertwining_operators} for $\lambda\in\bfX^{un}(T)$.
For a subrepresentation $\pi$ of $\Pi=\Ind_T^G(\lambda)$, these operators induce a map $N_{w}(\lambda)\res{\pi^J}$ of $\Hecke$-modules.
By \cite[Section 2]{MR2642451}, the action of $N_{\s{\alpha}}(\lambda)\res{\Pi^J}$ is given by \textbf{right}-multiplication by the following element
\[n_{s_\alpha}(\lambda)  = \frac{q-1}{q^{z+1}-1} T_{e} + \frac{q^{z}-1}{q^{z+1}-1}T_{\s{\alpha}} \in \Hecke_{0},\]
where $z=  \inner{\lambda,\check{\alpha}}$.

Suppose that $\Real(z)>-1$.
Then, $N_{\s{\alpha}}(\lambda)$ is holomorphic there.
Furthermore, considered as an element of $\operatorname{End}(\Hecke_0)$, $N_{s_\alpha}(\lambda)\res{\Pi^J}$ is a diagonalizable linear operator with two eigenvalues given by
\[
\lambda_1 =\frac{q-1}{q^{z+1} -1} + q\frac{q^{z}-1}{q^{z+1} -1}=1,\quad 
\lambda_2 =\frac{q-1}{q^{z+1} -1} - \frac{q^{z}-1}{q^{z+1} -1} = \frac{q-q^z}{q^{z+1} -1},
\]
with the exception of $z=0$, where $n_{s_\alpha}(\lambda)=T_e$ is the identity element and $N_{s_\alpha}(\lambda)=\operatorname{Id}$.

Thus, 
$N_{s_\alpha}(\lambda)\res{\Pi^J}$ has a kernel if and only if $\lambda_2=0$, which happens only if $z\in 1+\frac{2 \pi i}{log(q)} \Z$.
It follows that, for $z\in\R$, the injectivity of $N_{s_\alpha}(\lambda)\res{\Pi^J}$ does not depend on the value of $q$.

\subsection{The Submodule $\Hecke_{\para{P}}(\Omega)$}\label{subsection:: hPmoduule}

Let $\para{P}$ be a parabolic subgroup of $G$ with Levi part $M$ and let $\chi$ be an unramified character of $M$ with respect to $M \cap K$.
We denote the longest Weyl element of $\weyl{M}$ by $w_{M}^{0}$.
Let $\pi=\Ind_M^G\Omega$, with $\Omega\in\bfX(M)$ unramified, $\lambda_0=\jac{M}{T}{\Omega}$ and
\[
\Hecke_{\para{P}}(\Omega) = \pi^J .
\]

We recall that
\[
\pi=\Ind_{M}^{G}(\Omega) =  \Image \M_{w_{M}^{0}}(\Omega+\rho_{M})= \Image N_{w_{M}^{0}}(\Omega+\rho_{M}).
\]
It follows that $\Hecke_{\para{P}}(\Omega)$ is the image of $N_{w_{M}^{0}}(\Omega+\rho_{M})$ and hence, it has a basis given by
\begin{equation}
\label{eq::hPmodule_basis}
\set{ T_{u} \cdot triv \: : \:  u  \in \weyl{G}\slash \weyl{M}}, \quad \text{where} \quad triv= \sum_{w \in \weyl{M}}T_{w}.
\end{equation}


Under the equivalence of categories of \cite{Borel1976}, for any $\w \in \weyl{G}$, the operator $N_{w}(\lambda_0)\res{\pi}$ has a non-trivial kernel if and only if $N_{w}(\lambda_0)\res{\pi^{J}}$ does.
This, in turn, can be determined by calculating the rank of the matrix $\Lambda$
of $N_{w}(\lambda_0)\res{\pi^{J}}$ with respect to the basis of $\Hecke_{\para{P}}(\Omega)$ given in \Cref{eq::hPmodule_basis} and the basis of $\Hecke\bk{w\cdot\lambda_0}$ given by $\set{T_{\w} \: : \: \w \in \weyl{G}}$.

On the other hand, by the equivalence of categories described in \cite[Section 4]{MR2869018}, it holds that for any $w\in W^{M,T}$, the injectivity of $N_w\bk{\lambda_0}\res{\pi^J}$ does not depend on the value of $q$.
Thus, in order to determine whether $N_{w}(\lambda_0)\res{\pi^J}$ is injective for any value of $q$, it is enough to check it for a particular prime power $q$.
In the realization of the calculation described in \Cref{App:: Complixity}, we used the value $q=2$.

\subsection{Computing the Dimension of Kernels}\label{App:: Complixity}
We conclude this section by outlining the calculation of  $\dim_{\C} \bk{\Ker N_{w}(\lambda_0)\res{\pi^{J}}}$ required in the proof of \Cref{Prop::E7::iwahori}.
In preforming this calculation we had three limitations: computational speed, available \textit{RAM} (Random Access Memory) and hard drive space. 

In order to minimize computational time, all steps in the calculation were broken down to smaller steps which were calculated in parallel on a number of processors. This, in turn, resulted in a higher  \textit{RAM} usage.
Naively, finding the rank of the operator requires holding a matrix with $m$ columns , where $m =2,903,040$. However, such a matrix 
requires more  \textit{RAM}  than was available to us. 
We now explain how the calculation of the rank was organized so as to be completed in a reasonable amount of time with the resources available to us.

Fix a maximal parabolic subgroup $P$ with a Levi subgroup $M$. Let $\Omega \in \bfX(M)$ such that $\Omega =\Real(\Omega)$. In other words, $\Omega$ is unramified. Let $\pi =\Ind_{M}^{G}(\Omega)$ and $\lambda_0=\jac{M}{T}{\Omega}$.

Given $w \in \weyl{G}$, our goal is to determine whether the normalized intertwining operator, $N_{w}(\lambda_0)\res{\pi}$ has a kernel. This is equivalent to determine the codimension of the row space of $\Lambda$.
Since $\Hecke_0$ is of finite dimension, this is a problem in finite-dimensional linear algebra.




It is convenient to calculate the matrix $\Lambda$ using the element $n_{w}(\lambda_0)$.
The rows of the matrix are given by $v_{u} =  T_{u} \cdot triv \cdot n_{w}(\lambda_0)$ for $u \in W^{M,T}$.
In particular,
\[
\operatorname{rank}(\Lambda) = \dim \Span_{\C} \set{v_{u} \mvert u \in W^{M,T}}.
\]
Due to RAM limitations, this cannot be done in a straight-forward way and needs to be done in parts.
In order to generate the rows of $\Lambda$ we start by separately calculating
\[
v_{u,u',w}= T_{u}\cdot T_{u'} \cdot n_{w} \quad \forall u \in W^{M,T}, u' \in \weyl{M}
\]
and saving each one to the hard-drive.
It is then possible sum the elements
\[
v_{u} = \sum_{u' \in \weyl{M}} v_{u,u',w} \forall u \in W^{M,T} 
\]
and write each to the hard-drive.

While it is possible to calculate the coordinate vector of $v_{u}$ for each $u \in W^{M,T}$ separately, we were not able to load all of them at once and generate $\Lambda$, again due to RAM limitations.
However, we were able, by writing the coordinates into text files, to write the transposed matrix $\Lambda^T$ into a text file.
While $\operatorname{rank}(\Lambda)=\operatorname{rank}(\Lambda^T)$, it is simpler to compute the latter.
The idea is that $\Lambda^T$ is a matrix of dimension $|\weyl{G}| \times |W^{M,T}|$, instead of $|W^{M,T}| \times |\weyl{G}|$ (note the values of $|W^{M,T}|$ and $|\weyl{G}|$ given in \Cref{section::structure}).
It is then possible, to break $\Lambda^T$ into smaller blocks and perform the Gauss elimination process on each separately, then to combine the resulting non-zero rows to a new matrix and repeat the process until we are left with one matrix whose rows are linearly independent.
The rank of the resulting matrix equals $\dim_{\C} \bk{\Image N_{w}(\lambda_0)\res{\pi^{J}}}$, the co-dimension of the kernel.

We close by collecting the relevant data for the proof of \Cref{Prop::E7::iwahori}:
\begin{longtable}[H]{|c|c|c|}
	\hline
	& $\Ind_{M_2}^{G}(\Omega_{M_{2},-1})$  & $\Ind_{M_4}^{G}(\Omega_{M_{4},0})$    \\ \hline
	$w$
	&
	$\s{\alpha_7}\s{\alpha_6}\s{\alpha_5}\s{\alpha_4}\s{\alpha_2}$&
	$\s{\alpha_7}\s{\alpha_6}\s{\alpha_5}\s{\alpha_4}\s{\alpha_3}\s{\alpha_2}\s{\alpha_4}$ 
	\\ \hline 
	$\dim \Hecke_{\para{P}}(\Omega)$ & 576 & 10,080 \\
	\hline
	$\dim_{\C} \bk{\Image N_{w}(\lambda_0)\res{\pi^{J}}}$ & 561 & 10,080 \\ \hline
	$\dim_{\C} \bk{\Ker N_{w}(\lambda_0)\res{\pi^{J}}}$ & 15 & 0 \\ \hline
	\caption{Dimensions of kernels in \Cref{Prop::E7::iwahori} }
	\label{tab::Iwahori}
\end{longtable}

In particular, $N_{w}(\lambda_0)\res{\pi^{J}}$ is not injective in the case of $\pi=\Ind_{M_2}^{G}(\Omega_{M_{2},-1})$ and is injective in the case of $\pi=\Ind_{M_4}^{G}(\Omega_{M_{4},0})$.

\end{appendices}

\bibliographystyle{plain}
\bibliography{bib}
\end{document}